\newtheorem{thm}{Theorem}[section]
\newtheorem{pro}[thm]{Proposition}
\newtheorem{lem}[thm]{Lemma}
\newtheorem{cor}[thm]{Corollary}
\theoremstyle{definition}
\newtheorem{exam$s$-union ple}[thm]{Example}
\newtheorem{rmk}[thm]{Remark}
\begin{document}
\date{}

\title{On cover-free families of finite vector spaces\footnote{Supported by  National Natural Science Foundation of China (12171028, 12371326).}}

\author{
 {\small Yunjing  Shan,}  {\small Junling  Zhou}\\
{\small School of Mathematics and Statistics}\\ {\small Beijing Jiaotong University}\\
  {\small Beijing  100044, China}\\
 {\small jlzhou@bjtu.edu.cn}\\
}

\maketitle

\begin{abstract}
  There is a large literature on cover-free families of finite sets, because of their many applications in combinatorial group testing, cryptographic and communications. This work studies the generalization of cover-free families from sets to finite vector spaces. Let $V$ be an $n$-dimensional vector space over the finite field $\mathbb{F}_{q}$ and let $\left[V\atop k\right]_q$ denote the family of all $k$-dimensional subspaces of $V$. A family $\mathcal{F}\subseteq \left[V\atop k\right]_q$ is called cover-free if there are no three distinct subspaces $F_{0}, F_{1}, F_{2}\in \mathcal{F}$ such that $F_{0}\leq (F_{0}\cap F_{1})+(F_{0}\cap F_{2})$.  A family $\mathcal{H}\subseteq \left[V\atop k\right]_q$ is called a $q$-Steiner system $S_{q}(t, k, n)$ if for every $T\in \left[V\atop t\right]_q$, there is exactly one $H\in \mathcal{H}$ such that $T\leq H$. In this paper we investigate cover-free families in the vector space $V$. Firstly, we determine the maximum size of a cover-free family in $\left[V\atop k\right]_q$. Secondly, we characterize the structures of all maximum cover-free families which are closely related to $q$-Steiner systems.
\end{abstract}

{\bf  Key words}\  \ \  cover-free family \ \  $q$-Steiner system  \ \  intersecting family \ \  shadow \ \ vector space \ \

\section{Introduction}

{\it Cover-free} families of finite sets were considered from different subjects such as extremal set theory, information theory, combinatorial designs, codes and group testing \cite{e, group, f, s}. Cover-free families were also considered for many cryptographic
problems such as frameproof codes and traceability schemes \cite{a1, a2, fra1, fra}, key storage \cite{b1, b2} and multi-receiver authentication \cite{c3}.

Let $X$ be an $n$-element set and let $\tbinom{X}{k}$ denote the set of all $k$-element subsets of $X$. %A family $\mathcal{F}$ is called {\it intersecting} if for all $F$, $F'\in\mathcal{F}$, we have $\big|F\cap F'\big|\geq 1$.
A family $\mathcal{F}\subseteq \tbinom{X}{k}$ is called {\it cover-free} if there are no three distinct sets $F_{0}, F_{1}, F_{2}\in \mathcal{F}$ such that $F_{0}\subseteq F_{1}\cup F_{2}$. It is obvious that $\mathcal{F}=\{X\}$ is the maximum cover-free family if $n=k$; $\mathcal{F}=\{F, F'\}$ is the maximum cover-free family where $F\cup F'=X$ and $F, F'\in\tbinom{X}{k}$ if $n=k+1$. A family $\mathcal{H} \subseteq \tbinom{X}{k}$ is called a {\it Steiner system} $S(t, k, n)$ on $X$ if for every $T\in \tbinom{X}{t}$, there is exactly one $H\in \mathcal{H}$ such that $T\subseteq H$. Obviously, we have $|\mathcal{H}|=\frac{\tbinom{n}{t}}{\tbinom{k}{t}}$.

Erd\H{o}s, Frankl and F\"{u}redi \cite{two} discussed cover-free  families of sets in detail and established the following theorem.

\begin{thm}\label{sc}{\rm(\cite[Theorem 1]{two})}
Let $k, t$ and $n$ be positive integers with $n>$$k+1$. Suppose $\mathcal{F}\subseteq \tbinom{X}{k}$ is a cover-free  family. Then the following hold.
\begin{itemize}
\item[\rm(i)] When $k=2t,$
\begin{equation*}
|\mathcal{F}| \leq \frac{\tbinom{n-1}{t}}{\tbinom{2t-1}{t}}.
\end{equation*}
Moreover, equality holds if and only if $\mathcal{F}=\{\{x\}\cup S: S\in \mathcal{S}\}$, where $x\in X$, $\mathcal{S}$ is a Steiner system $S(t, 2t-1, n-1)$ on $X\backslash \{x\}$.

\item[\rm(ii)] When $k=2t-1,$
\begin{equation*}
|\mathcal{F}| \leq \frac{\tbinom{n}{t}}{\tbinom{2t-1}{t}}.
\end{equation*}
Moreover, equality holds if and only if $\mathcal{F}$ is a Steiner system $S(t, 2t-1, n)$ on $X$.

\end{itemize}
\end{thm}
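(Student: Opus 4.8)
The plan is to reduce everything to an intersecting-family reformulation of the cover-free condition, to prove the extremal bound as a weighted count of $t$-subsets, and then to peel off a common point in order to pass from the case $k=2t$ to the case $k=2t-1$.

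First I would record the reformulation that drives the whole argument. For three distinct $k$-sets one has $F_0\subseteq F_1\cup F_2$ if and only if $F_0=(F_0\cap F_1)\cup(F_0\cap F_2)$, i.e.\ if and only if $(F_0\setminus F_1)\cap(F_0\setminus F_2)=\emptyset$. Hence $\mathcal F$ is cover-free precisely when, for every $F_0\in\mathcal F$, the \emph{traces} $\{F_0\setminus F:\ F\in\mathcal F\setminus\{F_0\}\}$ form an intersecting family of nonempty subsets of the $k$-set $F_0$. This is the bridge to the ``intersecting family'' tool and is what I would use repeatedly. Next I would dispose of the lower bounds by checking that the claimed extremal families are cover-free. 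If $\mathcal H=S(t,2t-1,n)$ and $F_0\subseteq F_1\cup F_2$, then $|F_0\cap F_1|+|F_0\cap F_2|\geq|F_0|=2t-1$, so some intersection has size $\geq t$; that common $t$-set lies in two blocks, contradicting the Steiner property. Counting $(t\text{-set},\text{block})$ incidences gives $|\mathcal H|=\binom{n}{t}/\binom{2t-1}{t}$, matching (ii); the family $\{\{x\}\cup S:\ S\in\mathcal S\}$ in (i) is handled by the same intersection-size computation (the point $x$ contributes one element to every pairwise intersection), and descends to $\mathcal S=S(t,2t-1,n-1)$.

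The real work, and what I expect to be the main obstacle, is the upper bound for $k=2t-1$, which is equivalent to the inequality $\sum_{T}d(T)\leq\binom{n}{t}$, where $d(T)=|\{F\in\mathcal F:\ T\subseteq F\}|$ for $T\in\binom{X}{t}$, since $\sum_T d(T)=|\mathcal F|\binom{2t-1}{t}$. The subtlety is that the naive ``one private $t$-set per member'' injection yields only $|\mathcal F|\leq\binom{n}{t}$, losing the whole factor $\binom{2t-1}{t}$: two members may legitimately share a $t$-set under the cover-free hypothesis, so pairwise information is not enough and the global, triple-wise condition must be used. Concretely I would try to show that the total over-covering $\sum_T\bigl(d(T)-1\bigr)_{+}$ is dominated by the number of uncovered $t$-sets $|\{T:\ d(T)=0\}|$, which is exactly what forces $\sum_T d(T)\leq\binom{n}{t}$. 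The mechanism should be that whenever two members $F,F'$ meet in a $t$-set, the cover-free condition forbids a controlled collection of further potential members and thereby manufactures ``wasted'' (uncovered) $t$-sets; turning this into an actual injection or an LYM-type inequality, organised through the intersecting traces and a shadow (Kruskal--Katona type) estimate, is the delicate core, and a direct one-to-one map does not survive the simplest examples, so some averaging will likely be needed.

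Finally, the case $k=2t$ I would reduce to $k=2t-1$: the trace structure at the extremal size should force a common point $x$ in every member, after which removing $x$ yields a $(2t-1)$-uniform family on $X\setminus\{x\}$ that is still cover-free (a cover downstairs lifts to a cover upstairs because $x$ lies in every set), so part (ii) applies with $n$ replaced by $n-1$. For the equality characterisations I would run the upper-bound inequality backwards: tightness in $\sum_T d(T)\leq\binom{n}{t}$ must force $d(T)\in\{0,1\}$ with no waste, hence $d(T)=1$ for every $T$, which is exactly the definition of $S(t,2t-1,n)$ in (ii), and in (i) the same tightness additionally pins down the point $x$ and identifies $\mathcal S$ with $S(t,2t-1,n-1)$. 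The hardest and most error-prone step throughout is the compensating inequality in the third paragraph, precisely because it is the one place where the global cover-free hypothesis must be converted into a statement about single $t$-subsets.
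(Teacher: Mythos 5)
Your proposal has two genuine gaps, and they interact. First, your direction of reduction between the two parities is reversed from the one that actually works. The paper (following Erd\H{o}s--Frankl--F\"uredi, whose argument it adapts to vector spaces in Section 3) proves the even case $k=2t$ first, by a counting argument valid for \emph{every} cover-free family, and then derives the odd case by adjoining a fresh point $x$ to every member: $\mathcal F'=\{\{x\}\cup F\}$ is a $2t$-uniform cover-free family on $n+1$ points, so the even bound $\tbinom{(n+1)-1}{t}/\tbinom{2t-1}{t}$ gives exactly the odd bound (this is Proposition 2.2 and Remark 1.4 in the paper). You propose the opposite: prove $k=2t-1$ directly and get $k=2t$ by locating a common point $x$ in all members and deleting it. But a common point exists only in the extremal configuration, and establishing its existence is itself the hardest part of the equality analysis (in the paper it occupies most of Subsection 3.2, via shadow estimates); for a non-extremal $2t$-uniform family you have no reduction at all, so the upper bound in case (i) is simply not proved by your plan.

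Second, the core inequality $\sum_T\bigl(d(T)-1\bigr)_{+}\le|\{T:d(T)=0\}|$, which you correctly identify as equivalent to the odd-case bound, is left unproven --- you say a direct injection fails and ``some averaging will likely be needed,'' which is precisely where the theorem lives. Worse, the intersecting-trace machinery you set up in your first paragraph gives no leverage in the odd case: any two $t$-subsets of a $(2t-1)$-set already intersect, so the Erd\H{o}s--Ko--Rado theorem says nothing there. The reason the even case is the right primary target is that for $F\in\tbinom{X}{2t}$ the non-private $t$-subsets of $F$ form a genuinely intersecting family (two non-private $t$-subsets cannot partition $F$, by the analogue of the paper's Lemma 3.1), so EKR bounds their number by $\tbinom{2t-1}{t-1}$ and each member owns at least $\tbinom{2t-1}{t}$ private $t$-sets. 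Even that only yields $|\mathcal F|\le\tbinom{n}{t}/\tbinom{2t-1}{t}$; squeezing out the stronger $\tbinom{n-1}{t}$ numerator requires the additional weighting via the auxiliary families $\mathcal A(F,T)$ (Lemmas 3.3--3.6 in the paper), charging weight to $t$-sets near members through high-multiplicity $t$-sets. None of this compensating mechanism appears in your proposal, so as written it establishes neither inequality. Your verification that Steiner systems are cover-free, and your observation that tightness forces $d(T)\in\{0,1\}$ hence a Steiner system in case (ii), are both correct and match the paper.
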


%Bounds for an $r$-cover-free family without block size restriction have been studied by several researchers. Sperner's theorem in \cite{sper} states that for a $1$-cover-free family $\mathcal{F}$,
%\begin{equation*}
%|\mathcal{F}| \leq \tbinom{n}{\lfloor\frac{n}{2}\rfloor}
%\end{equation*}
%for all positive integers $n\geq2$. This bound is tight.

The problems in extremal set theory have natural extensions to families of subspaces over a finite field. This is the first work to generalize cover-free families from finite sets to finite vector spaces. Throughout the paper we always let $V$ be an $n$-dimensional vector space over the finite field $\mathbb{F}_{q}$. Let $\left[V\atop k\right]_q$ denote the family of all $k$-dimensional subspaces of $V$. For $m\in \mathbb{R}, k\in \mathbb{Z}^{+}$, define the Gaussian binomial coefficient by
$$\left[m\atop k\right]_{q}:=\prod\limits_{i=0}^{k-1}\dfrac{q^{m-i}-1}{q^{k-i}-1}.$$
Obviously, the size of $\left[V\atop k\right]_q$ is $\left[n\atop k\right]_{q}$. If $k$ and $q$ are fixed, then $\left[m\atop k\right]_{q}$ is a continuous function of $m$ which is positive and strictly increasing when $m\geq k$. If there is no ambiguity, the subscript $q$ can be omitted.

We denote $A\leq B$ if $A$ is a subspace of $B$. For any two subspaces $A$, $B\leq V$, let $A+B$ denote the linear span of $A\cup B$ and let $A\oplus B$ denote the direct sum of $A$ and $B$ with $A\cap B=\{\textup{\textbf{0}}\} $. In the absence of parentheses, we default to perform the $``\bigcap"$ operation first and then the $``+"$ operation. Let $\mathcal{F}\subseteq \left[V\atop k\right]$ be a family of subspaces, we say that $\mathcal{F}$ is {\it intersecting} if for all $F$, $F'\in\mathcal{F}$, we have ${\rm dim}$$(F\cap F')\geq 1$. A family $\mathcal{H} \subseteq \left[V\atop k\right]$ is called a {\it $q$-Steiner system} $S_{q}(t, k, n)$ on $V$ if for every $T\in \left[V\atop t\right]$, there is exactly one $H\in \mathcal{H}$ such that $T\leq H$. Obviously, we have $|\mathcal{H}|=\frac{\left[n\atop t\right]}{\left[k\atop t\right]}$.

While many results about sets have been generalized to vector spaces, not so much is known about their $q$-analogs because adapting combinatorial techniques to vector spaces can be challenging. In this paper, we will extend Theorem~\ref{sc} to vector spaces. We begin with the definition of a $q$-analog of a cover-free family.

Let $F_{0}, F_{1}$ and $F_{2}$ be three distinct subsets in a set $X$. Observe that $F_{0}\subseteq F_{1}\cup F_{2}$ is equivalent to $F_{0}\subseteq (F_{0}\cap F_{1})\cup (F_{0}\cap F_{2})$. So we define a family $\mathcal{F}\subseteq \left[V\atop k\right]$ to be {\it cover-free} if there are no three distinct subspaces $F_{0}, F_{1}, F_{2}\in \mathcal{F}$ such that $F_{0}\leq F_{0}\cap F_{1}+F_{0}\cap F_{2}$. It is obvious that if $n=k$, then $|\mathcal{F}|\leq 1$. We only need to consider $n\geq k+1$. Our main result is as follows.

%. When $n=k+1$, since for any distinct $F_{1}, F_{2}\in \left[V\atop k\right]$, ${\rm dim}(F_{1}+F_{2})\geq k+1=n$, we have $F_{1}+F_{2}=V$. By the definition of cover-free  family, $|\mathcal{F}|\leq 2$, and equality holds if and only if $\mathcal{F}=\{F_{1}, F_{2}\}$.

\begin{thm}\label{vsc}
Let $k, t$ and $n$ be positive integers with $n\geq k+1$. Suppose $\mathcal{F}\subseteq \left[V\atop k\right]$ is a cover-free family. Then the following hold.
\begin{itemize}
\item[\rm(i)] When $k=2t,$
\begin{equation*}
|\mathcal{F}| \leq \frac{\left[n-1\atop t\right]}{\left[2t-1\atop t\right]}.
\end{equation*}
Moreover, equality holds if and only if $\mathcal{F}=\{E\oplus S: S\in \mathcal{S}\}$, where $E\in \left[V\atop 1\right]$ and $\mathcal{S}$ is a $q$-Steiner system $S_{q}(t, 2t-1, n-1)$ on some $W\in \left[V\atop n-1\right]$ satisfying $E\oplus W=V$.
\item[\rm(ii)] When $k=2t-1,$
\begin{equation*}
|\mathcal{F}| \leq \frac{\left[n\atop t\right]}{\left[2t-1\atop t\right]}.
\end{equation*}
Moreover, equality holds if and only if $\mathcal{F}$ is a $q$-Steiner system $S_{q}(t, 2t-1, n)$ on $V$.
\end{itemize}
\end{thm}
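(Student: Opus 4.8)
The plan is to treat both parities with a single dimension-counting engine, build the bound by double counting, and then analyse the tight case. First I would record that $F_0\le (F_0\cap F_1)+(F_0\cap F_2)$ is equivalent to the equality $F_0 = (F_0\cap F_1)+(F_0\cap F_2)$, since the right-hand side is always a subspace of $F_0$. Writing $a=\dim(F_0\cap F_1)$, $b=\dim(F_0\cap F_2)$ and $c=\dim(F_0\cap F_1\cap F_2)$, the modular law gives $k=a+b-c$, whence $a+b\ge k$ and so $\max(a,b)\ge\lceil k/2\rceil=t$ in both cases $k=2t-1$ and $k=2t$. Thus any cover-free violation forces two members to meet in dimension at least $t$; contrapositively, a family in which no two members share a $t$-dimensional subspace (a $t$-packing) is automatically cover-free. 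This already gives the easy sufficiency direction: a $q$-Steiner system $S_q(t,2t-1,n)$ is cover-free, and, via the identity $(E\oplus S)\cap(E\oplus S')=E\oplus(S\cap S')$ valid when $E\oplus W=V$ and $S,S'\le W$, every family $\{E\oplus S:\ S\in\mathcal S\}$ of the stated form is cover-free as well (it reduces to $\mathcal S$ being a packing).

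For the bound when $k=2t-1$ I would double count incidences between members and the $t$-subspaces they contain. Each $F\in\mathcal F$ contains exactly $\left[2t-1\atop t\right]$ subspaces of dimension $t$, so setting $d(T):=\#\{F\in\mathcal F:\ T\le F\}$ we obtain $\sum_{T\in\left[V\atop t\right]}d(T)=|\mathcal F|\,\left[2t-1\atop t\right]$. The assertion $|\mathcal F|\le\left[n\atop t\right]/\left[2t-1\atop t\right]$ is then exactly $\sum_T d(T)\le\left[n\atop t\right]$, i.e. on average each $t$-subspace is covered at most once. The real content is to show that every $t$-subspace covered with multiplicity at least two is balanced by $t$-subspaces that stay uncovered. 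Here the engine re-enters: if $T$ lies in two blocks $F_1=T\oplus A_1$ and $F_2=T\oplus A_2$, the cover-free condition forbids any further block from lying inside $F_1+F_2$ in the offending way, and I would exploit this — organised through the $t$-shadow of the relevant spans and an intersecting-family estimate — to charge the excess multiplicities injectively to uncovered $t$-subspaces. This charging step is, I expect, the main obstacle of the whole argument.

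For $k=2t$ the plan is to reduce to the odd case by quotienting out a line. Fixing a line $E\le V$ and setting $\mathcal F_E=\{F\in\mathcal F:\ E\le F\}$, passing to $V/E$ sends $\mathcal F_E$ to a family of $(2t-1)$-dimensional subspaces of an $(n-1)$-dimensional space; since $\overline{F_i}\cap\overline{F_j}=(F_i\cap F_j)/E$ whenever $E$ lies in both, the image is again cover-free, so part~(ii) gives $|\mathcal F_E|\le\left[n-1\atop t\right]/\left[2t-1\atop t\right]$ for every $E$. The remaining, and hardest, task is to show that at the extremal size all members pass through a single common line $E$, so that $\mathcal F=\mathcal F_E$; this is an Erdős–Ko–Rado/star-type statement, which I would attack by combining the dimension engine (large $|\mathcal F|$ forces large pairwise intersections) with a sunflower/$\Delta$-system or shadow argument to extract the common core.

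Finally, for equality I would analyse the double count in the tight case. In (ii), $\sum_T d(T)=\left[n\atop t\right]$ together with strictness of the charging (for $n>k+1$) forces every $t$-subspace to be covered exactly once, so $\mathcal F$ is a $q$-Steiner system $S_q(t,2t-1,n)$. In (i), the common line $E$ and the identity $(E\oplus S)\cap(E\oplus S')=E\oplus(S\cap S')$ transport tightness to the quotient; identifying $V/E$ with a complement $W$, $E\oplus W=V$, equality in the odd case forces the induced family $\mathcal S$ to be a $q$-Steiner system $S_q(t,2t-1,n-1)$ on $W$, yielding precisely the stated structure. Throughout, the Gaussian-coefficient identities and the sufficiency directions are routine; the genuine difficulties are the compensation/charging estimate behind the odd bound and the extraction of the common line in the even case.
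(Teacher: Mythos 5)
Your outline gets the easy direction right (the dimension count $k=a+b-c$ showing any violation forces some $\dim(F_i\cap F_j)\ge t$, hence $t$-packings and in particular $q$-Steiner systems are cover-free; this matches the paper's Proposition~2.2), but both load-bearing steps of the theorem are absent. For the odd-case bound you reduce everything to ``charge the excess multiplicities injectively to uncovered $t$-subspaces'' and then concede that this charging is ``the main obstacle of the whole argument''; that concession is accurate, and nothing in the proposal supplies the mechanism. In the paper the analogous step occupies Lemmas~3.3--3.7: one introduces, for each incident pair $(F,T)$, an auxiliary family $\mathcal{A}(F,T)$ of $t$-subspaces meeting $F$ in dimension $t-1$, proves upper bounds on how often a given $A$ can occur (Lemmas~3.4, 3.5) and a lower bound on $\sum_{F\in\mathcal{F}_T}|\mathcal{A}(F,T)|$ (Lemma~3.6), and packages this into a weight function $\omega_{(F,T)}$ whose row sums are at least $\frac{q^t-1}{q^{n-t}-1}$ and whose column sums are at most $1$. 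Without that (or an equivalent) construction there is no proof of either bound.

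The second gap is structural: your direction of reduction between the two parities does not work. Knowing $|\mathcal{F}_E|\le \left[n-1\atop t\right]/\left[2t-1\atop t\right]$ for every line $E$ bounds only the stars of $\mathcal{F}$, and yields nothing about $|\mathcal{F}|$ unless you already know that all members of $\mathcal{F}$ pass through one common line. That common-line statement is not an add-on for the equality analysis --- in your plan it is needed to establish the even-case upper bound at all --- and it is in fact the hardest result in the paper (Lemmas~3.8--3.15 together with the final shadow argument via Theorem~2.2 and Lemma~2.7), proved there only \emph{after} the bound is in hand, from the forced equalities in the weight-function count. So your scheme is circular. The paper reduces in the opposite direction, which is the one that actually closes: it proves the $k=2t$ bound directly (key point: $N_t(F)$ is an intersecting family inside the $2t$-dimensional space $F$, so the Erd\H{o}s--Ko--Rado theorem for vector spaces bounds the non-private $t$-subspaces of each block), and then deduces the $k=2t-1$ case because $F\mapsto E\oplus F$ always turns a cover-free $(2t-1)$-family into a cover-free $2t$-family in one more ambient dimension --- no common-line lemma is needed in that direction. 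As written, the proposal is a plausible plan whose two essential steps are missing, not a proof.
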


We may begin with a verification of the relationship between cover-free families of subspaces with odd dimension and those with even dimension. Meanwhile, we need to show that the structures defined by $q$-Steiner systems in Theorem~\ref{vsc} are indeed cover-free families. To assist with these tasks, we require some elementary results in vector spaces.

\begin{lem}\label{vv}
\begin{itemize}
   \item[{\rm(i)}] Suppose that $T\in \left[V\atop t\right]$, $R\in \left[V\atop r\right]$ and~$T\cap R=\{\textup{\textbf{0}}\}$. If  $T\leq F\leq V$, then $F\cap (T\oplus R)=T\oplus R\cap F$.
   \item[{\rm(ii)}] Suppose that  $T\in \left[V\atop t\right]$, $P\in \left[V\atop p\right]$ and $T\cap P=\{\textup{\textbf{0}}\}$. If $R\in \left[P\atop r\right]$ and $S\in \left[P\atop s\right]$, then $(T\oplus R)\cap (T\oplus S)=T\oplus R\cap S$.
\end{itemize}
\end{lem}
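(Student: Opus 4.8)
The plan is to prove both parts by direct double-inclusion arguments, since each is essentially the modular (Dedekind) law for the subspace lattice of $V$, specialised to a situation where one of the sums is direct. Throughout I will keep in mind the stated convention that $\cap$ binds before $+$, so that ``$T\oplus R\cap F$'' means $T\oplus(R\cap F)$, and I will check the directness of each sum before writing $\oplus$.

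For part (i), I would first note that the right-hand side is well-defined: since $R\cap F\leq R$ and $T\cap R=\{\mathbf{0}\}$, we get $T\cap(R\cap F)\leq T\cap R=\{\mathbf{0}\}$, so $T\oplus(R\cap F)$ is genuinely a direct sum. The inclusion $T\oplus(R\cap F)\leq F\cap(T\oplus R)$ is then immediate: $T\leq F$ together with $T\leq T\oplus R$ gives $T\leq F\cap(T\oplus R)$, while $R\cap F\leq F$ and $R\cap F\leq R\leq T\oplus R$ give $R\cap F\leq F\cap(T\oplus R)$. For the reverse inclusion I would take an arbitrary $v\in F\cap(T\oplus R)$ and write $v=s+w$ with $s\in T$ and $w\in R$; because $T\leq F$ we have $s\in F$, hence $w=v-s\in F$, so $w\in R\cap F$ and therefore $v\in T\oplus(R\cap F)$. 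This single step, where $T\leq F$ is used to pull $w$ back into $F$, is the whole content of part (i).

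For part (ii) I would deduce the claim from part (i) rather than re-proving it from scratch. Since $R,S\leq P$ and $T\cap P=\{\mathbf{0}\}$, every relevant sum ($T\oplus R$, $T\oplus S$, $T\oplus(R\cap S)$) is direct. Setting $F=T\oplus S$, so that $T\leq F$ and $T\cap R=\{\mathbf{0}\}$, part (i) yields $(T\oplus S)\cap(T\oplus R)=T\oplus\bigl(R\cap(T\oplus S)\bigr)$. It then remains to identify $R\cap(T\oplus S)$ with $R\cap S$: the inclusion $R\cap S\leq R\cap(T\oplus S)$ is clear, and conversely if $v\in R\cap(T\oplus S)$ then $v\in P$ and $v=t+s$ with $t\in T$, $s\in S\leq P$, whence $t=v-s\in P\cap T=\{\mathbf{0}\}$ and $v=s\in R\cap S$. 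Substituting this equality finishes part (ii).

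I do not expect a genuine obstacle in either part; both are elementary once the hypotheses are invoked at the right moment. The crux in each case is the one place where a hypothesis forces a vector into a smaller subspace: $T\leq F$ in part (i), and $T\cap P=\{\mathbf{0}\}$ in part (ii). As a sanity check I might also record the fully self-contained proof of (ii): for $v$ in the intersection, writing $v=t_1+x=t_2+y$ with $t_1,t_2\in T$, $x\in R$, $y\in S$ gives $t_1-t_2=y-x\in T\cap P=\{\mathbf{0}\}$, so $x=y\in R\cap S$ and $v\in T\oplus(R\cap S)$. The only care needed is notational: consistently reading $\oplus$ as direct and respecting the $\cap$-before-$+$ convention.
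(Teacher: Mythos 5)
Your proof is correct and follows the same overall structure as the paper's: one inclusion is noted to be immediate in each part, and part (ii) is reduced to part (i) by taking $F=T\oplus S$ together with the identity $R\cap(T\oplus S)=R\cap S$. The only difference is that where the paper closes each equality with a dimension count, you exhibit the reverse inclusion by element-chasing, which is a marginally more elementary way to finish the same argument.
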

\begin{proof}~
With the assumption of (i), it is obvious that $T\oplus (R\cap F)\leq F\cap (T\oplus R)$. Let ${\rm \dim}(F)=k$. Since
\begin{equation*}
\begin{array}{rl}
{\dim}(F\cap (T\oplus R))\!\!\!&=k+(r+t)-{\dim}(F+R)\\[.3cm]
&=k+(r+t)-(k+r-{\dim}(R\cap F))\\[.3cm]
&=t+{\dim}(R\cap F)\\[.3cm]
&={\dim}(T\oplus R\cap F),
\end{array}
\end{equation*}
this proves (i).

Since $R\cap S\leq R\cap (T\oplus S)$ and
\begin{equation*}
\begin{array}{rl}
{\dim}(R\cap (T\oplus S))\!\!\!&=r+(t+s)-{\dim}(T+R+S)\\[.3cm]
&=r+(t+s)-(t+{\dim}(R+S)-0)\\[.3cm]
&={\dim}(R\cap S),
\end{array}
\end{equation*}
we have $R\cap (T\oplus S)=R\cap S$. Then using the condition of (ii) and taking $F=T\oplus S$ in (i) gives $(T\oplus R)\cap (T\oplus S)=T\oplus R\cap (T\oplus S)=T\oplus R\cap S$.
\end{proof}
%Note that a $q$-Steiner system $S_{q}(t, 2t-1, n)$ is a cover-free family. Since ${\dim}(A\cap A')\leq t-1$ holds for all $A, A'\in S_{q}(t, 2t-1, n)$, we have ${\dim}(S\cap(A+ A'))\leq 2t-2<{\dim}(S)$ that is, $S\nleq A+A'$ for any $S\in \left(S_{q}(t, 2t-1, n)\backslash\{A, A'\}\right)$. Similarly, $\mathcal{F}=\{E\oplus S: S\in S_{q}(t, 2t-1, n-1)\}$ is also a cover-free family, where $E\in \left[V\atop 1\right]$, the $q$-Steiner system $S_{q}(t, 2t-1, n-1)$ is on $W$, $W \in \left[V\atop n-1\right]$ and $E\oplus W=V$.
\begin{pro}\label{p22}
For $\mathcal{F}\subseteq \left[V\atop k\right]$, define
\begin{equation}\label{a3}
\mathcal{F}'=\{E\oplus F: F\in \mathcal{F}\},
\end{equation}
where $E$ is a $1$-dimensional vector space outside of $\left[V\atop 1\right]$. Then the following hold.
\begin{itemize}
\item[\rm(i)] If $\mathcal{F}$ is a cover-free family in $\left[V\atop k\right]$, then $\mathcal{F}'$ is a cover-free family in $\left[E\oplus V\atop k+1\right]$.
\item[\rm(ii)] If $\mathcal{F}$ is a $q$-Steiner system $S_{q}(t, 2t-1, n)$, then $\mathcal{F}$ is a cover-free family in $\left[V\atop 2t-1\right]$ and $\mathcal{F}'$ is a cover-free family in $\left[E\oplus V\atop 2t\right]$.
%\item[\rm(iii)] If $\mathcal{H}$ is a $q$-Steiner system $S_{q}(t, 2t-1, n-1)$ on $W$, then $\{E\oplus H:H\in \mathcal{H}\}$ is a cover-free family on $V$, where $E\in \left[V\atop 1\right]$, $W\in \left[V\atop n-1\right]$ and $E\oplus W=V$.
\end{itemize}
\end{pro}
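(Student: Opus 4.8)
The plan is to treat the two parts in turn, deriving the second assertion of part~(ii) from part~(i), so the real content is a computation of pairwise intersections plus two dimension counts.

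For part~(i) I would argue by contrapositive: assume $\mathcal{F}'$ is \emph{not} cover-free, so there are three distinct members $G_0,G_1,G_2\in\mathcal{F}'$ with $G_0\leq (G_0\cap G_1)+(G_0\cap G_2)$. Write $G_i=E\oplus F_i$ with $F_i\in\mathcal{F}$; since the $G_i$ are distinct and $F_i=F_j$ would force $G_i=G_j$, the chosen representatives $F_0,F_1,F_2$ are distinct. The key step is to compute the two relevant intersections by applying Lemma~\ref{vv}(ii) inside the ambient space $E\oplus V$, with $E$ in the role of $T$ (a line with $E\cap V=\{\textup{\textbf{0}}\}$), $P=V$, and $R=F_0$, $S=F_i$: this gives $G_0\cap G_1=E\oplus(F_0\cap F_1)$ and $G_0\cap G_2=E\oplus(F_0\cap F_2)$. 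Adding, $(G_0\cap G_1)+(G_0\cap G_2)=E\oplus\big((F_0\cap F_1)+(F_0\cap F_2)\big)$. Setting $M=(F_0\cap F_1)+(F_0\cap F_2)\leq F_0$, the hypothesis reads $E\oplus F_0\leq E\oplus M$; intersecting with $V$ (or comparing dimensions, since $\dim(E\oplus F_0)=1+k$ and $\dim(E\oplus M)=1+\dim M$ force $\dim M\geq k$ and hence $M=F_0$) yields $F_0\leq (F_0\cap F_1)+(F_0\cap F_2)$, contradicting the cover-freeness of $\mathcal{F}$.

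For the first assertion of part~(ii), take $k=2t-1$ and use the defining property of $S_q(t,2t-1,n)$: each $t$-dimensional subspace lies in exactly one block, so two distinct blocks meet in dimension at most $t-1$ (a $t$-dimensional intersection would put a $t$-space in two blocks). Hence for distinct $F_0,F_1,F_2\in\mathcal{F}$, submodularity gives $\dim\big((F_0\cap F_1)+(F_0\cap F_2)\big)\leq (t-1)+(t-1)=2t-2<2t-1=\dim F_0$, so $F_0$ cannot be contained in $(F_0\cap F_1)+(F_0\cap F_2)$, and $\mathcal{F}$ is cover-free. The second assertion is then immediate: since $\mathcal{F}$ is cover-free in $\left[V\atop 2t-1\right]$, part~(i) with $k=2t-1$ shows $\mathcal{F}'$ is cover-free in $\left[E\oplus V\atop 2t\right]$.

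The genuinely delicate point is the bookkeeping in part~(i): verifying that Lemma~\ref{vv}(ii) applies verbatim with $E$ playing the role of $T$, that distinct members of $\mathcal{F}'$ pull back to distinct members of $\mathcal{F}$, and that the containment $E\oplus F_0\leq E\oplus M$ really does collapse to $F_0\leq M$. Everything else is a routine dimension estimate, so I expect no conceptual obstacle beyond keeping these reductions clean.
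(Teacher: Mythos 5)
Your proposal is correct and follows essentially the same route as the paper: both compute $(E\oplus F_0)\cap(E\oplus F_i)=E\oplus(F_0\cap F_i)$ via Lemma~\ref{vv}(ii), collapse the resulting containment back to $F_0\leq (F_0\cap F_1)+(F_0\cap F_2)$ by intersecting with $V$, and derive part~(ii) from the bound $\dim(F\cap A+F\cap A')\leq 2t-2<2t-1$ together with part~(i). The only difference is your contrapositive phrasing, which is logically identical to the paper's proof by contradiction.
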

\begin{proof}~${\rm(i)}$ Suppose that $\mathcal{F}$ is a cover-free family in $\left[V\atop k\right]$. If there are three distinct subspaces $F_{0}, F_{1}, F_{2}\in \mathcal{F}\subseteq \left[V\atop k\right]$ such that
\begin{align}
E\oplus F_{0}\leq &(E\oplus F_{0})\cap (E\oplus F_{1})+(E\oplus F_{0})\cap (E\oplus F_{2})\notag\\
=&(E\oplus F_{0}\cap F_{1})+(E\oplus F_{0}\cap F_{2})\tag{{\rm by~Lemma~\ref{vv}(ii)}}\\
=&E\oplus F_{0}\cap F_{1}+F_{0}\cap F_{2}\notag,
\end{align}
then $F_{0}=V\cap(E\oplus F_{0})\leq V\cap(E\oplus F_{0}\cap F_{1}+F_{0}\cap F_{2})=F_{0}\cap F_{1}+F_{0}\cap F_{2}$ by Lemma \ref{vv}, which contradicts that $\mathcal{F}$ is cover-free. As a result $\mathcal{F}'$ is a cover-free  family in the $(n+1)$-dimensional vector space $E\oplus V$.

${\rm(ii)}$ Suppose that $\mathcal{F}$ is a $q$-Steiner system $S_{q}(t, 2t-1, n)$. Thus ${\dim}(A\cap A')\leq t-1$ holds for all $A, A'\in \mathcal{F}$ with $A\neq A'$. Hence, we have
$${\dim}(F\cap A+F\cap A')\leq 2t-2<{\dim}(F)$$
for any $F\in \mathcal{F}\backslash \{A, A'\}$, that is, $F\nleq (F\cap A)+(F\cap A')$. Then $\mathcal{F}$ is a cover-free family in $\left[V\atop 2t-1\right]$. Consequently, we have that $\mathcal{F}'$ is a cover-free family in $\left[E\oplus V\atop 2t\right]$ by (i).
%${\rm(iii)}$ Suppose that $\mathcal{H}$ is a $q$-Steiner system $S_{q}(t, 2t-1, n-1)$ on $W$, where $W \in \left[V\atop n-1\right]$. We have $\mathcal{H}$ is a cover-free family on $W$ by ${\rm(ii)}$. And then by ${\rm(i)}$, $\{E\oplus H:H\in \mathcal{H}\}$ is a cover-free family on $V$, where $E \in \left[V\atop 1\right]$ and $E\oplus W=V$.
\end{proof}

\begin{rmk}\label{rm}
By Proposition~\ref{p22}, an $S_{q}(t, 2t-1, n)$ on $V$ gives rise to a cover-free family of cardinality $\frac{\left[n\atop t\right]}{\left[2t-1\atop t\right]}$ in $\left[V\atop 2t-1\right]$; and an $S_{q}(t, 2t-1, n-1)$ on $W$ gives a cover-free family of cardinality $\frac{\left[n-1\atop t\right]}{\left[2t-1\atop t\right]}$ in $\left[V\atop 2t\right]$ where $V=E\oplus W$. So what remains to prove Theorem~\ref{vsc} is to handle the upper bound of $|\mathcal{F}|$ and prove the ``only if~\!'' part of the equality. Actually the proof can ba reduced to the case of even $k$ in part (i). Suppose that (i) has been proved and $\mathcal{F}$ is a cover-free family in $\left[V\atop 2t-1\right]$. Then $\mathcal{F}'$ defined in~\eqref{a3} is a cover-free family in $\left[E\oplus V\atop 2t\right]$ and hence $|\mathcal{F}|=|\mathcal{F}'|\leq \frac{\left[n\atop t\right]}{\left[2t-1\atop t\right]}$ by Proposition~\ref{p22}. Further if $|\mathcal{F}|=\frac{\left[n\atop t\right]}{\left[2t-1\atop t\right]}$, then $\mathcal{F}'$ is a cover-free family in $\left[E\oplus V\atop 2t\right]$ of size $\frac{\left[n\atop t\right]}{\left[2t-1\atop t\right]}$ and $\mathcal{F}$ is an $S_{q}(t, 2t-1, n)$ by Theorem~\ref{vsc}(i). This means that part~(ii) also holds.
\end{rmk}

\section{ Preliminaries}
In this section, we recall a number of basic results and establish several new lemmas in the vector spaces, which are essential for our proofs. For a family $\mathcal{H}\subseteq \left[V\atop k\right]$, we define its {\it shadow} by
\begin{equation*}
\bigtriangleup(\mathcal{H})=\left\{G\in \left[V\atop k-1\right]:G\leq H~{\rm for~some}~H\in \mathcal{H}\right\}.
\end{equation*}
Now, we introduce the celebrated Erd\H{o}s-Ko-Rado theorem for vector spaces.

\begin{thm}\label{EKR}{\rm(\cite[Theorem 1]{ekr vector},~\cite[Theorem 3]{ekrbc})}
Suppose $\mathcal{H}\subseteq \left[V\atop k\right]$ is an intersecting family. Then we have

\begin{equation*}
|\mathcal{H}|\!\leq\!
\begin{cases}
\left[n-1\atop k-1\right], &\text{if~$n\geq 2k$},\\[.3cm]
\left[2k-1\atop k\right], &\text{if~$2k-1\leq n\leq2k$}.
\end{cases}\
\end{equation*}
Moreover, equality holds if and only if one of the following holds:
\begin{itemize}
\item[\rm(i)]~$n>2k$ and $\mathcal{H}=\left\{H\in\left[V\atop k\right]:E\leq H\right\}$ for some $E\in\left[V\atop 1\right]$.
\item[\rm(ii)]~$n=2k$ and $\mathcal{H}=\left\{H\in\left[V\atop k\right]:E\leq H\right\}$ for some $E\in\left[V\atop 1\right]$ or $\mathcal{H}=\left[Y\atop k\right]$ for some $Y\in\left[V\atop 2k-1\right]$.
\end{itemize}
\end{thm}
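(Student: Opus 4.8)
The plan is to model the problem as an independent-set problem in the $q$-Kneser graph and to read off both the bound and the extremal structures from the Hoffman (ratio) bound. First I would dispose of the degenerate endpoint $n=2k-1$: there any two $k$-subspaces $A,B$ satisfy $\dim(A\cap B)\ge \dim A+\dim B-n=1$, so every family in $\left[V\atop k\right]$ is automatically intersecting, the whole family $\left[V\atop k\right]$ is the unique maximum, and $\left|\left[V\atop k\right]\right|=\left[2k-1\atop k\right]$ already matches the claimed bound. This leaves the genuine range $n\ge 2k$, where I would let $\Gamma$ be the graph on vertex set $\left[V\atop k\right]$ in which two $k$-subspaces are adjacent exactly when they meet trivially; an intersecting family is precisely an independent set in $\Gamma$.

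For the upper bound I would invoke the spectral data of $\Gamma$. It is regular of degree $d=q^{k^{2}}\left[n-k\atop k\right]$, it is a relation in the Grassmann association scheme, and its least eigenvalue is $\tau=-q^{k^{2}-k}\left[n-k-1\atop k-1\right]$. The Hoffman ratio bound then gives, for every independent set $\mathcal H$,
\[
|\mathcal H|\le \left[n\atop k\right]\cdot\frac{-\tau}{d-\tau}.
\]
A routine manipulation of Gaussian coefficients collapses the right-hand side to $\left[n-1\atop k-1\right]$; when $n=2k$ this equals the stated $\left[2k-1\atop k\right]$ by the symmetry $\left[2k-1\atop k-1\right]=\left[2k-1\atop k\right]$, so both displayed cases are covered at once. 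The only computation here is the eigenvalue identity, which is mechanical once the scheme's spectrum is in hand.

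The delicate part — and the step I expect to be the main obstacle — is the equality characterisation. I would exploit the rigidity built into the ratio bound: if $\mathcal H$ attains equality, then $\mathbf 1_{\mathcal H}-\tfrac{|\mathcal H|}{|V(\Gamma)|}\mathbf 1$ is a $\tau$-eigenvector, so $\mathbf 1_{\mathcal H}\in V_{0}\oplus V_{1}$, the sum of the all-ones space and the least eigenspace of the scheme. The key structural input is that the point-indicator functions $f_{p}\colon A\mapsto[\,p\le A\,]$, for $p\in\left[V\atop 1\right]$, are linearly independent and span exactly $V_{0}\oplus V_{1}$ (their number equals $\dim(V_{0}\oplus V_{1})=\left[n\atop 1\right]$, via the full rank of the point-versus-$k$-space inclusion matrix). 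Hence $\mathbf 1_{\mathcal H}=\sum_{p}c_{p}f_{p}$, which forces the Boolean constraint $\sum_{p\le A}c_{p}\in\{0,1\}$ for every $A\in\left[V\atop k\right]$. The whole problem then reduces to classifying the $\{0,1\}$-valued ``linear'' functions of maximum weight on the Grassmannian, and carrying out this classification is where the real work lies: one must show the only solutions are the dictators $c_{p}=[\,p=E\,]$, giving the stars $\{H:E\le H\}$, together with — and only when $n=2k$ — the two-valued solutions $(c_{p})$ adapted to a hyperplane $Y\in\left[V\atop 2k-1\right]$, giving the subspace families $\left[Y\atop k\right]$.

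To control this classification I would argue locally, fixing a $(k-1)$-subspace $G$ and tracking how $\sum_{p\le A}c_{p}$ changes as $A$ ranges over the $k$-spaces containing $G$; comparing $k$-spaces that meet in dimension $k-1$ should force $(c_{p})$ to take at most two values and pin the support to a single point (all $n\ge 2k$) or to the hyperplane pattern (only at $n=2k$). Should the eigenspace bookkeeping prove unwieldy, an alternative is to bypass the equality case of the ratio bound and instead establish the strictly smaller Hilton--Milner-type bound for non-star intersecting families, so that any maximum family is forced to be a star, handling the extra $n=2k$ family separately; but I would prefer the spectral route, since it yields the bound and the starting point for the structure theorem uniformly across both ranges.
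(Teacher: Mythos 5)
This statement is quoted by the paper from the literature (the bound and the $n>2k$ characterization from Frankl--Wilson, the completion of the equality case at $n=2k$ from Tanaka), so there is no in-paper proof to compare against; I can only assess your proposal on its own terms. Your route is the standard algebraic one: the $n=2k-1$ endpoint is correctly dismissed, the valency $q^{k^2}\left[n-k\atop k\right]$ and least eigenvalue $-q^{k(k-1)}\left[n-k-1\atop k-1\right]$ of the $q$-Kneser graph are the right spectral data, and the ratio $\frac{-\tau}{d-\tau}\left[n\atop k\right]$ does collapse to $\left[n-1\atop k-1\right]$ (which equals $\left[2k-1\atop k\right]$ at $n=2k$), so the inequality part of your argument is sound modulo citing the spectrum of the Grassmann scheme.

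The gap is in the equality characterization, and you have located it but not closed it. Granting that $\mathbf 1_{\mathcal H}$ lies in $V_0\oplus V_1$ and that this module is spanned by the point-indicators $f_p$, the entire content of the theorem becomes the classification of coefficient vectors $(c_p)$ with $\sum_{p\leq A}c_p\in\{0,1\}$ for all $A\in\left[V\atop k\right]$ whose support is an independent set of the extremal size. Your proposed local argument --- comparing $k$-spaces through a fixed $(k-1)$-space $G$ --- only yields that the partial sums $\sum_{p\leq A,\,p\not\leq G}c_p$ take the two values $-s_G$ and $1-s_G$ where $s_G=\sum_{p\leq G}c_p$; it does not by itself force $(c_p)$ to be a dictator, nor does it explain why the two-valued hyperplane solution (which genuinely exists in $V_0\oplus V_1$ for every $n$, e.g. $c_p=\tfrac{1}{q+1}$ on a hyperplane and $c_p=-\tfrac{1}{q(q+1)}$ off it when $k=2$) is excluded for $n>2k$ yet admissible at $n=2k$. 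The $n$-dependence must enter through the independence/size condition, and that interaction is exactly the hard step --- it is the reason the paper must cite Tanaka's classification of subsets with minimal width and dual width rather than a routine eigenspace computation. As written, your proposal is a correct and well-chosen strategy for the bound, but the ``moreover'' clause remains unproved.
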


Chowdhury and Patk\'{o}s \cite{shadow} proved a vector space analog of the Lov\'{a}sz's theorem~\cite{Lo} on shadows of sets. In the process, they established the following theorem.
\begin{thm}\label{k+1}{\rm(\cite[Theorem 2.2]{shadow})}
Let $\mathcal{H}\subseteq \left[V\atop k\right]$ and let $m\geq k$ be the real number which satisfies $|\mathcal{H}|=\left[m\atop k\right]$. Denote $\chi(\mathcal{H})=\left\{T\in \left[V\atop k+1\right]:\left[T\atop k\right]\subseteq \mathcal{H}\right\}$. Then
\begin{equation*}
|\chi(\mathcal{H})|\leq \left[m\atop k+1\right].
\end{equation*}
Moreover, equality holds if and only if $\mathcal{H}=\left[M\atop k\right]$ for some $M\in\left[V\atop m\right], m\in \mathbb{Z}^{+}$.
\end{thm}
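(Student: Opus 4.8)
The plan is to deduce the statement from the vector-space Kruskal--Katona (Lov\'asz-type) shadow bound, which is precisely the shadow theorem alluded to just before this lemma; the two results are equivalent through a short duality, so I would prove the shadow bound by the hard inductive argument and read off Theorem~\ref{k+1} for free. Work with the lower shadow operator $\bigtriangleup$ carrying a family of $(k+1)$-spaces to the $k$-spaces they contain, and set $\mathcal{G}:=\chi(\mathcal{H})$. The key observation is immediate from the definition of $\chi$: every $T\in\mathcal{G}$ satisfies $\left[T\atop k\right]\subseteq\mathcal{H}$, so $\bigtriangleup(\mathcal{G})\subseteq\mathcal{H}$ and therefore $|\bigtriangleup(\mathcal{G})|\le|\mathcal{H}|=\left[m\atop k\right]$.

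To obtain the bound, first dispose of the trivial case $\mathcal{G}=\emptyset$, where the inequality holds since $\left[m\atop k+1\right]\ge 0$. Otherwise write $|\mathcal{G}|=\left[y\atop k+1\right]$ for the unique real $y\ge k+1$. The shadow bound applied to the family $\mathcal{G}\subseteq\left[V\atop k+1\right]$ then gives $|\bigtriangleup(\mathcal{G})|\ge\left[y\atop k\right]$. Combining this with $|\bigtriangleup(\mathcal{G})|\le\left[m\atop k\right]$ yields $\left[y\atop k\right]\le\left[m\atop k\right]$, and since the map $m\mapsto\left[m\atop k\right]$ is strictly increasing for $m\ge k$ this forces $y\le m$. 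Monotonicity of $m\mapsto\left[m\atop k+1\right]$ then gives $|\mathcal{G}|=\left[y\atop k+1\right]\le\left[m\atop k+1\right]$, which is the desired inequality.

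For the equality characterization I would run the chain backwards. If $|\chi(\mathcal{H})|=\left[m\atop k+1\right]$ then $y=m$, hence $\left[y\atop k\right]=\left[m\atop k\right]=|\mathcal{H}|$, and the sandwich $\left[m\atop k\right]=\left[y\atop k\right]\le|\bigtriangleup(\mathcal{G})|\le|\mathcal{H}|=\left[m\atop k\right]$ collapses. This forces both $\bigtriangleup(\mathcal{G})=\mathcal{H}$ and equality in the shadow bound. Invoking the equality case of the shadow theorem, $\mathcal{G}=\left[M\atop k+1\right]$ for some subspace $M$ whose (necessarily integral) dimension equals $m$, so that $m\in\mathbb{Z}^{+}$; then $\mathcal{H}=\bigtriangleup(\mathcal{G})=\left[M\atop k\right]$, exactly as claimed.

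The genuine obstacle is the shadow bound $|\bigtriangleup(\mathcal{G})|\ge\left[y\atop k\right]$ together with its equality case, which is the $q$-analog of Kruskal--Katona/Lov\'asz and cannot be obtained from soft arguments such as the normalized matching property (that would only give the weaker profile bound $|\bigtriangleup(\mathcal{G})|/\left[n\atop k\right]\ge|\mathcal{G}|/\left[n\atop k+1\right]$). I would attack it by induction on $k$ and $n$: fix a hyperplane $W\le V$, split $\mathcal{G}$ according to whether $T\le W$, apply the induction hypothesis to the part lying inside $W$ (dimension $n-1$) and to the trace family $\{T\cap W\}$ of $k$-spaces in $W$ (one dimension lower), and recombine the two estimates using the $q$-Pascal identities for the Gaussian coefficients together with a convexity/superadditivity inequality that controls how the real parameter distributes across the split. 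Carrying this estimate through, and keeping the complete-family characterization of equality consistent at every stage of the recursion, is where essentially all of the work lies.
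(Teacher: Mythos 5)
This statement is quoted verbatim from Chowdhury and Patk\'os \cite[Theorem 2.2]{shadow}; the present paper supplies no proof of it, so there is no internal argument to compare against. Your reduction step is correct and clean: from $\bigtriangleup(\chi(\mathcal{H}))\subseteq\mathcal{H}$ and the $q$-analog of Lov\'asz's shadow bound applied to $\mathcal{G}=\chi(\mathcal{H})$ with $|\mathcal{G}|=\left[y\atop k+1\right]$, monotonicity of $m\mapsto\left[m\atop k\right]$ gives $y\leq m$ and hence the inequality, and your equality analysis (forcing $y=m$, $\bigtriangleup(\mathcal{G})=\mathcal{H}$, and the complete-subspace structure of $\mathcal{G}$, hence of $\mathcal{H}$) goes through, modulo the small omitted case $\chi(\mathcal{H})=\emptyset$, which forces $m=k$ and is harmless.

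The genuine gap is that everything of substance is deferred to the shadow bound $|\bigtriangleup(\mathcal{G})|\geq\left[y\atop k\right]$ together with its equality case, and this is not a result you may treat as independent background: as you yourself note, the two statements are mutually derivable by short arguments (the converse direction uses $\mathcal{G}\subseteq\chi(\bigtriangleup(\mathcal{G}))$), and in the cited source the logical order is the reverse of yours --- Chowdhury and Patk\'os prove the $\chi$-bound (their Theorem 2.2) first and then deduce the shadow theorem from it. So invoking the shadow theorem to prove Theorem~\ref{k+1} is circular unless you supply an independent proof of the former, and the one you sketch (hyperplane splitting, trace families, $q$-Pascal recombination with a convexity input) is only a template. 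For vector spaces the standard Kruskal--Katona compressions are unavailable, the trace of a $(k+1)$-space on a hyperplane is not controlled the way a link/trace is in the set case, and the published proof requires considerably more delicate counting than this outline suggests; nothing in the sketch indicates how the tight constant $\left[y\atop k\right]$ or the equality characterization would survive the recursion. As written, the proposal relocates the difficulty rather than resolving it.
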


In \cite{katona}, Frankl proved the following simple but useful basic fact.

\begin{lem}\label{lem}{\rm(\cite[Lemma 4]{katona})}
For $1\leq a\leq n$, there is a bijection
\begin{equation*}
\psi:\left[V\atop a\right]\rightarrow \left[V\atop n-a\right]
\end{equation*}
such that $A\oplus \psi(A)=V$ holds for all $A\in \left[V\atop a\right]$.
\end{lem}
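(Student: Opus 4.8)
The plan is to establish the existence of $\psi$ non-constructively, via a perfect-matching argument, rather than by exhibiting an explicit formula. The naive guess---taking $\psi(A)$ to be the orthogonal complement of $A$ with respect to a fixed nondegenerate bilinear form---fails, because an isotropic subspace $A$ satisfies $A\cap A^{\perp}\neq\{\textup{\textbf{0}}\}$, so $A^{\perp}$ need not be a complement of $A$. Recognizing that no such canonical algebraic complement is available, and reformulating the problem as a matching, is the main conceptual hurdle; once that is done the argument is short.

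First I would record that the two index sets have the same cardinality, since $\left[n\atop a\right]=\left[n\atop n-a\right]$ gives $\left|\left[V\atop a\right]\right|=\left|\left[V\atop n-a\right]\right|$. Next I would form the bipartite graph $G$ whose two vertex classes are $\left[V\atop a\right]$ and $\left[V\atop n-a\right]$, joining $A$ to $B$ precisely when $A\oplus B=V$; because $\dim A+\dim B=n$, this incidence is equivalent to $A\cap B=\{\textup{\textbf{0}}\}$. A map $\psi$ as required is exactly a perfect matching of $G$, so it suffices to produce one.

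The key step is to show that $G$ is regular. For a fixed $A\in\left[V\atop a\right]$, its complements are in bijection with $\mathrm{Hom}(V/A,A)$: choosing one complement $C$, every complement $B$ is the graph $\{c+f(c):c\in C\}$ of a unique linear map $f\colon C\to A$, and conversely each such graph is a complement. Hence $A$ has exactly $q^{a(n-a)}$ complements. The identical computation shows that each $B\in\left[V\atop n-a\right]$ has exactly $q^{(n-a)a}=q^{a(n-a)}$ complements in $\left[V\atop a\right]$. Therefore $G$ is $q^{a(n-a)}$-regular on both sides.

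Finally, a $d$-regular bipartite graph with $d\geq 1$ has vertex classes of equal size and satisfies Hall's condition (the $d|S|$ edges leaving any set $S$ of one class can reach at least $|S|$ vertices of the other), so $G$ contains a perfect matching by Hall's theorem; this matching defines the desired bijection $\psi$ with $A\oplus\psi(A)=V$ for all $A\in\left[V\atop a\right]$. The only quantitative ingredient is the complement count $q^{a(n-a)}$, which is routine; the substantive move is the matching reformulation itself.
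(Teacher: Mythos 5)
Your argument is correct and complete: the complement count $q^{a(n-a)}$ is exactly the paper's Lemma 2.5 with $l=n-m$, and regularity plus Hall's theorem does yield the required perfect matching. The paper itself states this lemma without proof, citing Frankl and Tokushige, and the proof given there is essentially this same bipartite regular-graph matching argument, so there is nothing to add.
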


It is routine to enumerate the subspaces that intersect a given subspace trivially.

\begin{lem}\label{lemc21}{\rm(\cite[Propositions 2.2]{chen})} Let $Z$ be an $m$-dimensional subspace of the $n$-dimensional vector space $V$ over~$\mathbb{F}_{q}$. For a positive integer $l$ with $m+l\leq n$, the number of~$l$-dimensional subspaces~~$W$~of~~$V$~such that ${\rm dim}(Z\cap W)=0$ is $q^{lm}\left[n-m\atop l\right]$.
\end{lem}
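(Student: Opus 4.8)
The plan is to count the desired subspaces by realizing each of them as the graph of a linear map, which immediately exposes both factors $q^{lm}$ and $\left[n-m\atop l\right]$ in the claimed count. First I would fix a complement of $Z$: by Lemma~\ref{lem} (or simply by extending a basis of $Z$) there is a $Y\in\left[V\atop n-m\right]$ with $Z\oplus Y=V$, and I let $\pi\colon V\to Y$ denote the projection with kernel $Z$. The key observation is that an $l$-dimensional subspace $W$ satisfies $\dim(Z\cap W)=0$ precisely when $\pi$ restricts to an injection on $W$, equivalently when $\pi(W)$ is an $l$-dimensional subspace of $Y$.

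Next I would set up the bijection. To each admissible $W$ associate the pair $(U,\phi)$, where $U=\pi(W)\in\left[Y\atop l\right]$ and $\phi\colon U\to Z$ is the linear map sending $u$ to $w-u$, with $w$ the unique vector of $W$ satisfying $\pi(w)=u$ (this is well-defined and linear because $\pi|_W$ is a linear isomorphism onto $U$). Conversely, a pair $(U,\phi)$ with $U\in\left[Y\atop l\right]$ and $\phi\in\operatorname{Hom}(U,Z)$ yields $W=\{u+\phi(u):u\in U\}$, an $l$-dimensional subspace meeting $Z$ trivially, since $u+\phi(u)\in Z$ forces $\pi(u+\phi(u))=u=0$. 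These two assignments are mutually inverse, so the count of admissible $W$ equals the number of such pairs.

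Finally I would evaluate the number of pairs: there are $\left[n-m\atop l\right]$ choices for $U$ (the number of $l$-dimensional subspaces of the $(n-m)$-dimensional space $Y$), and for each fixed $U$ there are $q^{lm}$ linear maps $U\to Z$, because a linear map from an $l$-dimensional space to an $m$-dimensional space over $\mathbb{F}_q$ is determined freely by the images of a basis, giving $(q^m)^l$ choices. Multiplying yields $q^{lm}\left[n-m\atop l\right]$, as claimed.

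I expect the only genuinely delicate point to be verifying that the two assignments are mutually inverse bijections, in particular that every $W$ with $Z\cap W=\{0\}$ arises, and arises uniquely, as such a graph; once the isomorphism $\pi|_W\colon W\to\pi(W)$ is in hand this is routine but should be checked carefully. An alternative, purely enumerative route avoids the bijection: count ordered $l$-tuples $(w_1,\dots,w_l)$ that are independent modulo $Z$, where $w_i$ may be any vector outside $Z+\langle w_1,\dots,w_{i-1}\rangle$ and hence has $q^n-q^{m+i-1}$ choices, then divide by the number $\prod_{i=0}^{l-1}(q^l-q^i)$ of ordered bases of a fixed $l$-dimensional space; here the main (and again routine) obstacle is the algebraic simplification that extracts $q^{lm}$ and reassembles the Gaussian coefficient.
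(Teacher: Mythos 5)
Your proof is correct. The paper itself does not prove this lemma (it is quoted from Chen--Rota), but your graph-of-a-linear-map parametrization is exactly the description the paper later extracts from that source in equation~\eqref{eq200} of Lemma~\ref{ca1}, where the fibers $\left[B\oplus M\atop k,0\right]_{M}$ of size $q^{mk}$ are precisely your sets of graphs $\{u+\phi(u)\}$ over a fixed $U=B$; so your argument matches the intended one, and both your bijective count and your alternative enumerative count are sound.
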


For $U\leq V$ and $M\leq U$, define the family
\begin{equation*}
\left[U\atop k,0\right]_{M}=\left\{A\in \left[U\atop k\right]:{\dim}(M\cap A)=0\right\}.
\end{equation*}
By Lemma~\ref{lemc21}, the cardinality of $\left[U\atop k,0\right]_{M}$ is $q^{mk}\left[u-m\atop k\right]$ if ${\rm \dim}(U)=u$ and ${\rm \dim}(M)=m$. We will make use of a partition of $\left[U\atop k,0\right]_{M}$ into $\left[u-m\atop k\right]$ subfamilies in the following two lemmas.

\begin{lem}\label{ca1}
For a subspace $U\in \left[V\atop u\right]$, let $M\in \left[U\atop m\right]$, $W\in \left[U\atop u-m\right]$, $M\cap W=\{\textup{\textbf{0}}\}$ and $W\oplus M=U$. For $1\leq k\leq u-m$, the following hold.
\begin{itemize}
\item[\rm(i)] If $B, B'\in\left[W\atop k\right]$ and $B\neq B'$, then
%\begin{equation*}
$\left[B\oplus M\atop k,0\right]_{M}\bigcap\left[B'\oplus M\atop k,0\right]_{M}=\emptyset$.
%\end{equation*}
\item[\rm(ii)]
$\left[U\atop k,0\right]_{M}=\biguplus\limits_{B\in\left[W\atop k\right]}\left[B\oplus M\atop k,0\right]_{M}$, where $\biguplus$ denotes the union of pairwise disjoint sets.
\end{itemize}
\end{lem}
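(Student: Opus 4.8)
The plan is to handle both parts through a single linear map: the projection $\pi\colon U\to W$ along $M$, that is, the unique linear map with kernel $M$ that fixes $W$ pointwise. This is well defined precisely because $U=W\oplus M$. I would first record three elementary properties of $\pi$ that drive everything. First, since $\ker\pi=M$, for any $A\le U$ with $A\cap M=\{\mathbf 0\}$ the restriction $\pi|_A$ is injective, so $\dim\pi(A)=\dim A$. Second, writing each $u\in U$ as $u=\pi(u)+(u-\pi(u))$ with $u-\pi(u)\in M$, one gets $A\le\pi(A)\oplus M$ for every $A\le U$ (the sum is a genuine direct sum because $\pi(A)\le W$ and $W\cap M=\{\mathbf 0\}$). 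Third, $\pi(B\oplus M)=B$ for every $B\le W$, since $\pi$ restricts to the identity on $W$ and kills $M$.

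For part (i), I would argue by contradiction: suppose $A$ lies in both $\left[B\oplus M\atop k,0\right]_M$ and $\left[B'\oplus M\atop k,0\right]_M$. Then $A\le B\oplus M$ and $A\le B'\oplus M$, so applying $\pi$ and the third property gives $\pi(A)\le B$ and $\pi(A)\le B'$, whence $\pi(A)\le B\cap B'$. But $A\cap M=\{\mathbf 0\}$ forces $\dim\pi(A)=k$ by the first property, while $B\neq B'$ are distinct $k$-dimensional subspaces of $W$, so $\dim(B\cap B')\le k-1$; this dimension clash proves the two families are disjoint. (Equivalently, one may compute $(B\oplus M)\cap(B'\oplus M)=M\oplus(B\cap B')$ directly from Lemma~\ref{vv}(ii) and reach the same contradiction.)

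For part (ii), the inclusion $\biguplus_{B}\left[B\oplus M\atop k,0\right]_M\subseteq\left[U\atop k,0\right]_M$ is immediate, since $B\oplus M\le W\oplus M=U$ shows every member of a right-hand piece is a $k$-subspace of $U$ meeting $M$ trivially. For the reverse inclusion I would use the projection constructively: given $A\in\left[U\atop k,0\right]_M$, set $B:=\pi(A)$; the first two properties give $B\in\left[W\atop k\right]$ and $A\le B\oplus M$, so $A\in\left[B\oplus M\atop k,0\right]_M$. Combining this with the disjointness from part (i) yields exactly the claimed partition.

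The argument is short, and the only genuine care lies in setting up $\pi$ and simultaneously verifying $\dim\pi(A)=k$ and $A\le\pi(A)\oplus M$; I expect this bookkeeping, rather than any real difficulty, to be the crux. As a consistency check I would note that the cardinalities match: each piece has $q^{mk}\left[k\atop k\right]=q^{mk}$ elements by Lemma~\ref{lemc21}, and summing over the $\left[u-m\atop k\right]$ choices of $B$ recovers exactly $q^{mk}\left[u-m\atop k\right]=\left|\left[U\atop k,0\right]_M\right|$, so the explicit construction step in part (ii) could alternatively be replaced by this counting argument once disjointness is in hand.
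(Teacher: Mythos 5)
Your proof is correct, and it takes a cleaner route than the paper's. The paper proves (i) by invoking the explicit generator description $\left[B\oplus M\atop k,0\right]_{M}=\{\langle b_{1}+v_{1},\ldots,b_{k}+v_{k}\rangle: v_{i}\in M\}$ imported from the proof of Lemma~\ref{lemc21} in the Chen--Rota reference, deducing $B\oplus M=A\oplus M=B'\oplus M$ and then $B=B'$ via Lemma~\ref{vv}(i); and it proves (ii) purely by the cardinality count you mention as an alternative (disjointness plus $q^{mk}\left[u-m\atop k\right]=\left|\left[U\atop k,0\right]_{M}\right|$ plus the trivial inclusion). You instead package everything into the projection $\pi$ of $U$ onto $W$ along $M$, which is essentially the coordinate-free version of the same idea: your three properties of $\pi$ replace the basis manipulation, your dimension clash $\dim\pi(A)=k>\dim(B\cap B')$ replaces the appeal to Lemma~\ref{vv}(i), and your constructive step $B:=\pi(A)$ gives the reverse inclusion in (ii) directly rather than by counting. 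What your approach buys is self-containedness (no reliance on the internals of a cited proof, and no counting needed) and an explicit description of which piece a given $A$ belongs to, namely $B=\pi(A)$, which is exactly the map $\varphi_{k}$ the paper later defines in Lemma~\ref{l900}; what the paper's counting argument buys is that it reuses the already-stated cardinality formula and so is marginally shorter on the page. Both arguments are complete and correct.
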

\begin{proof}~(i)~For any $B=\langle b_{1}, b_{2}, \ldots, b_{k}\rangle \in\left[W\atop k\right]$, from the proof of Lemma~\ref{lemc21} in \cite{chen}, we have
\begin{equation}\label{eq200}
\left[B\oplus M\atop k,0\right]_{M}=\left\{\langle b_{1}+v_{1}, b_{2}+v_{2}, \ldots, b_{k}+v_{k}\rangle: v_{i}\in M, 1\leq i\leq k\right\}
\end{equation}
and $$\left|\left[B\oplus M\atop k,0\right]_{M}\right|=q^{mk}.$$

 %we have
%\begin{equation}\label{e002}
%\left[B\oplus M\atop k,0\right]_{M}\bigcap\left[B'\oplus M\atop k,0\right]_{M}=\emptyset.
%\end{equation}
Suppose $B, B'\in \left[W\atop k\right]$ and $B\neq B'$. Let $B=\langle b_{1}, b_{2}, \ldots, b_{k}\rangle, B'=\langle b_{1}', b_{2}', \ldots, b_{k}'\rangle$. If there exists $A\in\left[U\atop k\right]$ such that $$A\in\left[B\oplus M\atop k,0\right]_{M}\bigcap\left[B'\oplus M\atop k,0\right]_{M},$$ then
\begin{equation*}
A=\langle b_{1}+v_{1}, b_{2}+v_{2}, \ldots, b_{k}+v_{k}\rangle=\langle b_{1}'+w_{1}, b_{2}'+w_{2}, \ldots, b_{k}'+w_{k}\rangle
\end{equation*}
by~\eqref{eq200}, where $v_{i}, w_{i}\in M~(1\leq i\leq k)$. Thus, $B\oplus M=A\oplus M=B'\oplus M$ and
\begin{equation*}
B=(B\oplus M)\cap W=(B'\oplus M)\cap W=B'
\end{equation*}
by Lemma~\ref{vv}(i), which is a contradiction to the premise. Hence, the result of (i) follows.

(ii)~From (i), it follows that
\begin{equation*}
\begin{array}{rl}
\left|\bigcup\limits_{B\in\left[W\atop k\right]}\left[B\oplus M\atop k,0\right]_{M}\right|=\left|\biguplus\limits_{B\in\left[W\atop k\right]}\left[B\oplus M\atop k,0\right]_{M}\right|=q^{mk}\left[u-m\atop k\right]=\left|\left[U\atop k,0\right]_{M}\right|.
\end{array}
\end{equation*}
It is obvious that for any $B\in\left[W\atop k\right]$, we have $\left[B\oplus M\atop k,0\right]_{M}\subseteq \left[U\atop k,0\right]_{M}$ and this completes the proof.
\end{proof}

\begin{rmk}\label{r100}
With the notation of Lemma~\ref{ca1}, if $B\in \left[W\atop k\right]$ and $A\in\left[B\oplus M\atop k,0\right]_{M}$, then $\left[A\oplus M\atop k,0\right]_{M}=\left[B\oplus M\atop k,0\right]_{M}$.
\end{rmk}

Now we prove the following result on shadows of vector spaces, which will be very useful in the proof of our main theorem.

\begin{lem}\label{l900}
Let $E\in \left[V\atop 1\right]$, $W\in\left[V\atop n-1\right]$ satisfy that $E\oplus W=V$. Further let $\mathcal{H}\subseteq \left[V\atop k,0\right]_{E}$ and $|\mathcal{H}|=q^{k}\left[m\atop k\right]$, where $m\in \mathbb{R}$. Suppose that for any $A\in \mathcal{H}$, $\left[A\oplus E\atop k,0\right]_{E}\subseteq \mathcal{H}$. Then
%$\mathcal{H}=\bigcup\limits_{A\in\mathcal{H}}\left[A\oplus E\atop k,0\right]_{E}$. Then
\begin{equation*}
\left|\bigtriangleup(\mathcal{H})\right|\geq q^{k-1}\left[m\atop k-1\right].
\end{equation*}
Moreover, equality holds if and only if $\mathcal{H}=\biguplus\limits_{B\in \left[M\atop k\right]}\left[B\oplus E\atop k,0\right]_{E}$ for some $M\in\left[W\atop m\right]$, where $m\in \mathbb{Z}^{+}$.
\end{lem}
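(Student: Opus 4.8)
The plan is to reduce the whole statement to a shadow estimate inside the hyperplane $W$. First I would use the closure hypothesis together with Lemma~\ref{ca1} and Remark~\ref{r100} to see that $\mathcal{H}$ is forced to be a union of entire classes. Concretely, setting $\mathcal{B}=\{B\in\left[W\atop k\right]:\left[B\oplus E\atop k,0\right]_{E}\subseteq\mathcal{H}\}$, Lemma~\ref{ca1}(ii) gives $\mathcal{H}=\biguplus_{B\in\mathcal{B}}\left[B\oplus E\atop k,0\right]_{E}$; since each class has $q^{k}$ elements (Lemma~\ref{ca1} with $\dim E=1$) and $|\mathcal{H}|=q^{k}\left[m\atop k\right]$, this yields $|\mathcal{B}|=\left[m\atop k\right]$. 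Throughout I would identify the class containing a subspace $A$ with $(A\oplus E)\cap W\in\left[W\atop k\right]$, using $W\cap(B\oplus E)=B$, which follows from Lemma~\ref{vv}(i).

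The key step will be to show that $\bigtriangleup(\mathcal{H})$ is itself a union of $(k-1)$-dimensional classes indexed exactly by the shadow of $\mathcal{B}$ taken inside $W$. Writing $\bigtriangleup_{W}(\mathcal{B})=\{C\in\left[W\atop k-1\right]:C\le B \text{ for some }B\in\mathcal{B}\}$, I would prove $\bigtriangleup(\mathcal{H})=\biguplus_{C\in\bigtriangleup_{W}(\mathcal{B})}\left[C\oplus E\atop k-1,0\right]_{E}$. For the inclusion $\subseteq$: any $G\le A$ with $A\in\mathcal{H}$ satisfies $G\cap E=\{\textup{\textbf{0}}\}$, so $G$ lies in the class of $C:=(G\oplus E)\cap W$, and $G\le A$ forces $C=(G\oplus E)\cap W\le(A\oplus E)\cap W\in\mathcal{B}$, whence $C\in\bigtriangleup_{W}(\mathcal{B})$. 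For $\supseteq$, which is the lifting step: given $C\le B\in\mathcal{B}$ and any $G'$ in the class of $C$, one has $G'\oplus E=C\oplus E\le B\oplus E$, and among the $q+1$ subspaces $A$ with $G'\le A\le B\oplus E$ and $\dim A=k$ exactly one (namely $A=G'\oplus E$) contains $E$; since $q+1\ge 2$ I may pick $A$ with $A\cap E=\{\textup{\textbf{0}}\}$, so that $A\in\left[B\oplus E\atop k,0\right]_{E}\subseteq\mathcal{H}$ and hence $G'\in\bigtriangleup(\mathcal{H})$. Using the disjointness from Lemma~\ref{ca1}(i) and the size $q^{k-1}$ of each $(k-1)$-class, this gives the exact identity $|\bigtriangleup(\mathcal{H})|=q^{k-1}\,|\bigtriangleup_{W}(\mathcal{B})|$.

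It then remains to bound $|\bigtriangleup_{W}(\mathcal{B})|$ from below for $\mathcal{B}\subseteq\left[W\atop k\right]$ with $|\mathcal{B}|=\left[m\atop k\right]$. This is exactly the lower-shadow ($q$-Lov\'asz) inequality $|\bigtriangleup_{W}(\mathcal{B})|\ge\left[m\atop k-1\right]$, with equality if and only if $\mathcal{B}=\left[M\atop k\right]$ for some $M\in\left[W\atop m\right]$, $m\in\mathbb{Z}^{+}$. I would obtain it from Theorem~\ref{k+1} by passing to orthogonal complements in $W$: replacing $\mathcal{B}$ by $\{B^{\perp}\}$ is inclusion-reversing and converts the lower shadow into an upper shadow, whose size is controlled by the $\chi$-bound of Theorem~\ref{k+1} applied to the complementary family. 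Feeding $|\bigtriangleup_{W}(\mathcal{B})|\ge\left[m\atop k-1\right]$ into the identity of the previous paragraph gives $|\bigtriangleup(\mathcal{H})|\ge q^{k-1}\left[m\atop k-1\right]$, and tracing the equality case back through the class decomposition yields $\mathcal{H}=\biguplus_{B\in\left[M\atop k\right]}\left[B\oplus E\atop k,0\right]_{E}$. The main obstacle I anticipate is precisely this conversion from the stated $\chi$-version to the lower-shadow version carrying a matching equality characterization; the bookkeeping that relates the size parameter of the complementary family back to $m$ is the delicate part, whereas the lifting construction above is the conceptual heart of the argument.
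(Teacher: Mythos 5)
Your first two steps are correct and in fact sharper than what the paper itself establishes. The closure hypothesis together with Lemma~\ref{ca1} and Remark~\ref{r100} does force $\mathcal{H}=\biguplus_{B\in\mathcal{B}}\left[B\oplus E\atop k,0\right]_{E}$ with $|\mathcal{B}|=\left[m\atop k\right]$, and your lifting argument (among the $q+1$ subspaces $A$ with $G'\leq A\leq B\oplus E$ and $\dim A=k$, exactly one contains $E$, so $q\geq 1$ of them lie in $\left[B\oplus E\atop k,0\right]_{E}\subseteq\mathcal{H}$) correctly yields the exact identity $\left|\bigtriangleup(\mathcal{H})\right|=q^{k-1}\left|\bigtriangleup_{W}(\mathcal{B})\right|$. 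The paper only proves the one-sided containment $\biguplus_{G\in\mathcal{G}}\left[G\oplus E\atop k-1,0\right]_{E}\subseteq\bigtriangleup(\mathcal{H})$ for its family $\mathcal{G}$, which suffices but is less clean. So, as you say, everything reduces to the lower-shadow inequality $\left|\bigtriangleup_{W}(\mathcal{B})\right|\geq\left[m\atop k-1\right]$ for $\mathcal{B}\subseteq\left[W\atop k\right]$ with $|\mathcal{B}|=\left[m\atop k\right]$, together with its equality characterization.

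The gap is in how you propose to obtain that inequality from Theorem~\ref{k+1}. Orthogonal complementation turns $\bigtriangleup_{W}(\mathcal{B})$ into the upper shadow of $\mathcal{B}^{\perp}$, and the only way the $\chi$-bound reaches an upper shadow is through the identity (upper shadow of $\mathcal{A}$) $=\left[W\atop \cdot\right]\setminus\chi(\mathcal{A}^{c})$; you must then apply Theorem~\ref{k+1} to the complementary family $\mathcal{A}^{c}=(\mathcal{B}^{c})^{\perp}$. This provably cannot work: in the extremal case $\mathcal{B}=\left[M\atop k\right]$ the family $(\mathcal{B}^{c})^{\perp}=\left\{D:M^{\perp}\nleq D\right\}$ is not of the form $\left[M'\atop \cdot\right]$, so the inequality of Theorem~\ref{k+1} is \emph{strict} for it, and the lower bound on $\left|\bigtriangleup_{W}(\mathcal{B})\right|$ you extract falls strictly below $\left[m\atop k-1\right]$ exactly where you need equality; the equality characterization is then unreachable as well. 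The repair is to drop complementation entirely: every $B\in\mathcal{B}$ has all of its $(k-1)$-dimensional subspaces in $\bigtriangleup_{W}(\mathcal{B})$, so $\mathcal{B}\subseteq\chi(\bigtriangleup_{W}(\mathcal{B}))$; writing $\left|\bigtriangleup_{W}(\mathcal{B})\right|=\left[m_{2}\atop k-1\right]$ and applying Theorem~\ref{k+1} to $\bigtriangleup_{W}(\mathcal{B})$ itself gives $\left[m\atop k\right]\leq\left|\chi(\bigtriangleup_{W}(\mathcal{B}))\right|\leq\left[m_{2}\atop k\right]$, hence $m\leq m_{2}$ by monotonicity; equality forces $\bigtriangleup_{W}(\mathcal{B})=\left[M\atop k-1\right]$ and then $\mathcal{B}=\chi\bigl(\left[M\atop k-1\right]\bigr)=\left[M\atop k\right]$. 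This direct use of $\chi$ is precisely the paper's argument (phrased there via the maps $\varphi_{s}$ and the family $\mathcal{G}$), and with it your decomposition in the first two steps completes the proof.
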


\begin{proof}~By Lemma~\ref{ca1}(ii), for $1\leq s\leq n-1$ we can define maps
\begin{equation*}
\varphi_{s}:\left[V\atop s,0\right]_{E}\rightarrow \left[W\atop s\right]~\text{by}~A\mapsto B ~\text{if}~A\in \left[B\oplus E\atop s,0\right]_{E}.
\end{equation*}
It is easy to see
\begin{equation}\label{e003}
\varphi_{s}^{-1}(B)=\left[B\oplus E\atop s,0\right]_{E},~|\varphi_{s}^{-1}(B)|=q^{s}
\end{equation}
for any $B\in \left[W\atop s\right]$ by Lemma~\ref{lemc21}.

Suppose
\begin{equation}\label{401}
\left|\bigtriangleup(\mathcal{H})\right|=q^{k-1}\left[m_{1}\atop k-1\right],
\end{equation}
where $m_{1}\in \mathbb{R}$. Define
\begin{equation*}
\mathcal{G}=\left\{G\in \left[W\atop k-1\right]:\left[G\oplus E\atop k-1,0\right]_{E}\subseteq \bigtriangleup(\mathcal{H})\right\}
\end{equation*}
and let
\begin{equation}\label{607}
\left|\mathcal{G}\right|=\left[m_{2}\atop k-1\right],
\end{equation}
where $m_{2}\in \mathbb{R}$. By~Lemma~\ref{ca1}(i) and equations~\eqref{e003}-\eqref{607},
\begin{equation*}
q^{k-1}\left[m_{2}\atop k-1\right]\overset{\eqref{607}}{=}\left|\biguplus\limits_{G\in \mathcal{G}}\varphi_{k-1}^{-1}(G)\right|\leq \left|\bigtriangleup(\mathcal{H})\right|\overset{\eqref{401}}{=}q^{k-1}\left[m_{1}\atop k-1\right],
\end{equation*}
yielding
\begin{equation}\label{403}
m_{2}\leq m_{1}.
\end{equation}

By Theorem~\ref{k+1} and \eqref{607},
\begin{equation}\label{eq30}
\left|\chi(\mathcal{G})\right|=\left|\left\{T\in \left[W\atop k\right]:\left[T\atop k-1\right]\subseteq \mathcal{G}\right\}\right| \leq \left[m_{2}\atop k\right].
\end{equation}
We claim that $\bigcup\limits_{A\in \mathcal{H}}\{\varphi_{k}(A)\}\subseteq \chi(\mathcal{G})$. For any $A\in\mathcal{H}$ and $G\in \left[\varphi_{k}(A)\atop k-1\right]$, we have
\begin{equation*}
\left[G\oplus E\atop k-1,0\right]_{E}\subseteq \left[\varphi_{k}(A)\oplus E\atop k-1,0\right]_{E}.
\end{equation*}
Since $A\in \left[\varphi_{k}(A)\oplus E\atop k,0\right]_{E}$, we have $\left[\varphi_{k}(A)\oplus E\atop k,0\right]_{E}=\left[A\oplus E\atop k,0\right]_{E}\subseteq \mathcal{H}$ by Remark~\ref{r100}. It is easy to obtain that
\begin{equation*}
\left[\varphi_{k}(A)\oplus E\atop k-1,0\right]_{E}\subseteq \bigtriangleup(\mathcal{H}).
\end{equation*}
Thus $\left[G\oplus E\atop k-1,0\right]_{E}\subseteq \bigtriangleup(\mathcal{H})$ and then
\begin{equation}\label{404}
\bigcup\limits_{A\in \mathcal{H}}\{\varphi_{k}(A)\}\subseteq \chi(\mathcal{G}).
\end{equation}

By~Lemma~\ref{ca1}(i),~\eqref{e003} and using $|\mathcal{H}|=q^{k}\left[m\atop k\right]$ and the property of $\mathcal{H}$,  we have $\left|\bigcup\limits_{A\in \mathcal{H}}\{\varphi_{k}(A)\}\right|=\left[m\atop k\right]$. Then $m\leq m_{2}\leq m_{1}$ by~\eqref{403},~\eqref{eq30},~\eqref{404}. Hence,
\begin{equation}\label{eq80}
\left|\bigtriangleup(\mathcal{H})\right|\overset{\eqref{401}}{=}q^{k-1}\left[m_{1}\atop k-1\right]\geq q^{k-1}\left[m\atop k-1\right].
\end{equation}
This proves the inequality of this lemma.

If the equality in~\eqref{eq80} holds, then $\bigcup\limits_{A\in \mathcal{H}}\{\varphi_{k}(A)\}=\chi(\mathcal{G})$ and the equality in~\eqref{eq30} holds as well. Then
\begin{equation*}
\mathcal{G}=\left[M\atop k-1\right]
\end{equation*}
for some $M\in\left[W\atop m\right]$, $m\in \mathbb{Z}^{+}$ by Theorem~\ref{k+1} and
%\left\{G\in \left[M\atop k-1\right]:\left[G\oplus E\atop k-1,0\right]_{E}\subseteq \bigtriangleup(\mathcal{H})\right\}
\begin{equation*}
\bigcup\limits_{A\in \mathcal{H}}\{\varphi_{k}(A)\}=\chi(\mathcal{G})=\left[M\atop k\right].
\end{equation*}
Hence, $\mathcal{H}=\biguplus\limits_{B\in \left[M\atop k\right]}\varphi_{k}^{-1}(B)=\biguplus\limits_{B\in \left[M\atop k\right]}\left[B\oplus E\atop k,0\right]_{E}$.

Conversely, let $\mathcal{H}=\biguplus\limits_{B\in \left[M\atop k\right]}\left[B\oplus E\atop k,0\right]_{E}$ for some $M\in\left[W\atop m\right]$. Then $\mathcal{H}=\left[M\oplus E\atop k,0\right]_{E}$ by~Lemma~\ref{ca1}(ii). It follows that $\bigtriangleup(\mathcal{H})=\left[M\oplus E\atop k-1,0\right]_{E}$ and then $\left|\bigtriangleup(\mathcal{H})\right|=q^{k-1}\left[m\atop k-1\right]$ by Lemma~\ref{lemc21}. This completes the proof.
\end{proof}

\section{ Proof of Theorem~\ref{vsc}}

Erd\H{o}s, Frankl and F\"{u}redi \cite{two} gave a short proof of Theorem~\ref{sc} for cover-free families of finite sets. In this section, we adapt their argument to prove Theorem~\ref{vsc}. As we shall see, the problem becomes much more difficult when we deal with vector spaces rather than finite sets. %We first collect some definitions and facts that will be used in the proof of Theorem~\ref{vsc}.

As has been shown in Section 1, to prove Theorem~\ref{sc}, we only need to treat part (i). To be more precise, we need to establish the upper bound of a cover-free family with $k=2t$ and then characterize the structure of the maximum cover-free family.

Throughout this section we always suppose that $\mathcal{F}\subseteq \left[V\atop 2t\right]$ is a cover-free  family with ${\rm dim}(V)=n\geq 2t+1$. The following notation and definitions will be adopted in this section.
\begin{itemize}
   \item[$\bullet$]$\mathcal{F}$ denotes a cover-free family in $\left[V\atop 2t\right]$, where $n\geq 2t+1$.
  \item[$\bullet$] Denote $\mathcal{F}_{T}=\{F\in \mathcal{F}:T\leq F\}$ for any subspace $T\leq V$.
   \item[$\bullet$] If $|\mathcal{F}_{T}|=1$, $T\leq F\in \mathcal{F},$ we say that $T$ is a {\it private} subspace of $F$.
   \item[$\bullet$] For any $F\in \mathcal{F}$, let $M_{t}(F)=\left\{T\in \left[F\atop t\right]: |\mathcal{F}_{T}|=1\right\}$.
   \item[$\bullet$] For any $F\in \mathcal{F}$, let $N_{t}(F)=\left\{T\in\left[F\atop t\right]:|\mathcal{F}_{T}|>1\right\}$.
\end{itemize}

We give a simple but very useful criterion to judge private subspaces in a cover-free family.

\begin{lem}\label{r1}
For any $A, B\leq V$ and $A+B=F\in \mathcal{F}$, we have $|\mathcal{F}_{A}|=1$ or $|\mathcal{F}_{B}|=1$.
\end{lem}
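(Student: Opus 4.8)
The plan is to argue by contradiction directly from the definition of a cover-free family, using only the fact that every member of $\mathcal{F}$ has the same dimension $2t$. First I would record the trivial containments: since $A+B=F$, both $A\leq F$ and $B\leq F$, so $F\in \mathcal{F}_{A}$ and $F\in \mathcal{F}_{B}$. In particular $\mathcal{F}_{A}$ and $\mathcal{F}_{B}$ are both nonempty, and the assertion $|\mathcal{F}_{A}|=1$ or $|\mathcal{F}_{B}|=1$ is equivalent to ruling out the possibility that $|\mathcal{F}_{A}|\geq 2$ and $|\mathcal{F}_{B}|\geq 2$ hold simultaneously.

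So suppose, for a contradiction, that $|\mathcal{F}_{A}|\geq 2$ and $|\mathcal{F}_{B}|\geq 2$. Then I can pick $F_{1}\in \mathcal{F}_{A}$ with $F_{1}\neq F$; since $A\leq F_{1}$ and $A\leq F$, this yields $A\leq F\cap F_{1}$. The goal is now to produce a witness $F_{2}\in \mathcal{F}_{B}$ distinct from both $F$ and $F_{1}$, which would give $B\leq F\cap F_{2}$. Granted such a triple of pairwise distinct subspaces, the conclusion is immediate: from $A+B=F$ we obtain
\begin{equation*}
F=A+B\leq (F\cap F_{1})+(F\cap F_{2}),
\end{equation*}
which is precisely the forbidden configuration in the definition of a cover-free family (taking $F_{0}=F$), a contradiction.

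The only delicate point—and the step I expect to be the real (if modest) obstacle—is guaranteeing that $F$, $F_{1}$, $F_{2}$ are pairwise distinct, i.e.\ that an admissible $F_{2}$ actually exists. This is where the equal-dimension constraint enters. If no such $F_{2}$ could be found, then $\mathcal{F}_{B}\subseteq \{F,F_{1}\}$, and because $|\mathcal{F}_{B}|\geq 2$ this forces $\mathcal{F}_{B}=\{F,F_{1}\}$, so in particular $B\leq F_{1}$. Together with $A\leq F_{1}$ this gives $F=A+B\leq F_{1}$; but $\dim F=\dim F_{1}=2t$, whence $F=F_{1}$, contradicting the choice $F_{1}\neq F$. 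Thus an admissible $F_{2}$ always exists, the preceding paragraph applies, and the assumption $|\mathcal{F}_{A}|\geq 2$ and $|\mathcal{F}_{B}|\geq 2$ is untenable. Hence $|\mathcal{F}_{A}|=1$ or $|\mathcal{F}_{B}|=1$, as claimed.
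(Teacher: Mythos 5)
Your proof is correct and follows essentially the same route as the paper: assume both $|\mathcal{F}_{A}|>1$ and $|\mathcal{F}_{B}|>1$, extract witnesses $F_{1}\in\mathcal{F}_{A}$ and $F_{2}\in\mathcal{F}_{B}$ distinct from $F$, and derive the forbidden containment $F\leq F\cap F_{1}+F\cap F_{2}$. The only difference is that you explicitly check that $F_{2}$ can be chosen distinct from $F_{1}$ (via the equal-dimension argument forcing $F=F_{1}$ otherwise), a point the paper's one-line proof passes over in silence; this is extra care rather than a divergence.
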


\begin{proof}~Note that if $F=A+B$, then $F=F\cap A+F\cap B$. If $|\mathcal{F}_{A}|>1$ and $|\mathcal{F}_{B}|>1$, then there exist $F_{1}\in \mathcal{F}_{A}, F_{2}\in \mathcal{F}_{B}$ with $F_{1}, F_{2}\neq F$ such that $F=F\cap A+F\cap B\leq F\cap F_{1}+ F\cap F_{2}$, a contradiction.
\end{proof}

 \subsection{ The proof of inequality}

\begin{lem}\label{s}
For any $F\in \mathcal{F}$, we have
\begin{equation*}
\left|M_{t}(F)\right|\geq q^{t}\left[2t-1\atop t\right], \left|N_{t}(F)\right|\leq \left[2t-1\atop t-1\right].
\end{equation*}
Moreover, equalities hold if and only if $N_{t}(F)$ is an intersecting family of size $\left[2t-1\atop t-1\right]$.
\end{lem}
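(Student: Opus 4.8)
The plan is to prove the single inequality $|N_t(F)|\leq\left[2t-1\atop t-1\right]$ and deduce everything else from it. First I would record that the two displayed bounds are equivalent. Every $T\in\left[F\atop t\right]$ satisfies $F\in\mathcal{F}_T$ (since $T\leq F\in\mathcal{F}$), so $|\mathcal{F}_T|\geq 1$ and hence $M_t(F)$ and $N_t(F)$ partition $\left[F\atop t\right]$, giving $|M_t(F)|+|N_t(F)|=\left[2t\atop t\right]$. The $q$-Pascal identity $\left[2t\atop t\right]=\left[2t-1\atop t-1\right]+q^{t}\left[2t-1\atop t\right]$ then shows that $|N_t(F)|\leq\left[2t-1\atop t-1\right]$ is exactly equivalent to $|M_t(F)|\geq q^{t}\left[2t-1\atop t\right]$, with the two equalities holding simultaneously. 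I would also note the symmetry $\left[2t-1\atop t-1\right]=\left[2t-1\atop t\right]$, which will match the $\mathrm{EKR}$ bound below.

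The key step is to show that $N_t(F)$ is an intersecting family inside the $2t$-dimensional space $F$. Suppose $A,B\in N_t(F)$ with $A\cap B=\{\mathbf{0}\}$. Then $\dim A=\dim B=t$ and $A,B\leq F$ with $\dim F=2t$, so $\dim(A+B)=t+t-0=2t$ forces $A+B=F\in\mathcal{F}$. Applying Lemma~\ref{r1} to this decomposition gives $|\mathcal{F}_A|=1$ or $|\mathcal{F}_B|=1$, contradicting $A,B\in N_t(F)$. Therefore $\dim(A\cap B)\geq 1$ for all $A,B\in N_t(F)$; that is, $N_t(F)$ is intersecting in $\left[F\atop t\right]$. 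This is the crux of the argument: Lemma~\ref{r1} is precisely the device that converts the cover-free hypothesis into an intersecting condition.

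With $N_t(F)$ intersecting, I would apply Theorem~\ref{EKR} taking the ambient space to be $F$, so the relevant parameters are dimension $2t=2k$ with $k=t$. Since we are in the case $n=2k$, the bound reads $|N_t(F)|\leq\left[2t-1\atop t\right]=\left[2t-1\atop t-1\right]$, which together with the reduction in the first paragraph yields both displayed inequalities. For the equality characterization, the point is that $N_t(F)$ is \emph{always} intersecting by the previous step, so the statement ``$N_t(F)$ is an intersecting family of size $\left[2t-1\atop t-1\right]$'' is equivalent to the plain size condition $|N_t(F)|=\left[2t-1\atop t-1\right]$; and this in turn is exactly the condition for both equalities to hold, again via the partition identity. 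Hence the ``moreover'' clause follows immediately.

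I do not expect a genuine obstacle here: once the translation through Lemma~\ref{r1} is made, the Erd\H{o}s--Ko--Rado theorem for vector spaces and the symmetry of the Gaussian coefficients do all the work, and the equality part is essentially free because intersecting is automatic. The only thing to be careful about is invoking Theorem~\ref{EKR} correctly in the boundary regime $n=2k$ (so that the bound is $\left[2k-1\atop k\right]$ rather than $\left[n-1\atop k-1\right]$), and matching it to $\left[2t-1\atop t-1\right]$ via $\left[2t-1\atop t\right]=\left[2t-1\atop t-1\right]$.
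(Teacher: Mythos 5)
Your proposal is correct and follows essentially the same route as the paper: Lemma~\ref{r1} shows that two non-private $t$-subspaces of $F$ cannot span $F$, so $N_{t}(F)$ is intersecting, and the vector-space Erd\H{o}s--Ko--Rado theorem in the boundary case $n=2k$ gives $|N_{t}(F)|\leq\left[2t-1\atop t\right]=\left[2t-1\atop t-1\right]$, with the bound on $M_{t}(F)$ and the equality clause following from the partition $\left[2t\atop t\right]=\left[2t-1\atop t-1\right]+q^{t}\left[2t-1\atop t\right]$. The only cosmetic difference is that the paper handles $t=1$ by a separate direct count rather than through the EKR bound, but your uniform treatment is also valid there.
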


\begin{proof}~First we let $t=1$. If $N_{1}(F)=\emptyset$, then $\left|M_{1}(F)\right|=\left[2\atop 1\right]>q$. If $N_{1}(F)\neq \emptyset$, then there exists $A\in N_{1}(F)$. Since for any $B\in \left[F\atop 1\right]\backslash \{A\}$, we have $A\oplus B=F$ and then $|\mathcal{F}_{B}|=1$ by Lemma~\ref{r1}. It then follows that $\left|M_{1}(F)\right|=\left[2\atop 1\right]-1=q$. This proves the case of $t=1$.

Next we treat the case $t\geq 2$. For any $T, T'\in N_{t}(F)\subseteq \left[F\atop t\right]$, since $|\mathcal{F}_{T}|>1$ and $|\mathcal{F}_{T'}|>1$, $T\oplus T'=F$ is impossible by Lemma~\ref{r1}. Then we have that $N_{t}(F)$ is an intersecting family in $F$ and thus $|N_{t}(F)|\leq \left[2t-1\atop t-1\right]$ by Theorem~\ref{EKR}. Hence
\begin{equation*}
\left|M_{t}(F)\right|=\left[2t\atop t\right]-|N_{t}(F)|\geq q^{t}\left[2t-1\atop t\right].
\end{equation*}
If the equality holds, then $|N_{t}(F)|=\left[2t-1\atop t-1\right]$.

It is obvious that if $N_{t}(F)$ is an intersecting family and $\left|N_{t}(F)\right|=\left[2t-1\atop t-1\right]$, then $\left|M_{t}(F)\right|=q^{t}\left[2t-1\atop t\right]$.
\end{proof}

For any $F\in \mathcal{F}$, $T\in\left[F\atop t\right]$ and $t\geq 2$, fix a subspace $R\in \left[V\atop n-t\right]$ with $R\oplus T=V$ and define
\begin{equation*}
\mathcal{A}(F,T)=\left\{
\begin{array}{rl}A\in \left[V\atop t\right]:\!\!\!&{\rm dim}(A\cap T)=0, {\rm dim}(A\cap F)=t-1, \exists E\in \left[F\cap R\atop 1\right] \\[.3cm]
& \text {such that~} |\mathcal{F}_{T\oplus E}|>1 \text{~and~} F=T\oplus E\oplus A\cap F
\end{array}\right\}.
\end{equation*}
It is obvious that ${\dim}(F\cap R)=t$.

\begin{lem}\label{le20}
Let $t\geq 2$ and
\begin{equation}\label{620}
t_{0}=\min\left\{\left[t\atop 1\right], q^{3t-n-1}\left[n-2t+1\atop 1\right]\right\}.
\end{equation}
For any $A, T\in \left[V\atop t\right]$, we have
\begin{equation*}
\left|\left\{F\in \mathcal{F}_{T}:A\in \mathcal{A}(F,T)\right\}\right|\leq t_{0}.
\end{equation*}
\end{lem}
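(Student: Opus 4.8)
\emph{Setup and reductions.} We may assume $\dim(A\cap T)=0$, since the defining conditions of $\mathcal{A}(F,T)$ force this and otherwise the set is empty. Fix such $A,T$ and write $\mathcal{C}=\{F\in\mathcal{F}_{T}:A\in\mathcal{A}(F,T)\}$. For each $F\in\mathcal{C}$ set $D_{F}=A\cap F$; then $\dim D_{F}=t-1$, and since $D_{F}\le A$ with $A\cap T=\{\mathbf{0}\}$ we get $T\cap D_{F}=\{\mathbf{0}\}$, so $T\oplus D_{F}\le F$ has dimension $2t-1$ and $F=(T\oplus D_{F})\oplus E_{F}$ for the witnessing line $E_{F}\le F\cap R$. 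By the definition of $\mathcal{A}(F,T)$ we have $|\mathcal{F}_{T\oplus E_{F}}|>1$, so I fix $G_{F}\in\mathcal{F}$ with $G_{F}\neq F$ and $T\oplus E_{F}\le G_{F}$. The plan is to prove the two bounds $|\mathcal{C}|\le\left[t\atop1\right]$ and $|\mathcal{C}|\le q^{3t-n-1}\left[n-2t+1\atop1\right]$ separately and then take the minimum.

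\emph{The first bound.} I would show that $F\mapsto D_{F}$ is injective on $\mathcal{C}$; since $D_{F}$ ranges over the $\left[t\atop1\right]$ hyperplanes of $A$, this yields $|\mathcal{C}|\le\left[t\atop1\right]$. Suppose $F_{1},F_{2}\in\mathcal{C}$ are distinct with $D_{F_{1}}=D_{F_{2}}=D$. Both contain the $(2t-1)$-dimensional space $T\oplus D$, so $F_{1}\cap F_{2}=T\oplus D$. The witness $G:=G_{F_{1}}$ satisfies $E_{F_{1}}\le F_{1}\cap G$, and $E_{F_{1}}\not\le T\oplus D$ because $F_{1}=(T\oplus D)\oplus E_{F_{1}}$ is a direct sum; hence
\begin{equation*}
(F_{1}\cap F_{2})+(F_{1}\cap G)\supseteq(T\oplus D)+E_{F_{1}}=F_{1}.
\end{equation*}
Moreover $F_{1},F_{2},G$ are pairwise distinct: $F_{1}\neq F_{2}$ and $F_{1}\neq G$ by construction, while $G=F_{2}$ would force $E_{F_{1}}\le F_{1}\cap F_{2}=T\oplus D$, a contradiction. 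Thus $F_{1}\le(F_{1}\cap F_{2})+(F_{1}\cap G)$, contradicting that $\mathcal{F}$ is cover-free. This settles the first bound, which already delivers $t_{0}$ whenever $n\ge 3t-1$.

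\emph{The second bound.} This is needed only when $n\le 3t-2$, and I expect it to be the main difficulty. I would pass to the quotient $\overline{V}=V/T$ of dimension $n-t$; writing $\overline{X}=(X+T)/T$, the map $F\mapsto\overline{F}$ is a bijection of $\mathcal{F}_{T}$ onto $\overline{\mathcal{F}}\subseteq\left[\overline{V}\atop t\right]$, and because every subspace in sight contains $T$ the cover-free condition descends, so $\overline{\mathcal{F}}$ is itself cover-free in $\overline{V}$. Under this reduction $\dim(\overline{A}\cap\overline{F})=t-1$, the condition $A\cap(T\oplus E)=\{\mathbf{0}\}$ becomes $\overline{E}\not\le\overline{A}$, and $|\mathcal{F}_{T\oplus E}|>1$ becomes the statement that $\overline{E}$ lies in at least two members of $\overline{\mathcal{F}}$. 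So $|\mathcal{C}|$ is the number of $\overline{F}\in\overline{\mathcal{F}}$ meeting $\overline{A}$ in a hyperplane $\overline{D}=\overline{A}\cap\overline{F}$ through such a popular line, and by the first bound $\overline{F}\mapsto\overline{D}$ is injective. The target is to confine the occurring hyperplanes $\overline{D}$ of $\overline{A}$ to those avoiding a fixed $(n-2t+1)$-dimensional subspace; by Lemma~\ref{lemc21} the hyperplanes of $\overline{A}$ not containing a fixed $(n-2t+1)$-space number
\begin{equation*}
\left[t\atop1\right]-\left[3t-n-1\atop1\right]=q^{3t-n-1}\left[n-2t+1\atop1\right],
\end{equation*}
exactly the desired quantity.

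\emph{Main obstacle.} The clean part is the first bound: the injection $F\mapsto A\cap F$ together with a single witness produces a cover-free violation at once. The crux is the confinement step of the second bound — extracting, from cover-freeness of $\overline{\mathcal{F}}$ together with the popular-line/witness data (and the fact that for $n\le 3t-2$ the ambient space $\overline{V}$ is small enough that any two $t$-subspaces meet, since $2t>n-t$), the fixed $(n-2t+1)$-subspace of $\overline{A}$ that every occurring $\overline{D}$ must miss. Once both bounds are established, $|\mathcal{C}|\le\min\{\left[t\atop1\right],\,q^{3t-n-1}\left[n-2t+1\atop1\right]\}=t_{0}$ follows immediately.
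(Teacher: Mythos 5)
Your first bound is sound and coincides with the paper's: since $F=(A\cap F)+(T\oplus E)$ with $|\mathcal{F}_{T\oplus E}|>1$, the subspace $A\cap F$ is private to $F$ by Lemma~\ref{r1}, so $F\mapsto A\cap F$ is injective into the $\left[t\atop 1\right]$ hyperplanes of $A$; your direct verification of injectivity just re-proves that special case of Lemma~\ref{r1}. The genuine gap is the second bound, which you yourself flag as ``the crux'' and leave unargued --- and the target you set is not the right one. There is no reason why the hyperplanes $A\cap F$ occurring for $F\in\mathcal{C}$ should all fail to contain a \emph{fixed} $(n-2t+1)$-dimensional subspace of $A$; the numerical coincidence $\left[t\atop 1\right]-\left[3t-n-1\atop 1\right]=q^{3t-n-1}\left[n-2t+1\atop 1\right]$ does not point at that structure, and your proposal supplies no mechanism for producing such a subspace. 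The quotient by $T$ is harmless bookkeeping but contributes nothing toward this step.

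What the paper actually does is structurally different. Fix one $F_1\in\mathcal{C}$ with its witness line $E$ and choose $F_2\in\mathcal{F}_{T\oplus E}\setminus\{F_1\}$. When $n<3t-1$ one has $\dim(F_1\cap F_2)\geq 4t-n$; since $F_1=(A\cap F_1)\oplus(T\oplus E)$ and $T\oplus E\leq F_1\cap F_2$, Lemma~\ref{vv}(i) yields $\dim(A\cap F_1\cap F_2)\geq 3t-n-1\geq 1$. Now take Frankl's complement bijection (Lemma~\ref{lem}) $\psi:\left[A\atop 1\right]\rightarrow\left[A\atop t-1\right]$ with $G\oplus\psi(G)=A$. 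For each line $G\leq A\cap F_1\cap F_2$, the hyperplane $\psi(G)$ of $A$ is contained in \emph{no} member of $\mathcal{F}$: containment in $F_1$ or $F_2$ would put the $(2t+1)$-dimensional space $A\oplus(T\oplus E)$ inside a $2t$-dimensional space, and containment in some $F_3$ gives $F_1=G\oplus(\psi(G)\cap F_1)\oplus(T\oplus E)\leq F_1\cap F_3+F_1\cap F_2$, a cover-free violation. Since $\psi$ is injective, this rules out at least $\left[3t-n-1\atop 1\right]$ of the $\left[t\atop 1\right]$ candidates for $A\cap F$, giving exactly $q^{3t-n-1}\left[n-2t+1\atop 1\right]$. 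Note the excluded family is the $\psi$-image of the lines of $A\cap F_1\cap F_2$, which is in general \emph{not} the pencil of hyperplanes through a fixed subspace; so even if your confinement statement were provable, it is not the statement the argument needs, and as it stands your proof of the lemma is incomplete precisely where the difficulty lies.
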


\begin{proof}~
For any $A, T\in \left[V\atop t\right]$, if $A\in \mathcal{A}(F,T)$, then $F=T\oplus E\oplus A\cap F\in \mathcal{F}$ for some $E\in $$\left[F\atop 1\right]$ and $|\mathcal{F}_{T\oplus E}|>1$. We have $|\mathcal{F}_{A\cap F}|=1$ by Lemma~\ref{r1}. Thus the number of possibilities for $F$ is at most $\left[t\atop t-1\right]$ as ${\rm dim}(A\cap F)=t-1$. Note that $\left[t\atop 1\right]\leq q^{3t-n-1}\left[n-2t+1\atop 1\right]$ if and only if $n\geq 3t-1$. Thus the result holds if $n\geq 3t-1$.

In the following we let $n<3t-1$ and prove that
\begin{equation*}
\left|\left\{F\in \mathcal{F}_{T}:A\in \mathcal{A}(F,T)\right\}\right|\leq q^{3t-n-1}\left[n-2t+1\atop 1\right].
\end{equation*}
For any fixed $F_{1}\in \mathcal{F}$, $T\in\left[F_{1}\atop t\right]$ and $A\in \mathcal{A}(F_{1},T)$, there exists $E\in \left[F_{1}\cap R\atop 1\right]$ and $F_{2}\in \mathcal{F}_{T\oplus E}\big\backslash\{F_{1}\}$. %Suppose . Since $\mathcal{F}\subseteq \left[V\atop 2t\right]$ is a cover-free  family, $F+F'\neq V$. Otherwise, $\mathcal{F}=\{F,F'\}$.
Then
\begin{equation}\label{er}
{\dim}(F_{1}\cap F_{2})\geq 4t-n\geq t+2>{\dim}(T\oplus E).
\end{equation}
Since $T\oplus E\leq F_{1}\cap F_{2}$ and $F_{1}=A\cap F_{1}\oplus(T\oplus E)$, we have $F_{1}\cap F_{2}=A\cap F_{1}\cap F_{2}\oplus(T\oplus E)$ by Lemma~\ref{vv}(i), then
\begin{equation}\label{t}
{\dim}(A\cap F_{1}\cap F_{2})\overset{\eqref{er}}{\geq }4t-n-(t+1)=3t-n-1\geq 1
\end{equation}
as ${\dim}(T\oplus E)=t+1$.

By Lemma~\ref{lem}, there is a bijection
\begin{equation*}
\psi:\left[A\atop 1\right]\rightarrow \left[A\atop t-1\right]
\end{equation*}
such that $B\oplus \psi(B)=A$ holds for all $B\in \left[A\atop 1\right]$. For any $G\in \left[A\cap F_{1}\cap F_{2}\atop 1\right]$, it is obvious that $\psi(G)\nleq F_{i}$, $i=1,2$; otherwise, $\psi(G)\oplus G\oplus T\oplus E\leq F_{i}$, a contradiction. Meanwhile, $\psi(G)$ cannot be contained in a member $F_{3}\in \mathcal{F}\backslash \{F_{1}, F_{2}\}$; otherwise,
\begin{align}
F_{1}&=A\cap F_{1}\oplus(T\oplus E)\notag\\
&\leq (G\oplus \psi(G))\cap F_{1}+F_{1}\cap F_{2}\notag\\
&= G\oplus \psi(G)\cap F_{1}+F_{1}\cap F_{2}\tag{{\rm by~Lemma \ref{vv}(i)}}\\
&=\psi(G)\cap F_{1}+F_{1}\cap F_{2}\notag\\
&\leq F_{1}\cap F_{3}+F_{1}\cap F_{2},\notag
\end{align}
which is a contradiction to the definition of a cover-free family.

Because $|\mathcal{F}_{A\cap F}|=1$, $\psi$ is a bijection and by~\eqref{t}, the number of subspaces $F\in \mathcal{F}$ such that $A\in \mathcal{A}(F,T)$ is at most
\begin{equation*}
\begin{array}{rl}
\left|\left[A\atop t-1\right]\Big\backslash\left\{\psi(G):G\in \left[A\cap F_{1}\cap F_{2}\atop 1\right]\right\}\right|
\!\!\!\!&=\left[t\atop 1\right]-\left|\left[A\cap F_{1}\cap F_{2}\atop 1\right]\right|\\[.3cm]
&\leq \left[t\atop 1\right]-\left[3t-n-1\atop 1\right]\\[.3cm]
&=q^{3t-n-1}\left[n-2t+1\atop 1\right].
\end{array}
\end{equation*}
\end{proof}

\begin{lem}\label{2}
Let $t\geq 2$. For any $A\in \left[V\atop t\right]$, we have
\begin{equation*}
\left|\left\{(F, T):A\in \mathcal{A}(F,T), F\in \mathcal{F}, T\in\left[F\atop t\right]\right\}\right|\leq t_{0}q^{t(t-1)}\left[t+1\atop 1\right],
\end{equation*}
where $t_{0}$ is defined in~\eqref{620}.
\end{lem}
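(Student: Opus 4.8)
The plan is to bound the number $N$ of pairs in question by grouping them according to their first coordinate $F$. Call a subspace $F\in\mathcal{F}$ \emph{admissible} if $A\in\mathcal{A}(F,T)$ for at least one $T$, so that
$$N=\sum_{F\ \mathrm{admissible}}\left|\left\{T\in\left[F\atop t\right]:A\in\mathcal{A}(F,T)\right\}\right|.$$
It then suffices to prove two things: that each inner set has size at most $q^{t(t-1)}\left[t+1\atop 1\right]$, and that the number of admissible $F$ is at most $t_{0}$. Multiplying these two estimates gives exactly the claimed bound.

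For the inner estimate I fix an admissible $F$ and set $A'=A\cap F$, which has dimension $t-1$ by the definition of $\mathcal{A}(F,T)$. If $A\in\mathcal{A}(F,T)$ then $T\in\left[F\atop t\right]$ and $\dim(A\cap T)=0$; since $T\le F$ we have $A\cap T=A\cap F\cap T=A'\cap T$, so $\dim(A'\cap T)=0$. Hence every such $T$ is a $t$-dimensional subspace of the $2t$-dimensional space $F$ meeting the $(t-1)$-dimensional space $A'$ trivially, and by Lemma~\ref{lemc21} applied inside $F$ there are at most $q^{t(t-1)}\left[t+1\atop t\right]=q^{t(t-1)}\left[t+1\atop 1\right]$ of them.

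For the count of admissible $F$, the crucial observation is that $A\cap F$ is always private. Indeed, if $A\in\mathcal{A}(F,T)$ then $F=(T\oplus E)+(A\cap F)$ with $|\mathcal{F}_{T\oplus E}|>1$, so Lemma~\ref{r1} forces $|\mathcal{F}_{A\cap F}|=1$. Consequently the assignment $F\mapsto A\cap F$ is injective on admissible subspaces (a private subspace determines the unique member containing it), and its image lies in $\left[A\atop t-1\right]$; thus the number of admissible $F$ is at most $\left[t\atop t-1\right]=\left[t\atop 1\right]$. This already yields $N\le\left[t\atop 1\right]q^{t(t-1)}\left[t+1\atop 1\right]$, settling the case $n\ge 3t-1$ where $t_{0}=\left[t\atop 1\right]$.

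It remains to sharpen the count of admissible $F$ to $q^{3t-n-1}\left[n-2t+1\atop 1\right]$ when $n<3t-1$, and this is the main obstacle. I would reuse the exclusion mechanism from the proof of Lemma~\ref{le20}. If there is no admissible $F$ then $N=0$; otherwise fix one admissible $F_{1}$ together with a witness $T_{1}$, a line $E_{1}$, and a second member $F_{2}\in\mathcal{F}_{T_{1}\oplus E_{1}}\setminus\{F_{1}\}$. Since $n<3t-1$, the computation in~\eqref{er}--\eqref{t} gives $\dim(A\cap F_{1}\cap F_{2})\ge 3t-n-1\ge 1$. Using the bijection $\psi:\left[A\atop 1\right]\to\left[A\atop t-1\right]$ with $G\oplus\psi(G)=A$, the argument in Lemma~\ref{le20} shows that for every line $G\le A\cap F_{1}\cap F_{2}$ the hyperplane $\psi(G)$ of $A$ is contained in \emph{no} member of $\mathcal{F}$; in particular no such $\psi(G)$ can equal $A\cap F$ for any admissible $F$. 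As $\psi$ is injective, this removes at least $\left[3t-n-1\atop 1\right]$ of the $\left[t\atop 1\right]$ elements of $\left[A\atop t-1\right]$ from the image of $F\mapsto A\cap F$, so the number of admissible $F$ is at most $\left[t\atop 1\right]-\left[3t-n-1\atop 1\right]=q^{3t-n-1}\left[n-2t+1\atop 1\right]$. Combining the two cases shows there are at most $t_{0}$ admissible $F$, whence $N\le t_{0}q^{t(t-1)}\left[t+1\atop 1\right]$. The delicate point is precisely that the excluded hyperplanes $\psi(G)$ must be shown to avoid all members of $\mathcal{F}$ simultaneously, so that a single reference pair $(F_{1},F_{2})$ controls the intersections $A\cap F$ for every admissible $F$ at once, even though the witnessing $T$ varies with $F$.
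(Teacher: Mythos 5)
Your proof is correct and follows essentially the same route as the paper's: the paper's two-line argument likewise bounds the number of witnesses $T$ inside a fixed $F$ by $q^{t(t-1)}\left[t+1\atop 1\right]$ via Lemma~\ref{lemc21} and invokes Lemma~\ref{le20} for the factor $t_{0}$. Your extra care on the ``delicate point''---that the bound $t_{0}$ must apply to the set of all admissible $F$ at once rather than only to those sharing a fixed witness $T$, which requires reusing the exclusion mechanism from the \emph{proof} of Lemma~\ref{le20} (private hyperplanes $A\cap F$ determine $F$; the hyperplanes $\psi(G)$ lie in no member of $\mathcal{F}$) rather than merely citing its statement---is a legitimate refinement of a step the paper leaves implicit.
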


\begin{proof}~Since ${\rm dim}((A\cap F)\cap T)=0$ and $T\in \left[F\atop t\right]$, for fixed $A$ we have at most $q^{t(t-1)}\left[t+1\atop t\right]$ possibilities for $T$ by Lemma~\ref{lemc21}. Applying Lemma~\ref{le20} yields the conclusion.
\end{proof}

\begin{lem}\label{3}
Suppose $\mathcal{F}\subseteq \left[V\atop 2t\right]$ is a cover-free  family, $T\in\left[V\atop t\right]$ and $t\geq 2$. Then
\begin{equation*}
\sum\limits_{F\in \mathcal{F}_{T}}|\mathcal{A}(F,T)|\geq q^{t^{2}+1}\left[n-2t\atop 1\right]\left(\left[t\atop 1\right]\left|\mathcal{F}_{T}\right|-\left[n-t\atop 1\right]\right).
\end{equation*}
\end{lem}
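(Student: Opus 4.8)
The plan is to convert $|\mathcal{A}(F,T)|$ into a count of $(t-1)$-dimensional subspaces of $F$ and then double count. The first observation is that for $A\in\mathcal{A}(F,T)$ the subspace $D:=A\cap F$ is $(t-1)$-dimensional, meets $T$ trivially, and is a complement in $F$ of some non-private $(t+1)$-dimensional $U=T\oplus E$ with $T\le U\le F$ and $|\mathcal{F}_U|>1$; conversely, every $t$-dimensional $A$ with $A\cap F=D$ (for such a $D$) lies in $\mathcal{A}(F,T)$, since $A\cap T=A\cap F\cap T=D\cap T=\{\mathbf{0}\}$ follows from $F=T\oplus E\oplus D$. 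The number of $t$-dimensional $A$ with $A\cap F=D$ is the number of lines of $V/D$ avoiding $F/D$, namely $\left[n-t+1\atop 1\right]-\left[t+1\atop 1\right]=q^{t+1}\left[n-2t\atop 1\right]$. Hence $|\mathcal{A}(F,T)|=q^{t+1}\left[n-2t\atop 1\right]\,g_F$, where $g_F$ denotes the number of such ``good'' $D$.

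Next I would bound $g_F$ from below by $m_F$, the number of non-private $(t+1)$-dimensional $U$ with $T\le U\le F$. The idea is to double count the pairs $(U,D)$ with $U$ non-private and $U\oplus D=F$. Counting by $U$, each non-private $U$ has exactly $q^{(t+1)(t-1)}=q^{t^2-1}$ complements in $F$, giving $m_F\,q^{t^2-1}$ pairs. Counting by $D$, I pass to the quotient $F/T$: the $(t+1)$-dimensional $U$ with $T\le U\le F$ and $U\oplus D=F$ correspond to lines of $F/T$ avoiding the hyperplane $\overline D=(D\oplus T)/T$, so each fixed $D$ is complemented by at most $\left[t\atop 1\right]-\left[t-1\atop 1\right]=q^{t-1}$ such $U$. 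Thus $m_F\,q^{t^2-1}\le g_F\,q^{t-1}$, i.e. $g_F\ge m_F\,q^{t(t-1)}$, and therefore $|\mathcal{A}(F,T)|\ge q^{t^2+1}\left[n-2t\atop 1\right] m_F$.

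Finally I would sum over $F\in\mathcal{F}_T$ and double count $\sum_{F}m_F$. Reading the pairs $(F,U)$ with $F\in\mathcal{F}_T$, $T\le U\le F$, $\dim U=t+1$, $|\mathcal{F}_U|>1$ from the $U$ side gives $\sum_{F}m_F=\sum_{|\mathcal{F}_U|\ge 2}|\mathcal{F}_U|$. Since the full sum over all $(t+1)$-dimensional $U\supseteq T$ equals $\sum_U|\mathcal{F}_U|=\left[t\atop 1\right]|\mathcal{F}_T|$, and the number of \emph{private} such $U$ is at most the total count $\left[n-t\atop 1\right]$ of $(t+1)$-dimensional subspaces containing $T$, I obtain $\sum_{F}m_F\ge\left[t\atop 1\right]|\mathcal{F}_T|-\left[n-t\atop 1\right]$. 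Combining this with the previous estimate and using $q^{t+1}\cdot q^{t(t-1)}=q^{t^2+1}$ yields the claimed inequality; note it holds even when the right-hand factor is negative, as all cardinalities are nonnegative and the multiplier $q^{t(t-1)}$ is positive.

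The main obstacle is the middle step: a single $D$ can be the complement of many of the $U$'s, so $g_F$ is not simply proportional to the number of pairs $(U,D)$. The crux is the clean multiplicity bound $q^{t-1}$, obtained by working in $F/T$, which is precisely what makes the exponents $q^{t^2-1}$ and $q^{t-1}$ collapse to $q^{t(t-1)}$ and ultimately produce $q^{t^2+1}$. The remaining care is in verifying that the extension count $q^{t+1}\left[n-2t\atop 1\right]$ and the complement count $q^{t^2-1}$ are exact; notably, only the subspace combinatorics and the notion of private subspaces enter here, while the cover-free hypothesis itself is not needed for this particular bound.
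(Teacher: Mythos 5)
Your argument is correct and reaches the stated bound. Its skeleton coincides with the paper's: both proofs reduce $|\mathcal{A}(F,T)|$ to a count of $(t-1)$-dimensional subspaces $D=A\cap F$ that complement some non-private $(t+1)$-dimensional $U$ with $T\leq U\leq F$, multiply by the extension count $\left[n-t+1\atop 1\right]-\left[t+1\atop 1\right]=q^{t+1}\left[n-2t\atop 1\right]$, and close with the identical double count $\sum_{U}|\mathcal{F}_{U}|=\left[t\atop 1\right]|\mathcal{F}_{T}|$ minus the trivial bound $\left[n-t\atop 1\right]$ on the number of $U$'s (the paper phrases this with $E\in\left[R\atop 1\right]$ in place of $U=T\oplus E$, which is the same thing). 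Where you genuinely diverge is the middle estimate $g_{F}\geq q^{t(t-1)}m_{F}$. The paper obtains it constructively: it applies Frankl's complement bijection $\psi$ on $\left[F\cap R\atop 1\right]$ (Lemma~\ref{lem}) and the partition of Lemma~\ref{ca1} to exhibit, for each non-private $E$, an explicit family $\left[\psi(E)\oplus T\atop t-1,0\right]_{T}$ of $q^{t(t-1)}$ complements of $T\oplus E$ in $F$, and then checks via \eqref{130} and \eqref{131} that these families are pairwise disjoint. You instead double count the pairs $(U,D)$ with $U$ non-private and $U\oplus D=F$: each $U$ has exactly $q^{(t+1)(t-1)}=q^{t^{2}-1}$ complements, and each fixed $D$ complements at most $q^{t-1}=\left[t\atop 1\right]-\left[t-1\atop 1\right]$ of the subspaces $U$ with $T\leq U\leq F$ (the lines of $F/T$ off the hyperplane $(D\oplus T)/T$), whence $g_{F}\geq m_{F}\,q^{t^{2}-1}/q^{t-1}=m_{F}\,q^{t(t-1)}$. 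Your multiplicity bound is correct (indeed it is an equality when $D\cap T=\{\textbf{0}\}$, which holds for every good $D$), and your route dispenses with Lemmas~\ref{lem} and~\ref{ca1} here while also yielding the exact identity $|\mathcal{A}(F,T)|=q^{t+1}\left[n-2t\atop 1\right]g_{F}$; the paper's version buys an explicit description of a large subset of $\mathcal{A}(F,T)$, which it does not, however, exploit elsewhere. Both give the same constant $q^{t^{2}+1}$ and the same final inequality.
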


\begin{proof}~
Suppose $T\in\left[V\atop t\right]$, $F\in \mathcal{F}_{T}$, $R\in \left[V\atop n-t\right]$ and $R\oplus T=V$. For any $E\in\left[F\cap R\atop 1\right]$, there is a bijection
\begin{equation*}
\psi:\left[F\cap R\atop 1\right]\rightarrow \left[F\cap R\atop t-1\right]
\end{equation*}
such that $E\oplus \psi(E)=F\cap R$ by Lemma~\ref{lem}.

For two distinct $E, E'\in\left[F\cap R\atop 1\right]$ with $|\mathcal{F}_{T\oplus E}|>1$ and $|\mathcal{F}_{T\oplus E'}|>1$, we have $\psi(E)$, $\psi(E')\in \left[F\cap R\atop t-1\right]\subseteq \left[R\atop t-1\right]$ and then
\begin{equation}\label{130}
\left[\psi(E)\oplus T\atop t-1,0\right]_{T}\bigcap\left[\psi(E')\oplus T\atop t-1,0\right]_{T}=\emptyset
\end{equation}
by Lemma~\ref{ca1}(i).

We claim that
\begin{equation}\label{131}
{\dim}(B\cap (E\oplus T))=0
\end{equation}
for all $B\in \left[\psi(E)\oplus T\atop t-1,0\right]_{T}$. If ${\dim}(B\cap (E\oplus T))\neq 0$, then there exists $b\in B\cap (E\oplus T)$ and $b\neq \textup{\textbf{0}}$ such that
\begin{equation*}
 b=e+a,
\end{equation*}
where $e\in E$ and $a\in T$. Since $b\in B\in \left[\psi(E)\oplus T\atop t-1,0\right]_{T}$, we have
\begin{equation*}
b=c+d,
\end{equation*}
where $c\in \psi(E), d\in T$ and $c\neq \textup{\textbf{0}}$. Then $e+a=c+d$, consequently
\begin{equation*}
e-c=d-a\in (F\cap R)\cap T=\{\textup{\textbf{0}}\}.
\end{equation*}
Hence, $e=c\in E\cap \psi(E)=\{\textup{\textbf{0}}\}$, a contradiction.

Now we may produce some members of $\mathcal{A}(F,T)$ in three steps. Firstly, we choose $E\in\left[F\cap R\atop 1\right]$ such that $|\mathcal{F}_{T\oplus E}|>1$. Secondly, we choose $B\in \left[\psi(E)\oplus T\atop t-1,0\right]_{T}$. Finally, we form a subset
$$\mathcal{A}_{B}=\left\{A\in \left[V\atop t\right]\Big\backslash \left[F\atop t\right]:B\leq A\right\}\subseteq \mathcal{A}(F,T).$$
Applying~\eqref{130}, \eqref{131} and noting that $\mathcal{A}_{B}\cap \mathcal{A}_{B'}=\emptyset$ if the chosen $B$ and $B'$ are distinct, we do not produce any repeated element of $\mathcal{A}(F,T)$ by these steps and hence we have
\begin{equation*}
\begin{array}{rl}
|\mathcal{A}(F,T)|
\!\!\!\!&\geq\left|\left\{E\in\left[F\cap R\atop 1\right]: |\mathcal{F}_{T\oplus E}|>1\right\}\right|\cdot \left|\left[\psi(E)\oplus T\atop t-1,0\right]_{T}\right|\cdot\left|\left\{A\in\left[V\atop t\right]\big\backslash\left[F\atop t\right]: B\leq A\right\}\right|\\[.3cm]
&=\left|\left\{E\in\left[F\cap R\atop 1\right]:|\mathcal{F}_{T\oplus E}|>1\right\}\right|\cdot q^{t(t-1)}\cdot \left(\left[n-(t-1)\atop 1\right]-\left[2t-(t-1)\atop 1\right]\right)\\[.3cm]
&=q^{t^{2}+1}\left[n-2t\atop 1\right]\left|\left\{E\in\left[F\cap R\atop 1\right]: |\mathcal{F}_{T\oplus E}|>1\right\}\right|,
\end{array}
\end{equation*}
where $B\in \left[\psi(E)\oplus T\atop t-1,0\right]_{T}$ and the size of $\left[\psi(E)\oplus T\atop t-1,0\right]_{T}$ follows from Lemma~\ref{lemc21}.

Then
\begin{equation}\label{8}
\sum\limits_{F\in \mathcal{F}_{T}}|\mathcal{A}(F,T)|\geq q^{t^{2}+1}\left[n-2t\atop 1\right]\sum\limits_{F\in \mathcal{F}_{T}}\left|\left\{E\in\left[F\cap R\atop 1\right]:|\mathcal{F}_{T\oplus E}|>1\right\}\right|.
\end{equation}

For any $T\in \left[V\atop t\right]$, we calculate
\begin{equation*}
\left|\left\{(E, F):E\in\left[F\cap R\atop 1\right], F\in \mathcal{F}_{T}, |\mathcal{F}_{T\oplus E}|>1\right\}\right|
\end{equation*}
in two ways and get the following equation
\begin{equation}\label{7}
\begin{array}{rl}
\sum\limits_{F\in \mathcal{F}_{T}}\left|\left\{E\in\left[F\cap R\atop 1\right]: |\mathcal{F}_{T\oplus E}|>1\right\}\right|
\!\!\!&=\sum\limits_{E\in \left[R\atop 1\right]\atop \left|\mathcal{F}_{T\oplus E}\right|>1}\left|\left\{F\in \mathcal{F}_{T}:T\oplus E\leq F\right\}\right|\\[.3cm]
&=\sum\limits_{E\in \left[R\atop 1\right]\atop \left|\mathcal{F}_{T\oplus E}\right|>1}\left|\mathcal{F}_{T\oplus E}\right|\\[.3cm]
&\geq \sum\limits_{E\in\left[R\atop 1\right]}\left(\left|\mathcal{F}_{T\oplus E}\right|-1\right)\\[.3cm]
&=\sum\limits_{E\in\left[R\atop 1\right]}\left|\mathcal{F}_{T\oplus E}\right|-\left[n-t\atop 1\right].
\end{array}
\end{equation}

For any $T\in \left[V\atop t\right]$, we calculate
\begin{equation*}
\left|\left\{(E, F):E\in\left[F\cap R\atop 1\right], T\leq F, E\leq F \right\}\right|
\end{equation*}
in two ways and get the following equation
\begin{equation}\label{9}
\sum\limits_{E\in\left[R\atop 1\right]}\left|\mathcal{F}_{T\oplus E}\right|=\sum\limits_{F\in\mathcal{F}_{T}}\left|\left\{E:E\in\left[F\cap R\atop 1\right]\right\}\right|=\left[t\atop 1\right]\left|\mathcal{F}_{T}\right|.
\end{equation}
Hence, the result holds by~\eqref{8},~\eqref{7} and~\eqref{9}.
\end{proof}

For every pair $(F,T)$, where $F\in \mathcal{F}$, $T\in \left[F\atop t\right]$ and $t\geq 1$, we define a nonnegative weight function on $\left[V\atop t\right]$, that is, $\omega_{(F,T)}:\left[V\atop t\right] \rightarrow \mathbb{R}$ as follows.

If $ |\mathcal{F}_{T}|=1$, then let
\begin{equation*}
\omega_{(F,T)}(A)\!=\!
\begin{cases}
1, &\text{if $A=T$},\\[.3cm]
0, &\text{otherwise}.
\end{cases}\
\end{equation*}

If $1<|\mathcal{F}_{T}|\leq\frac{q^{n-t}-1}{q^{t}-1}$, then let
\begin{equation*}
\omega_{(F,T)}(A)\!=\!
\begin{cases}
\frac{q^{t}-1}{q^{n-t}-1}, &\text{if $A=T$},\\[.3cm]
0, &\text{otherwise}.
\end{cases}\
\end{equation*}

If $|\mathcal{F}_{T}|>\frac{q^{n-t}-1}{q^{t}-1}$, then let
\begin{equation*}
\omega_{(F,T)}(A)\!=\!
\begin{cases}
\frac{1}{|\mathcal{F}_{T}|}, &\text{if $A=T$},\\[.3cm]
\frac{1}{q^{t(t-1)}\left[t+1\atop 1\right]t_{0}}, &\text{if $A\in \mathcal{A}(F',T)$} \text{~for some~} F'\in \mathcal{F}_{T},\\[.3cm]
0,&\text{otherwise},
\end{cases}\
\end{equation*}
where $t_{0}$ is defined in~\eqref{620}.
\begin{lem}\label{4}
Suppose $T\in \left[V\atop t\right]$ and $F\in \mathcal{F}_{T}$. Then
\begin{equation*}
\sum\limits_{A\in \left[V\atop t\right]}\omega_{(F,T)}(A)\geq \frac{q^{t}-1}{q^{n-t}-1}.
\end{equation*}
Moreover, equality holds if and only if $1<|\mathcal{F}_{T}|\leq\frac{q^{n-t}-1}{q^{t}-1}$.
\end{lem}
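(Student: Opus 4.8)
The plan is to evaluate $\sum_{A\in\left[V\atop t\right]}\omega_{(F,T)}(A)$ in each of the three regimes appearing in the definition of $\omega_{(F,T)}$, observing that the value depends on $(F,T)$ only through $N:=|\mathcal{F}_T|$. If $N=1$, only $A=T$ is charged and the sum is $1$; since $n\geq 2t+1$ forces $q^{n-t}-1>q^{t}-1$, i.e. $\frac{q^{t}-1}{q^{n-t}-1}<1$, the bound holds \emph{strictly}. If $1<N\leq\frac{q^{n-t}-1}{q^{t}-1}$, again only $A=T$ is charged, now with weight $\frac{q^{t}-1}{q^{n-t}-1}$, so the sum equals $\frac{q^{t}-1}{q^{n-t}-1}$ and equality holds. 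Hence the ``moreover'' clause will follow once the third regime is shown to give strict inequality.

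The substance lies in the regime $N>N_0$, where I abbreviate $N_0:=\frac{q^{n-t}-1}{q^{t}-1}$, so that the target equals $\tfrac1{N_0}$. Now $A=T$ contributes only $\tfrac1N<\tfrac1{N_0}$, and the deficit must be made up by the subspaces in $\bigcup_{F'\in\mathcal{F}_T}\mathcal{A}(F',T)$, each charged $\frac{1}{q^{t(t-1)}\left[t+1\atop 1\right]t_{0}}$; these are all distinct from $T$ because every member of $\mathcal{A}(F',T)$ meets $T$ trivially. Thus the sum equals $\tfrac1N$ plus that constant times $\bigl|\bigcup_{F'\in\mathcal{F}_T}\mathcal{A}(F',T)\bigr|$, and the whole statement reduces to a lower bound on the size of this union. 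I would produce one by combining Lemma~\ref{3}, which bounds $\sum_{F'\in\mathcal{F}_T}|\mathcal{A}(F',T)|$ from below by $q^{t^{2}+1}\left[n-2t\atop 1\right]\bigl(\left[t\atop 1\right]N-\left[n-t\atop 1\right]\bigr)$, with the multiplicity bound of Lemma~\ref{le20}, by which each fixed $A$ lies in $\mathcal{A}(F',T)$ for at most $t_{0}$ members $F'$.

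Substituting and simplifying (using $q^{t^{2}+1}/q^{t(t-1)}=q^{t+1}$ and $\left[t\atop 1\right]N-\left[n-t\atop 1\right]=\left[t\atop 1\right](N-N_0)$) collapses everything to a one-variable inequality of the form $\tfrac1N+c\,(N-N_0)\geq\tfrac1{N_0}$ for all integers $N>N_0$, with $c>0$ an explicit ratio of Gaussian coefficients and $t_{0}$. I expect this final step to be the main obstacle. Because $t_{0}=\min\{\left[t\atop 1\right],\,q^{3t-n-1}\left[n-2t+1\atop 1\right]\}$ is a minimum, the verification must split into the ranges $n\geq 3t-1$ and $n<3t-1$, where $t_0$ takes its two different values; and one must be careful that the lower bound used for the union is strong enough, since dividing the Lemma~\ref{3} estimate by $t_0$ is potentially lossy. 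The key is to exploit the precise interplay among $t_0$, $\left[n-2t\atop 1\right]$, $\left[n-t\atop 1\right]$ and $N_0$ so that $c$ stays large enough across all admissible $N$ and $q$; once this holds, the strict inequality $N>N_0$ passes through to a strict inequality in the conclusion, confining equality to the middle regime as required.
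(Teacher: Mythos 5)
Your three-regime analysis, the reduction to the regime $|\mathcal{F}_T|>\frac{q^{n-t}-1}{q^{t}-1}$, and the double count combining Lemma~\ref{3} with a multiplicity bound are exactly the paper's strategy, but the multiplicity bound you propose is too weak: the one-variable inequality you reduce to is actually \emph{false} for some admissible parameters. You bound $\bigl|\bigcup_{F'\in\mathcal{F}_T}\mathcal{A}(F',T)\bigr|$ below by $\frac{1}{t_0}\sum_{F'}|\mathcal{A}(F',T)|$, which yields $c=\frac{q^{t+1}\left[n-2t\atop 1\right]\left[t\atop 1\right]}{\left[t+1\atop 1\right]t_0^{2}}$ and the requirement $c\geq\frac{1}{NN_0}$ for every integer $N>N_0$. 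Take $q=2$, $t=5$, $n=11$, $N=3$ (so $N_0=63/31$ and $N>N_0$ is admissible): here $n<3t-1$, so $t_0=q^{3t-n-1}\left[n-2t+1\atop 1\right]=24$, and one computes $c=\frac{64\cdot 31}{63\cdot 576}\approx 0.055$ while $\frac{1}{NN_0}=\frac{31}{189}\approx 0.164$; your lower bound for the sum comes to about $0.386$, short of the target $\frac{1}{N_0}\approx 0.492$. The paper avoids this by sharpening the multiplicity to $d_0=\min\left\{\left[t\atop 1\right],\,|\mathcal{F}_T|,\,q^{3t-n-1}\left[n-2t+1\atop 1\right]\right\}$: the extra, trivial observation that each $A$ lies in $\mathcal{A}(F',T)$ for at most $|\mathcal{F}_T|$ subspaces $F'$ is exactly what rescues the estimate when $N$ is small and $t_0$ is large, as above. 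With $d_0$ in place the algebra closes: after checking $\frac{q^{t+1}\left[n-2t\atop 1\right]}{t_0}>1$ in the two ranges you identified, the surplus over $\frac{q^{t}-1}{q^{n-t}-1}$ equals $\frac{(d(q^{n-t}-1)-(q^{t+1}-1)d_0)(d(q^{t}-1)-(q^{n-t}-1))}{d(q^{n-t}-1)(q^{t+1}-1)d_0}$, which is nonnegative because $d_0\leq d$ and $n\geq 2t+1$, while the second factor is a positive integer.

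Two smaller omissions. First, you never actually verify the final inequality (you flag it as ``the main obstacle''), so even with the corrected multiplicity your argument is a plan rather than a proof. Second, for $t=1$ the third regime must be excluded separately, since $\mathcal{A}(F,T)$ and Lemmas~\ref{le20} and~\ref{3} are only available for $t\geq 2$; the paper notes that $|\mathcal{F}_T|>\frac{q^{n-1}-1}{q-1}=\left[n-1\atop 1\right]$ is impossible for $\mathcal{F}\subseteq\left[V\atop 2\right]$, so that regime is vacuous when $t=1$.
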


\begin{proof}~
Suppose $1\leq |\mathcal{F}_{T}|\leq\frac{q^{n-t}-1}{q^{t}-1}$. It is obvious that the inequality holds.

Suppose $|\mathcal{F}_{T}|>\frac{q^{n-t}-1}{q^{t}-1}$. We need to prove the strict inequality. When $t=1$, clearly $\mathcal{F}\subseteq \left[V\atop 2\right]$ and $|\mathcal{F}_{T}|>\frac{q^{n-1}-1}{q-1}=\left[n-1\atop 1\right]$ is impossible. Next we let $t\geq 2$. For any $T\in \left[V\atop t\right]$ and $|\mathcal{F}_{T}|>\frac{q^{n-t}-1}{q^{t}-1}$, we calculate
\begin{equation*}
\left|\left\{(A, F):A\in\mathcal{A}(F,T), F\in\mathcal{F}_{T}\right\}\right|
\end{equation*}
in two ways and get the following equation
\begin{equation}\label{11}
\left|\left\{A:A\in \bigcup\limits_{F\in \mathcal{F}_{T}}\mathcal{A}(F,T)\right\}\right|\cdot\left|\left\{F\in \mathcal{F}_{T}:A\in \mathcal{A}(F,T)\right\}\right|=\sum\limits_{F\in \mathcal{F}_{T}}|\mathcal{A}(F,T)|.
\end{equation}
For brevity we set $|\mathcal{F}_{T}|=d$ and $d_{0}=\min\left\{\left[t\atop 1\right], d, q^{3t-n-1}\left[n-2t+1\atop 1\right]\right\}$. It is obvious that
\begin{equation}\label{ew12}
\left|\left\{F\in \mathcal{F}_{T}:A\in \mathcal{A}(F,T)\right\}\right|\leq d_{0}
\end{equation}
by Lemma~\ref{le20}.
Then
\begin{align}
\sum\limits_{A\in \left[V\atop t\right]}\omega_{(F,T)}(A)
=&\frac{1}{d}+\frac{1}{q^{t(t-1)}\left[t+1\atop 1\right]t_{0}}\left|\left\{A:A\in \bigcup\limits_{F\in \mathcal{F}_{T}}\mathcal{A}(F,T)\right\}\right|\notag\\
\geq&\frac{1}{d}+\frac{1}{q^{t(t-1)}\left[t+1\atop 1\right]t_{0}d_{0}}\sum\limits_{F\in \mathcal{F}_{T}}|\mathcal{A}(F,T)|\tag{\text{by}~\eqref{11},~\eqref{ew12}}\\
\geq&\frac{1}{d}+\frac{q^{t^{2}+1}\left[n-2t\atop 1\right]}{q^{t(t-1)}\left[t+1\atop 1\right]t_{0}d_{0}}\left(\left[t\atop 1\right]d-\left[n-t\atop 1\right]\right)\tag{\text{by~Lemma}~\ref{3}}\\
=&\frac{1}{d}+\frac{q^{t+1}\left[n-2t\atop 1\right]}{\left[t+1\atop 1\right]t_{0}d_{0}}\left(\left[t\atop 1\right]d-\left[n-t\atop 1\right]\right).\notag
\end{align}

%We prove that
%\begin{equation*}
%\frac{q^{t+1}\left[n-2t\atop 1\right]}{t_{0}}>1.
%\end{equation*}
If $n\geq 3t-1$, then
\begin{equation*}
\frac{q^{t+1}\left[n-2t\atop 1\right]}{t_{0}}=\frac{q^{t+1}\left[n-2t\atop 1\right]}{\left[t\atop 1\right]}>1.
\end{equation*}
If $n<3t-1$, then
\begin{equation*}
\frac{q^{t+1}\left[n-2t\atop 1\right]}{t_{0}}=\frac{q^{t+1}\left[n-2t\atop 1\right]}{q^{3t-n-1}\left[n-2t+1\atop 1\right]}=\frac{q^{n-2t+2}\left[n-2t\atop 1\right]}{\left[n-2t+1\atop 1\right]}\geq\frac{q^{3}\left[n-2t\atop 1\right]}{\left[n-2t+1\atop 1\right]}>1
\end{equation*}
as $n\geq 2t+1$. Hence,
\begin{equation*}
\begin{array}{rl}
\sum\limits_{A\in \left[V\atop t\right]}\omega_{(F,T)}(A)
\!\!\!\!&>\frac{1}{d}+\frac{\left[t\atop 1\right]d-\left[n-t\atop 1\right]}{\left[t+1\atop 1\right]d_{0}}\\[.3cm]
&=\frac{q^{t}-1}{q^{n-t}-1}-\frac{d(q^{t}-1)-(q^{n-t}-1)}{d(q^{n-t}-1)}+\frac{\left[t\atop 1\right]d-\left[n-t\atop 1\right]}{\left[t+1\atop 1\right]d_{0}}\\[.3cm]
&=\frac{q^{t}-1}{q^{n-t}-1}+\frac{(d(q^{n-t}-1)-(q^{t+1}-1)d_{0})(d(q^{t}-1)-(q^{n-t}-1))}{d(q^{n-t}-1)(q^{t+1}-1)d_{0}}\\[.3cm]
&>\frac{q^{t}-1}{q^{n-t}-1}+\frac{d(q^{n-t}-1)-(q^{t+1}-1)d_{0}}{d(q^{n-t}-1)(q^{t+1}-1)d_{0}}\\[.3cm]
&\geq \frac{q^{t}-1}{q^{n-t}-1}
\end{array}
\end{equation*}
by noting $d=|\mathcal{F}_{T}|>\frac{q^{n-t}-1}{q^{t}-1}$, $d\geq d_{0}$ and $n\geq 2t+1$.
\end{proof}

\noindent\emph{{\textbf{Proof of Theorem~$\rm\ref{vsc}(i)$ for inequality.}}}
For any $A\in \left[V\atop t\right]$, we have
\begin{equation}\label{eq11}
\sum\limits_{T\in \left[V\atop t\right] \atop F\in \mathcal{F}_{T}}\omega_{(F,T)}(A)\leq 1
\end{equation}
by Lemma~\ref{2} and the definition of weight function. Then
\begin{equation}\label{12}
\sum\limits_{A\in \left[V\atop t\right]}\sum\limits_{T\in \left[V\atop t\right] \atop F\in \mathcal{F}_{T}}\omega_{(F,T)}(A)\leq \left[n\atop t\right].
\end{equation}

For any $F\in \mathcal{F}$, by Lemmas~\ref{s} and \ref{4}, we have
\begin{equation}\label{13}
\begin{array}{rl}
%\begin{align}
\sum\limits_{T\in \left[F\atop t\right]}\sum\limits_{A\in \left[V\atop t\right]}\omega_{(F,T)}(A)
\!\!\!&=\sum\limits_{T\in M_{t}(F)}\sum\limits_{A\in \left[V\atop t\right]}\omega_{(F,T)}(A)+\sum\limits_{T\in N_{t}(F)}\sum\limits_{A\in \left[V\atop t\right]}\omega_{(F,T)}(A)\\[.3cm]
&=\left|M_{t}(F)\right|\cdot 1+\left|N_{t}(F)\right|\cdot\sum\limits_{A\in \left[V\atop t\right]}\omega_{(F,T)}(A)\\[.3cm]
&\geq q^{t}\left[2t-1\atop t\right]\cdot 1+\left[2t-1\atop t-1\right]\cdot\frac{q^{t}-1}{q^{n-t}-1}~~~~~~~~({\rm noting}~\frac{q^{t}-1}{q^{n-t}-1}<1)\\[.3cm]
&=\frac{q^{n}-1}{q^{n-t}-1}\left[2t-1\atop t\right].
%\end{align}
\end{array}
\end{equation}
By~\eqref{12} and \eqref{13},
\begin{equation*}
\begin{array}{rl}
|\mathcal{F}|\frac{q^{n}-1}{q^{n-t}-1}\left[2t-1\atop t\right]
\!\!\!&\leq\sum\limits_{F\in \mathcal{F}\atop T\in \left[F\atop t\right]}\sum\limits_{A\in \left[V\atop t\right]}\omega_{(F,T)}(A)\\[.3cm]
&=\sum\limits_{A\in \left[V\atop t\right]}\sum\limits_{T\in \left[V\atop t\right] \atop F\in \mathcal{F}_{T}}\omega_{(F,T)}(A)\\[.3cm]
&\leq \left[n\atop t\right].
\end{array}
\end{equation*}
It then follows that
\begin{equation*}
|\mathcal{F}|\leq \frac{\left[n-1\atop t\right]}{\left[2t-1\atop t\right]}.
\end{equation*}

\qed

\subsection{ The case of equality}
We now characterize the case of equality of Theorem~\ref{vsc}(i). In this subsection, we always let $\mathcal{F}\subseteq \left[V\atop 2t\right]$ be a cover-free family, $2t\leq n-1$ and $|\mathcal{F}|=\frac{\left[n-1\atop t\right]}{\left[2t-1\atop t\right]}$. Let
\begin{equation}\label{45}
\Omega=\left\{A\in \left[V\atop t\right]:|\mathcal{F}_{A}|=1\right\},
\end{equation}
\begin{equation}\label{e45}
\Gamma=\left\{A\in \left[V\atop t\right]:|\mathcal{F}_{A}|=\frac{q^{n-t}-1}{q^{t}-1}\right\}.
\end{equation}
Note that $q^{t}-1\big|q^{n-t}-1$ if an $S_{q}(t, 2t-1, n-1)$ exists.

\begin{lem}\label{l1}The following hold.
\begin{itemize}
\item[\rm(i)] For any $F\in \mathcal{F}$, $\left|M_{t}(F)\right|=q^{t}\left[2t-1\atop t\right]$ and $\left|N_{t}(F)\right|=\left[2t-1\atop t-1\right]$.
\item[\rm(ii)]For any $A\in \left[V\atop t\right]$, $|\mathcal{F}_{A}|$ is either $1$ or $\frac{q^{n-t}-1}{q^{t}-1}$.
\item[\rm(iii)]$\Omega=\biguplus\limits_{F\in \mathcal{F}}M_{t}(F), \Gamma=\bigcup\limits_{F\in \mathcal{F}}N_{t}(F)$.
\item[\rm(iv)]
$|\Omega|=q^{t}\left[n-1\atop t\right], |\Gamma|=\left[n-1\atop t-1\right]$.
\end{itemize}
\end{lem}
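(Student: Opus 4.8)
The plan is to revisit the chain of inequalities that produced the bound $|\mathcal{F}|\le\frac{\left[n-1\atop t\right]}{\left[2t-1\atop t\right]}$ and extract the equality conditions forced by $|\mathcal{F}|=\frac{\left[n-1\atop t\right]}{\left[2t-1\atop t\right]}$. Since this value gives $|\mathcal{F}|\cdot\frac{q^{n}-1}{q^{n-t}-1}\left[2t-1\atop t\right]=\left[n\atop t\right]$, the two ends of the chain coincide, so both \eqref{12} and the per-$F$ estimate \eqref{13} hold with equality. First I would unpack equality in \eqref{13} for a fixed $F$: writing $\sum_{T\in\left[F\atop t\right]}\sum_A\omega_{(F,T)}(A)=|M_t(F)|+\sum_{T\in N_t(F)}\sum_A\omega_{(F,T)}(A)$ and using $|M_t(F)|+|N_t(F)|=\left[2t\atop t\right]$ together with $\sum_A\omega_{(F,T)}(A)\ge\frac{q^t-1}{q^{n-t}-1}$ from Lemma~\ref{4}, the sum is at least $\left[2t\atop t\right]-|N_t(F)|\bigl(1-\tfrac{q^t-1}{q^{n-t}-1}\bigr)$, which by Lemma~\ref{s} and $n\ge 2t+1$ is at least $\frac{q^n-1}{q^{n-t}-1}\left[2t-1\atop t\right]$. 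Equality therefore forces simultaneously $|N_t(F)|=\left[2t-1\atop t-1\right]$ and hence $|M_t(F)|=q^t\left[2t-1\atop t\right]$, which is part (i), and equality in Lemma~\ref{4} for every $T\in N_t(F)$, namely $1<|\mathcal{F}_T|\le\frac{q^{n-t}-1}{q^t-1}$.

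The crucial consequence I would then draw is that the third branch of the weight function never occurs. Every $T$ with $|\mathcal{F}_T|>1$ lies in $N_t(F)$ for any $F\in\mathcal{F}_T$, so the previous paragraph gives $|\mathcal{F}_T|\le\frac{q^{n-t}-1}{q^t-1}$ for all such $T$. Hence $\omega_{(F,T)}(A)$ is supported only on $A=T$, and for every $A\in\left[V\atop t\right]$ the left side of \eqref{eq11} collapses to $\sum_{F\in\mathcal{F}_A}\omega_{(F,A)}(A)$. Equality in \eqref{12} forces this to equal $1$ for every $A$. If $|\mathcal{F}_A|=1$ the value is automatically $1$; if $1<|\mathcal{F}_A|$ the value is $|\mathcal{F}_A|\cdot\frac{q^t-1}{q^{n-t}-1}$, equal to $1$ exactly when $|\mathcal{F}_A|=\frac{q^{n-t}-1}{q^t-1}$; and $|\mathcal{F}_A|=0$ is excluded since it would give value $0$. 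This is part (ii).

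Parts (iii) and (iv) are bookkeeping on top of (i) and (ii). By part (ii) every $A\in\left[V\atop t\right]$ satisfies $|\mathcal{F}_A|\in\{1,\frac{q^{n-t}-1}{q^t-1}\}$, so \eqref{45} and \eqref{e45} read $\Omega=\{A:|\mathcal{F}_A|=1\}$ and $\Gamma=\{A:|\mathcal{F}_A|>1\}$. For (iii), each $A\in\Omega$ lies in $M_t(F)$ for the unique $F\in\mathcal{F}_A$, giving the disjoint union $\Omega=\biguplus_{F}M_t(F)$, while $\bigcup_F N_t(F)$ is precisely the set of $A$ with $|\mathcal{F}_A|>1$, i.e.\ $\Gamma$ (this second union is not disjoint). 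For (iv), the disjoint union yields $|\Omega|=\sum_F|M_t(F)|=|\mathcal{F}|\,q^t\left[2t-1\atop t\right]=q^t\left[n-1\atop t\right]$; and double-counting the pairs $(F,T)$ with $T\in N_t(F)$ gives $|\mathcal{F}|\left[2t-1\atop t-1\right]=\sum_F|N_t(F)|=\sum_{T\in\Gamma}|\mathcal{F}_T|=|\Gamma|\cdot\frac{q^{n-t}-1}{q^t-1}$, which after using $\left[2t-1\atop t\right]=\left[2t-1\atop t-1\right]$ and $\left[n-1\atop t\right]=\left[n-1\atop t-1\right]\frac{q^{n-t}-1}{q^t-1}$ simplifies to $|\Gamma|=\left[n-1\atop t-1\right]$. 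I expect the second paragraph to be the main obstacle: the nontrivial point is recognizing that the equality conditions coming from \eqref{13} eliminate the third weight branch altogether, which is exactly what decouples the double sum in \eqref{eq11} and lets one pin down $|\mathcal{F}_A|$ precisely instead of merely bounding it.
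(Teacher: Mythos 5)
Your proposal is correct and takes essentially the same route as the paper: both extract the equality conditions from the two inequalities \eqref{12} and \eqref{13} in the upper-bound proof, obtaining (i) from per-$F$ equality in \eqref{13} together with Lemmas~\ref{s} and~\ref{4}, and (ii) from equality in \eqref{eq11} once the third branch of the weight function is ruled out. Your only departures are cosmetic — you spell out why equality in \eqref{13} simultaneously forces $|N_{t}(F)|=\left[2t-1\atop t-1\right]$ and equality in Lemma~\ref{4}, explicitly exclude $|\mathcal{F}_{A}|=0$, and compute $|\Gamma|$ by double counting rather than by complementation in $\left[V\atop t\right]$.
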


\begin{proof}~(i) Since $|\mathcal{F}|=\frac{\left[n-1\atop t\right]}{\left[2t-1\atop t\right]}$, we have equality for every $F\in \mathcal{F}$ in $\eqref{13}$, and thus $F$ has exactly $q^{t}\left[2t-1\atop t\right]$ private $t$-dimensional subspaces and $\left[2t-1\atop t-1\right]$ non-private $t$-dimensional subspaces $T$ for which $1<|\mathcal{F}_{T}|\leq$$\frac{q^{n-t}-1}{q^{t}-1}$ by Lemma~\ref{4}.

(ii) Moreover, we must have equality in~\eqref{eq11} as well, yielding that for any $A\in \left[V\atop t\right]$,
\begin{equation*}
1=\sum\limits_{T\in \left[V\atop t\right] \atop F\in \mathcal{F}_{T}}\omega_{(F,T)}(A)=\sum\limits_{\atop F\in \mathcal{F}_{A}}\omega_{(F,A)}(A)\!=\!
\begin{cases}
|\mathcal{F}_{A}|, &\text{if $|\mathcal{F}_{A}|=1$},\\[.3cm]
\frac{q^{t}-1}{q^{n-t}-1}|\mathcal{F}_{A}|, &\text{if $1<|\mathcal{F}_{A}|\leq\frac{q^{n-t}-1}{q^{t}-1}$}.
\end{cases}\
\end{equation*}
 Hence, $|\mathcal{F}_{A}|$ is either 1 or $\frac{q^{n-t}-1}{q^{t}-1}$ for any $A\in \left[V\atop t\right]$. %As a result, $\Omega\uplus \Gamma=\left[V\atop t\right]$.

(iii) We have $\Omega\uplus \Gamma=\left[V\atop t\right]$ by (ii). It is obvious that $\Omega=\biguplus\limits_{F\in \mathcal{F}}M_{t}(F)$ by \eqref{45} and $\Gamma=\bigcup\limits_{F\in \mathcal{F}}N_{t}(F)$  by \eqref{e45}.

(iv) By (i) and (iii), we have
 \begin{equation*}
 |\Omega|=q^{t}\left[2t-1\atop t\right]|\mathcal{F}|=q^{t}\left[2t-1\atop t\right]\cdot\frac{\left[n-1\atop t\right]}{\left[2t-1\atop t\right]}=q^{t}\left[n-1\atop t\right].
\end{equation*}
Then
\begin{equation*}
|\Gamma|=\left[V\atop t\right]-|\Omega|=\left[n-1\atop t-1\right].
\end{equation*}
\end{proof}

\begin{cor}\label{c1}
For all $F, F'\in \mathcal{F}$ with $F\neq F'$, we have $\rm{dim}$$(F\cap F')\leq t$ .
\end{cor}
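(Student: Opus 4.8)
The plan is to argue by contradiction: suppose $F\neq F'$ with $\dim(F\cap F')\geq t+1$, and set $I=F\cap F'$. The first step is to note that every $t$-subspace $T\leq I$ lies in both $F$ and $F'$, so $|\mathcal F_{T}|\geq 2$ and hence $T\in N_{t}(F)$; that is, $\left[I\atop t\right]\subseteq N_{t}(F)$, and symmetrically $\left[I\atop t\right]\subseteq N_{t}(F')$. By Lemma~\ref{l1}(i) together with the equality case of Lemma~\ref{s}, $N_{t}(F)$ is an intersecting family of the maximum size $\left[2t-1\atop t-1\right]=\left[2t-1\atop t\right]$ inside the $2t$-dimensional space $F$. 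Applying the $n=2k$ equality part of Theorem~\ref{EKR} (with $k=t$), $N_{t}(F)$ is either a star $\left\{T\in\left[F\atop t\right]:E\leq T\right\}$ for some $E\in\left[F\atop 1\right]$, or a pencil $\left[Y\atop t\right]$ for some hyperplane $Y\in\left[F\atop 2t-1\right]$; the same holds for $N_{t}(F')$.

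The star case is immediately impossible: if $N_{t}(F)$ were the star with centre $E$, then $\left[I\atop t\right]\subseteq N_{t}(F)$ would force $E\leq T$ for every $t$-subspace $T\leq I$, which cannot happen once $\dim I\geq t+1$ (the recurrence $\left[d\atop t\right]=\left[d-1\atop t-1\right]+q^{t}\left[d-1\atop t\right]$ shows that $I$ has $t$-subspaces avoiding $E$). Hence $N_{t}(F)=\left[Y\atop t\right]$ with $I\leq Y$, and likewise $N_{t}(F')=\left[Y'\atop t\right]$ with $I\leq Y'$. From this I extract the key \emph{trace principle}: if $G\in\mathcal F\setminus\{F\}$ satisfies $\dim(F\cap G)\geq t$, then $F\cap G\leq Y$, since otherwise $F\cap G$ would contain a $t$-subspace $T\nleq Y$, which is private to $F$ because $N_{t}(F)=\left[Y\atop t\right]$, contradicting $T\leq F\cap G\leq G$.

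It then remains to turn this rigid pencil structure into a forbidden cover. Writing $F=I\oplus A$ with $\dim A=2t-\dim I\leq t-1$, Lemma~\ref{r1} applied to $F=I+A$ (with $|\mathcal F_{I}|\geq 2$) shows that every complement of $I$ in $F$ is private; consequently no member can meet $F$ in a subspace spanning $F$ together with $I$, so $F$ can never be covered through $F'$ alone. The decisive configuration is therefore the extreme one $\dim I=2t-1$, where $I=Y=Y'$ is a common hyperplane of $F$ and $F'$ and $F/I$ is $1$-dimensional: here a single member $H\neq F$ with $F\cap H\nleq I$ already yields $F=I+(F\cap H)=(F\cap F')+(F\cap H)$, a cover forbidden by the definition of $\mathcal F$. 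Thus the entire difficulty concentrates in guaranteeing such a transversal member $H$ — equivalently, in ruling out that $F$ privately owns all of the $q^{2t-1}$ one-dimensional subspaces in $F\setminus Y$, and in first forcing a second member through the hyperplane $Y$ so as to reach the $\dim I=2t-1$ case. I expect this to be the main obstacle, and I would attack it by a global count: playing the identity $\sum_{G\neq F}\left[\dim(F\cap G)\atop t\right]=\left[2t-1\atop t\right]\!\left(\frac{q^{n-t}-1}{q^{t}-1}-1\right)$, which follows from Lemma~\ref{l1}(i) and $|N_{t}(F)|=\left[2t-1\atop t\right]$, against the exact totals $|\Omega|=q^{t}\left[n-1\atop t\right]$ and $|\Gamma|=\left[n-1\atop t-1\right]$ of Lemma~\ref{l1}(iv); a large summand $\left[\dim(F\cap F')\atop t\right]\geq\left[t+1\atop t\right]$ forces the number of members meeting $F$ in dimension exactly $t$ to drop below what the trace principle and these totals permit, producing the transversal member and hence the cover.
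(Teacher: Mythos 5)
Your proposal does not close. After you establish the pencil structure $N_{t}(F)=\left[Y\atop t\right]$ and the trace principle, everything that remains is only announced, not proved: you still need (a) to force a second member meeting $F$ in a $(2t-1)$-dimensional subspace so as to reach your ``decisive configuration'' $\dim I=2t-1$, and (b) to produce a transversal member $H$ with $F\cap H\nleq Y$. Neither step is carried out. Moreover, the counting identity you propose, $\sum_{G\neq F}\left[\dim(F\cap G)\atop t\right]=\left[2t-1\atop t\right]\left(\frac{q^{n-t}-1}{q^{t}-1}-1\right)$, while correct, says nothing about how the total is distributed among intersection dimensions, and — worse — by your own trace principle any $H$ with $F\cap H\nleq Y$ satisfies $\dim(F\cap H)\leq t-1$, so it contributes $\left[\dim(F\cap H)\atop t\right]=0$ to that sum. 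The transversal member you are hunting for is exactly the kind of member your count cannot detect. So the argument has a genuine gap at its final, decisive step.

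The missing idea is much simpler and is the one the paper uses: Lemma~\ref{l1}(ii) says $|\mathcal{F}_{B}|\in\left\{1,\frac{q^{n-t}-1}{q^{t}-1}\right\}$ for \emph{every} $B\in\left[V\atop t\right]$, so in particular no $t$-dimensional subspace of $V$ is left uncovered by $\mathcal{F}$. Now take $A$ with $A\oplus(F\cap F')=F$, so $\dim A\leq t-1$; Lemma~\ref{r1}, applied with $|\mathcal{F}_{F\cap F'}|\geq 2$, makes $A$ private to $F$. Since $n>2t$, extend $A$ to some $B\in\left[V\atop t\right]$ with $B\nleq F$. Then $\mathcal{F}_{B}\subseteq\mathcal{F}_{A}=\{F\}$, so $|\mathcal{F}_{B}|\leq 1$, whence $|\mathcal{F}_{B}|=1$ and $\mathcal{F}_{B}=\{F\}$ — contradicting $B\nleq F$. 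Your write-up assembles the heavy machinery (EKR stars versus pencils, a global count) but never invokes the one consequence of Lemma~\ref{l1}(ii) — total coverage of $t$-spaces — that kills the configuration in three lines; I would rebuild the proof around that fact rather than trying to repair the counting argument.
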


 \begin{proof}~
If there exist $F, F'\in \mathcal{F}$ with $F\neq F'$ such that $\rm{dim}$$(F\cap F')> t$, since $F\cap F'\leq F, F'$, we have that $A$ is a private subspace of $F$ by Lemma~\ref{r1}, where $A\oplus (F\cap F')=F$ and $\rm{dim}$$(A)\leq t-1$. We can choose $B\in \left[V\atop t\right]\big\backslash \left[F\atop t\right]$ such that $A\leq B$ and $|\mathcal{F}_{B}|\leq 1$. By Lemma~\ref{l1}(ii), we have $|\mathcal{F}_{B}|=1$. Then $\mathcal{F}_{B}=\{F\}$ by $A\leq B$ and $A\leq F\in \mathcal{F}$. It is a contradiction to $B\in \left[V\atop t\right]\big\backslash \left[F\atop t\right]$.
 \end{proof}

When $t=1$, we are ready to prove the case of equality in Theorem~\ref{vsc}(i).
\begin{pro}\label{p1}
Suppose $\mathcal{F}\subseteq \left[V\atop 2\right]$ is a cover-free  family. If $|\mathcal{F}|=\left[n-1\atop 1\right]$, then
\begin{equation*}
\mathcal{F}=\{E\oplus S: S\in \mathcal{S}\}
\end{equation*}
for some $E\in \left[V\atop 1\right]$, where $\mathcal{S}$ is a $q$-Steiner system $S_{q}(1, 1, n-1)$ on some $W\in \left[V\atop n-1\right]$ satisfying $E\oplus W=V$.
\end{pro}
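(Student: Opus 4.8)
The plan is to specialize the structural machinery already assembled in Lemma~\ref{l1} to the base case $t=1$, where the relevant quantities collapse and pin down the geometry almost immediately. First I would read off Lemma~\ref{l1}(iv) with $t=1$: it gives $|\Gamma|=\left[n-1\atop t-1\right]=\left[n-1\atop 0\right]=1$. One may confirm this crucial collapse directly, since $|\Omega|=q^{t}\left[2t-1\atop t\right]|\mathcal{F}|=q\left[n-1\atop 1\right]$ forces $|\Gamma|=\left[n\atop 1\right]-q\left[n-1\atop 1\right]=\frac{(q^{n}-1)-(q^{n}-q)}{q-1}=1$. Thus the set $\Gamma$ defined in~\eqref{e45} is a single line, say $\Gamma=\{E\}$ with $E\in\left[V\atop 1\right]$.

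Next I would invoke the defining property of $\Gamma$ at $t=1$: every $A\in\Gamma$ satisfies $|\mathcal{F}_{A}|=\frac{q^{n-1}-1}{q-1}=\left[n-1\atop 1\right]=|\mathcal{F}|$. Applying this to $E$ yields $|\mathcal{F}_{E}|=|\mathcal{F}|$, and since $\mathcal{F}_{E}\subseteq\mathcal{F}$, equality of cardinalities forces $\mathcal{F}_{E}=\mathcal{F}$. In other words, the fixed line $E$ lies in \emph{every} member of $\mathcal{F}$, so $\mathcal{F}$ is a subfamily of the star $\{F\in\left[V\atop 2\right]:E\leq F\}$.

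To finish, I would count the star. The $2$-dimensional subspaces of $V$ containing the fixed line $E$ correspond bijectively to the lines of the quotient $V/E$, so there are exactly $\left[n-1\atop 1\right]$ of them. Since $\mathcal{F}$ is a subfamily of this star of cardinality $\left[n-1\atop 1\right]=|\mathcal{F}|$, it must be the entire star. I would then pick any $W\in\left[V\atop n-1\right]$ with $E\oplus W=V$ (for instance $W=\psi(E)$ from Lemma~\ref{lem}); by Lemma~\ref{vv}(i) each $F\in\mathcal{F}$ meets $W$ in a unique line $S=F\cap W$ with $F=E\oplus S$, while conversely each $S\in\left[W\atop 1\right]$ produces the member $E\oplus S$ of the star. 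Hence $\mathcal{F}=\{E\oplus S:S\in\mathcal{S}\}$ with $\mathcal{S}=\left[W\atop 1\right]$, which is trivially the $q$-Steiner system $S_{q}(1,1,n-1)$ on $W$ (each line of $W$ is contained in exactly one member, namely itself).

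I do not expect any genuine obstacle here: the heavy lifting has already been done in deriving Lemma~\ref{l1}, and for $t=1$ the count $|\Gamma|=1$ makes the center $E$ unique, after which the argument is pure enumeration together with Lemma~\ref{vv}(i). The real difficulty of Theorem~\ref{vsc}(i) is reserved for the case $t\geq 2$, where $\Gamma$ is large and one must deploy the shadow estimate of Lemma~\ref{l900} to recover the $q$-Steiner structure.
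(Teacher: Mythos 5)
Your proof is correct and follows essentially the same route as the paper: both arguments use Lemma~\ref{l1} (you cite part (iv) to get $|\Gamma|=1$, the paper cites part (i) to get $|N_{1}(F)|=1$ together with part (ii)) to produce a line $E$ with $|\mathcal{F}_{E}|=\left[n-1\atop 1\right]=|\mathcal{F}|$, conclude that $\mathcal{F}$ is the full star through $E$, and then read off the $S_{q}(1,1,n-1)$ structure on a complement $W$. The only difference is cosmetic, namely which part of Lemma~\ref{l1} is invoked first.
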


\begin{proof}~
For any $F\in \mathcal{F}$, there exists exactly one $E\in \left[F\atop 1\right]$ such that $|\mathcal{F}_{E}|=\frac{q^{n-1}-1}{q-1}=|\mathcal{F}|$ by Lemma~\ref{l1}(i). Hence,
\begin{equation*}
\mathcal{F}_{E}=\left\{F\in \left[V\atop 2\right]: E\leq F\right\}
\end{equation*}
and
\begin{equation*}
\mathcal{F}=\left\{E\oplus S: S\in \left[W\atop 1\right]\right\}
\end{equation*}
where $E\oplus W=V$. It is obvious that $\left[W\atop 1\right]$ is a $q$-Steiner system $S_{q}(1, 1, n-1)$.
\end{proof}

In the rest of this section we consider the case $t\geq 2$. For any $B\in \left[V\atop t-1\right]$, we fix a subspace $M\in \left[V\atop n-t+1\right]$ such that $V=B\oplus M$. Define
\begin{equation*}
\Omega(B)=\left\{E\in \left[M\atop 1\right]:E\oplus B\in\Omega \right\},
\end{equation*}
\begin{equation*}
\Gamma(B)=\left\{E\in \left[M\atop 1\right]:E\oplus B\in\Gamma \right\},
\end{equation*}
\begin{equation*}
b_{i}=\left|\left\{F\in \mathcal{F}:B\leq F, \left|\left[F\atop 1\right]\bigcap \Gamma(B)\right|=i \right\}\right|,
\end{equation*}
where $\Omega$ and $\Gamma$ are defined in \eqref{45} and \eqref{e45}.
It is obvious that
\begin{equation}\label{eeq}
\left|\Omega(B)\right|+\left|\Gamma(B)\right|=\left[n-t+1\atop 1\right].
\end{equation}
If $b_{i}\neq 0$, then $0\leq i\leq \left[t+1\atop 1\right]$, because $\rm{dim}$$(F\cap M)=t+1$ for any $F\in\mathcal{F}_{B}$.

\begin{lem}\label{l2}
For any $B\in \left[V\atop t-1\right]$, we have $|\Gamma(B)|\in\left\{0, 1, \left[n-t+1\atop 1\right]\right\}$.
\end{lem}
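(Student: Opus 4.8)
The plan is to push everything into the quotient geometry on $M\cong V/B$, combine the Erd\H{o}s--Ko--Rado dichotomy for the links $N_t(F)$ with a spread/covering argument, and thereby reduce the trichotomy to a single statement about one subspace. First I would record the local picture. Via Lemma~\ref{vv} each $F\in\mathcal F_B$ splits as $F=B\oplus(F\cap M)$ with $\bar F:=F\cap M\in\left[M\atop t+1\right]$, and the $t$-subspaces of $F$ containing $B$ correspond bijectively to the points $\left[\bar F\atop 1\right]$ via $A\mapsto A\cap M$. By Lemma~\ref{l1}(ii) each such $t$-subspace is private or non-private, so $\left[M\atop 1\right]=\Omega(B)\uplus\Gamma(B)$, and $i(F):=|\left[\bar F\atop 1\right]\cap\Gamma(B)|$ is exactly the number of non-private $t$-subspaces of $F$ through $B$. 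Since $N_t(F)$ is an intersecting family in the $2t$-space $F$ of the maximum size $\left[2t-1\atop t-1\right]$ (Lemmas~\ref{s} and~\ref{l1}(i)), Theorem~\ref{EKR} in the case $n=2k$ forces $N_t(F)$ to be a star $\{A:E_F\le A\}$ or a hyperplane family $\left[Y_F\atop t\right]$. Intersecting with the pencil through $B$ then shows that $\bar F\cap\Gamma(B)$ is empty, a single point, a hyperplane of $\bar F$, or all of $\bar F$; equivalently $i(F)\in\{0,1,\left[t\atop 1\right],\left[t+1\atop 1\right]\}$, which is precisely the partition of $\mathcal F_B$ measured by the $b_i$.

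Next I would set up the incidence engine. By Corollary~\ref{c1} any two members sharing a non-private $t$-subspace meet in exactly that subspace, so every point of $\Gamma(B)$ lies on exactly $r:=\frac{q^{n-t}-1}{q^t-1}$ of the $\bar F$ and every point of $\Omega(B)$ on exactly one. Fixing $P\in\Gamma(B)$ and passing to $M/P$, the $r$ members through $P$ become $t$-subspaces meeting pairwise only in $0$; since $r\left[t\atop 1\right]=\left[n-t\atop 1\right]$ they form a spread of $M/P$, whence the members through $P$ cover every point of $M$ (overlapping only at $P$). This gives the covering property: for any $P\in\Gamma(B)$ and any point $R$, some $\bar F$ contains both.

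The covering property yields line-closure: if $P\ne P'\in\Gamma(B)$, a member $\bar F$ contains both, so $i(F)\ge 2$ and $\bar F\cap\Gamma(B)$ is a subspace of $\bar F$ containing the line $P+P'$. Hence $\Gamma(B)$ is the point set of a subspace $U\le M$ and $|\Gamma(B)|=\left[\dim U\atop 1\right]$, so it remains to show $\dim U\in\{0,1,n-t+1\}$. Because $\bar F\cap U$ is a subspace with $\left[\dim(\bar F\cap U)\atop 1\right]=i(F)\in\{0,1,\left[t\atop 1\right],\left[t+1\atop 1\right]\}$, monotonicity of the Gaussian coefficient forces $\dim(\bar F\cap U)\in\{0,1,t,t+1\}$ for every member. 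If $2\le\dim U\le t-1$ this is immediately contradictory: picking two points of $U$ and a member $\bar F$ through both gives $2\le\dim(\bar F\cap U)\le\dim U\le t-1$, a forbidden value. Thus only the range $t\le\dim U\le n-t$ survives.

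The hard part will be exactly this surviving range. Running the spread argument at a point of $U$ shows such a $U$ forces a hyperplane-type member $F_0$ whose $Y_{F_0}$ is a $(2t-1)$-space $W_0$ all of whose $t$-subspaces are non-private; the whole difficulty collapses to contradicting the existence of such a $W_0$. Here Corollary~\ref{c1} says no two members can both contain the $(2t-1)$-space $W_0$, while each of its $\left[2t-1\atop t\right]$ $t$-subspaces is non-private of multiplicity $r$ with its members meeting pairwise only in it; my approach would be to over-count the members this spawns against the exact cardinality $|\mathcal F|=\frac{\left[n-1\atop t\right]}{\left[2t-1\atop t\right]}$ (treating $\dim U>t$ the same way, since it again produces such a $W_0$). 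I expect this global cardinality balance, and not any local incidence relation among the $b_i$ — which the analysis above shows is satisfiable in isolation — to be the genuine obstacle, and it is the step I would spend the most care on.
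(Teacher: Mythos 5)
Your reduction is set up correctly as far as it goes: the quotient picture on $M$, the fact (via Theorem~\ref{EKR} applied to $N_t(F)$, which Lemmas~\ref{s} and~\ref{l1}(i) make a maximum intersecting family in $F$) that $\left[\bar F\atop 1\right]\cap\Gamma(B)$ is always the point set of a subspace of dimension $0,1,t$ or $t+1$, the spread structure at each point of $\Gamma(B)$ coming from Corollary~\ref{c1} (this is exactly the paper's equation~\eqref{m}), and the consequent line-closure showing $\Gamma(B)=\left[U\atop 1\right]$ for a subspace $U\leq M$ with $\dim U\notin\{2,\dots,t-1\}$. But the lemma's entire content is the exclusion of $t\leq\dim U\leq n-t$, and that step is not carried out: you describe a plan (reduce to a hyperplane-type member $F_0$ with $N_t(F_0)=\left[Y_{F_0}\atop t\right]$ and then over-count against $|\mathcal F|$) and yourself flag it as the genuine obstacle. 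Two concrete problems with the plan. First, the reduction is not justified: a member $\bar F$ meeting $U$ in dimension $\geq 2$ could just as well have $i(F)=\left[t+1\atop 1\right]$, which forces a \emph{star} with centre inside $B$, not a hyperplane-type member, so the surviving range does not obviously "collapse to contradicting the existence of $W_0$." Second, even granting the reduction, the paper only eliminates hyperplane-type $N_t(F)$ much later (Lemma~\ref{cc}), and that argument runs through Lemma~\ref{ff}, which in turn cites the present lemma — so you cannot borrow that machinery without circularity, and a simple divisibility count of $|\mathcal F_B|$ does not suffice (e.g.\ for $\dim U=t$ one gets $|\mathcal F_B|=\bigl(q^{n-t+1}+q^{n-t}-q^{t}-1\bigr)/\bigl(q^{t+1}-1\bigr)$, which is an integer for some parameters).

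For comparison, the paper's proof needs none of the EKR classification at this stage. It counts the quantities $b_i$ three times — incidences with $\Gamma(B)$, incidences with $\Omega(B)$, and pairs of points of $\Gamma(B)$ in a common member (the last using exactly your spread observation to get $|\mathcal F_{E\oplus E'\oplus B}|=1$) — and combines them into the single identity
\begin{equation*}
\sum_{i}\bigl(|\Gamma(B)|-i\bigr)\Bigl(\left[t+1\atop 1\right]-i\Bigr)b_{i}=0,
\end{equation*}
from which the trichotomy falls out by a short sign analysis using $n\geq 2t+1$. If you want to salvage your geometric route, the fastest repair is to feed your subspace structure back into this kind of count: with $\Gamma(B)=\left[U\atop 1\right]$ and $t\leq\dim U\leq n-t$ you have $2\leq|\Gamma(B)|<\left[n-t+1\atop 1\right]$, and the paper's Case~2 computation (equations \eqref{eq21} and \eqref{eq14}) already shows any value of $|\Gamma(B)|$ strictly between $1$ and $\left[n-t+1\atop 1\right]$ forces $n=2t$, a contradiction. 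As written, however, the proposal does not prove the lemma.
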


\begin{proof}~
For any $B\in \left[V\atop t-1\right]$, we calculate
\begin{equation*}
\left|\left\{(E, F):E\in\left[F\atop 1\right]\bigcap \Gamma(B), B\leq F\in\mathcal{F}\right\}\right|
\end{equation*}
in three ways and get the following equation
\begin{equation}\label{eq12}
\sum\limits_{i=0}^{\left[t+1\atop 1\right]}ib_{i}=\sum\limits_{F\in\mathcal{F}_{B}}\left|\left[F\atop 1\right]\bigcap \Gamma(B)\right|=\sum\limits_{E\in \Gamma(B)}\left|\mathcal{F}_{E\oplus B}\right|=\frac{q^{n-t}-1}{q^{t}-1}\left|\Gamma(B)\right|,
\end{equation}
where the last equality holds by Lemma~\ref{l1}(ii) as $E\oplus B$ is not private.

Similarly, since $\rm{dim}$$(F\cap M)=t+1$ holds for any $F\in\mathcal{F}_{B}$, we have
\begin{equation}\label{eq13}
\begin{array}{rl}
\sum\limits_{i=0}^{\left[t+1\atop 1\right]}\left(\left[t+1\atop 1\right]-i\right)b_{i}
=&\!\!\!\sum\limits_{F\in\mathcal{F}_{B}}\left|\left[F\atop 1\right]\bigcap \left(\left[M\atop 1\right]\big\backslash\Gamma(B)\right)\right|\\[.3cm]
=&\!\!\!\sum\limits_{F\in\mathcal{F}_{B}}\left|\left[F\atop 1\right]\bigcap \Omega(B)\right|\\[.3cm]
=&\!\!\!\sum\limits_{E\in \Omega(B)}\left|\mathcal{F}_{E\oplus B}\right|\\[.3cm]
=&\!\!\!\left|\Omega(B)\right|\\[.3cm]
\overset{\eqref{eeq}}{=}\!\!&\!\!\!\left[n-t+1\atop 1\right]-\left|\Gamma(B)\right|.
\end{array}
\end{equation}

We will prove that $\left|\mathcal{F}_{E\oplus E'\oplus B}\right|=1$ for any two distinct $E, E'\in \Gamma(B)\subseteq \left[M\atop 1\right]$. For any $E\in \Gamma(B)$, we have $E\oplus B\in \Gamma$, so $\left|\mathcal{F}_{E\oplus B}\right|=\frac{q^{n-t}-1}{q^{t}-1}$ by~\eqref{e45}. For any $F, F'\in \mathcal{F}_{E\oplus B}$, if $F\neq F'$, then $F\cap F'=E\oplus B$ by Corollary~\ref{c1}. Thus $F\cap F'\cap M=E$. Since
\begin{equation*}
\left|\bigcup\limits_{F\in \mathcal{F}_{E\oplus B}}\left[F\cap M\atop 1\right]\right|=\frac{q^{n-t}-1}{q^{t}-1}\left(\left[t+1\atop 1\right]-1\right)+1=\left[n-t+1\atop 1\right]=\left|\left[M\atop 1\right]\right|
\end{equation*}
we have
\begin{equation}\label{m}
\left[M\atop 1\right]=\biguplus\limits_{F\in \mathcal{F}_{E\oplus B}}\left(\left[F\cap M\atop 1\right]\big\backslash\{E\}\right)\bigcup \{E\}.
\end{equation}
Then for every $E'\in \Gamma(B)\backslash \{E\}\subseteq \left[M\atop 1\right]\backslash \{E\}$, $\left|\mathcal{F}_{E\oplus E'\oplus B}\right|=1$ by Corollary~\ref{c1}. Hence,
\begin{equation}\label{eq14}
\begin{array}{rl}
\sum\limits_{i=0}^{\left[t+1\atop 1\right]}\binom{i}{2}b_{i}
&=\sum\limits_{F\in\mathcal{F}_{B}}\binom{\left|\left[F\atop 1\right]\bigcap \Gamma(B)\right|}{2}\\[.3cm]
&=\sum\limits_{E, E'\in \Gamma(B)\atop E\neq E'}\left|\mathcal{F}_{E\oplus E'\oplus B}\right|\\[.3cm]
&=\binom{\left|\Gamma(B)\right|}{2}.
\end{array}
\end{equation}
If we subtract the double of~\eqref{eq14} from~\eqref{eq12} multiplied by $q\left[t\atop 1\right]$, we obtain
\begin{equation}\label{eq19}
\sum\limits_{i=0}^{\left[t+1\atop 1\right]}i\left(\left[t+1\atop 1\right]-i\right)b_{i}
=\left|\Gamma(B)\right|\left(\left[n-t+1\atop 1\right]-\left|\Gamma(B)\right|\right).
\end{equation}
Then
\begin{equation*}
\sum\limits_{i=0}^{\left[t+1\atop 1\right]}i\left(\left[t+1\atop 1\right]-i\right)b_{i}
=\left|\Gamma(B)\right|\sum\limits_{i=0}^{\left[t+1\atop 1\right]}\left(\left[t+1\atop 1\right]-i\right)b_{i}
\end{equation*}
by~\eqref{eq13} and~\eqref{eq19}.
Thus,
\begin{equation}\label{eq20}
\sum\limits_{i=0}^{\left[t+1\atop 1\right]}\left(\left|\Gamma(B)\right|-i\right)\left(\left[t+1\atop 1\right]-i\right)b_{i}
=0.
\end{equation}

Case\ 1$:$~Suppose $\left|\Gamma(B)\right|\geq\left[t+1\atop 1\right]$. For $0\leq i\leq\left[t+1\atop 1\right]$, we have
\begin{equation}\label{eq22}
\left(\left|\Gamma(B)\right|-i\right)\left(\left[t+1\atop 1\right]-i\right)b_{i}=0
\end{equation}
by~\eqref{eq20}. So we have $b_{i}=0$ by~\eqref{eq22} for $0\leq i\leq\left[t+1\atop 1\right]-1$. Thus
\begin{equation*}
\left|\Gamma(B)\right|=\left[n-t+1\atop 1\right]
\end{equation*}
by~\eqref{eq13}.

Case\ 2$:$~Suppose $\left|\Gamma(B)\right|<\left[t+1\atop 1\right]$.  Actually, $b_{i}=0$ for $i>\left|\Gamma(B)\right|$. Then for $0\leq i\leq\left|\Gamma(B)\right|$,~\eqref{eq22} holds by~\eqref{eq20}. For $0\leq i\leq\left|\Gamma(B)\right|-1$, $b_{i}=0$ by~\eqref{eq22}. Thus
\begin{equation}\label{eq21}
\left(\left[t+1\atop 1\right]-\left|\Gamma(B)\right|\right)b_{\left|\Gamma(B)\right|}=\left[n-t+1\atop 1\right]-\left|\Gamma(B)\right|
\end{equation}
by~\eqref{eq13}. If $\left|\Gamma(B)\right|\geq 2$, then $b_{\left|\Gamma(B)\right|}=1$ by~\eqref{eq14}, hence $\left[t+1\atop 1\right]=\left[n-t+1\atop 1\right]$ by~\eqref{eq21}, which leads to a contradiction $n=2t$. Thus $\left|\Gamma(B)\right|=0$ or $1$ in this case.
\end{proof}

\begin{lem}\label{ff}
For any $B\in \bigtriangleup(\Omega)$, we have
\begin{equation*}
\left|\mathcal{F}_{B}\right|\leq\frac{q^{n-t}-1}{q^{t}-1}.
\end{equation*}
Moreover, equality holds if and only if $|\Gamma(B)|=1$.
\end{lem}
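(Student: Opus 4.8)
The plan is to combine Lemma~\ref{l2} with the counting identities~\eqref{eq12} and~\eqref{eq13} derived in its proof, and to evaluate $|\mathcal{F}_{B}|=\sum_{i}b_{i}$ exactly in each admissible case for $|\Gamma(B)|$, rather than bounding it by an inequality.

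First I would record that $B\in\bigtriangleup(\Omega)$ forces $\Omega(B)\neq\emptyset$. If $B\leq A$ for some $A\in\Omega\subseteq\left[V\atop t\right]$, then Lemma~\ref{vv}(i) applied to the decomposition $V=B\oplus M$ gives $A=A\cap(B\oplus M)=B\oplus(A\cap M)$, so $A\cap M$ is a line $E_{0}\in\left[M\atop 1\right]$ with $E_{0}\oplus B=A\in\Omega$, i.e.\ $E_{0}\in\Omega(B)$. By~\eqref{eeq} this gives $|\Gamma(B)|<\left[n-t+1\atop 1\right]$, and Lemma~\ref{l2} then leaves only the possibilities $|\Gamma(B)|\in\{0,1\}$.

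Next I would settle the two cases. When $|\Gamma(B)|=0$, identity~\eqref{eq12} forces $\sum_{i}ib_{i}=0$, hence $b_{i}=0$ for $i\geq 1$ and $|\mathcal{F}_{B}|=b_{0}$; then~\eqref{eq13} reads $\left[t+1\atop 1\right]b_{0}=\left[n-t+1\atop 1\right]$, so $|\mathcal{F}_{B}|=\left[n-t+1\atop 1\right]\big/\left[t+1\atop 1\right]$. When $|\Gamma(B)|=1$, every $F\in\mathcal{F}_{B}$ has $i\in\{0,1\}$, so only $b_{0},b_{1}$ occur;~\eqref{eq12} gives $b_{1}=\frac{q^{n-t}-1}{q^{t}-1}$, and feeding this into~\eqref{eq13} together with the identities $\left[t+1\atop 1\right]-1=q\left[t\atop 1\right]$ and $\left[n-t+1\atop 1\right]-1=q\left[n-t\atop 1\right]$ makes the right-hand side collapse to $\left[t+1\atop 1\right]b_{0}=0$; thus $b_{0}=0$ and $|\mathcal{F}_{B}|=b_{1}=\frac{q^{n-t}-1}{q^{t}-1}$.

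Finally I would compare the two outputs with the target bound. The case $|\Gamma(B)|=1$ is already exactly $\frac{q^{n-t}-1}{q^{t}-1}$, giving equality. For $|\Gamma(B)|=0$, cancelling the common $(q-1)$ factor turns the desired inequality $\left[n-t+1\atop 1\right]\big/\left[t+1\atop 1\right]<\frac{q^{n-t}-1}{q^{t}-1}$ into $(q^{n-t+1}-1)(q^{t}-1)<(q^{n-t}-1)(q^{t+1}-1)$, whose difference equals $(q-1)(q^{t}-q^{n-t})<0$ because $n\geq 2t+1$ forces $n-t>t$; this yields the strict inequality and shows the bound is attained only for $|\Gamma(B)|=1$. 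I expect the one genuinely delicate point to be the exact vanishing $b_{0}=0$ in the equality case: it is not an estimate but an algebraic identity forced by the two counting relations, so the whole ``moreover'' clause rests on checking that the Gaussian-coefficient cancellation is clean. Everything else is routine bookkeeping with the $b_{i}$.
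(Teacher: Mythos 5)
Your proposal is correct, and it reaches the two exact values of $|\mathcal{F}_{B}|$ by a genuinely different route from the paper. Both arguments begin identically: $B\in\bigtriangleup(\Omega)$ gives $|\Omega(B)|\geq 1$, so \eqref{eeq} and Lemma~\ref{l2} leave only $|\Gamma(B)|\in\{0,1\}$. From there the paper argues structurally: in the case $|\Gamma(B)|=1$ it uses the partition \eqref{m} of $\left[M\atop 1\right]$ and privateness to show every $F\in\mathcal{F}_{B}$ must contain the distinguished line $E$ (i.e.\ $\mathcal{F}_{B}=\mathcal{F}_{E\oplus B}$), and in the case $|\Gamma(B)|=0$ it invokes Corollary~\ref{c1} to show the traces $F\cap M$, $F\in\mathcal{F}_{B}$, partition $\left[M\atop 1\right]$, giving $|\mathcal{F}_{B}|=\left[n-t+1\atop 1\right]\big/\left[t+1\atop 1\right]$. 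You instead recycle the two counting identities \eqref{eq12} and \eqref{eq13} (which are established in the proof of Lemma~\ref{l2} for an arbitrary $B\in\left[V\atop t-1\right]$, resting only on Lemma~\ref{l1}(ii)) and solve them for the $b_{i}$: since $i\leq|\Gamma(B)|$ only $b_{0},b_{1}$ survive, \eqref{eq12} pins down $b_{1}$, and the cancellation $\left(\left[t+1\atop 1\right]-1\right)\frac{q^{n-t}-1}{q^{t}-1}=q\left[n-t\atop 1\right]=\left[n-t+1\atop 1\right]-1$ forces $b_{0}=0$ in the equality case. This is shorter, purely arithmetic, and avoids a second appeal to Corollary~\ref{c1}; your explicit verification that $\left[n-t+1\atop 1\right]\big/\left[t+1\atop 1\right]<\frac{q^{n-t}-1}{q^{t}-1}$ (equivalent to $(q-1)(q^{t}-q^{n-t})<0$, valid since $n\geq 2t+1$) even fills in a strict inequality the paper asserts without comment. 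What the paper's longer argument buys is the structural description $\mathcal{F}_{B}=\mathcal{F}_{E\oplus B}$ in the equality case, but that extra information is not needed downstream, so your proof suffices for the uses of Lemma~\ref{ff} in Lemma~\ref{cc} and beyond.
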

\begin{proof}~
Suppose that $B\in \bigtriangleup(\Omega)$, $M\in \left[V\atop n-t+1\right]$ and $V=B\oplus M$. By~\eqref{eeq}, Lemma~\ref{l2} and $|\Omega(B)|\geq 1$, we have $|\Gamma(B)|\leq 1$.

Case\ 1$:$~Suppose $|\Gamma(B)|=1$. Then there exists exactly one $E \in \left[M\atop 1\right]$ such that $B\oplus E \in \Gamma$, so
\begin{equation}\label{q12}
\left[M\atop 1\right]=\{E\}\biguplus \Omega(B)
\end{equation}
and
\begin{equation}\label{q13}
\Omega(B)=\biguplus\limits_{F\in \mathcal{F}_{E\oplus B}}\left(\left[F\cap M\atop 1\right]\big\backslash\{E\}\right)
\end{equation}
by~\eqref{m} and~\eqref{q12}. We have $\mathcal{F}_{B}=\mathcal{F}_{E\oplus B}$; otherwise, there exists $F_{1}\in \mathcal{F}_{B}\backslash \mathcal{F}_{E\oplus B}$ and then $\left[F_{1}\cap M\atop 1\right]\subseteq \Omega(B)$. For any fixed $E'\in \left[F_{1}\cap M\atop 1\right]$, there exists $F_{2}\in \mathcal{F}_{E\oplus B}$ such that $E'\leq F_{2}$ by~\eqref{q13}. Consequently, $E'\oplus B\leq F_{1}\cap F_{2}$, which is a
contradiction to $|\mathcal{F}_{E'\oplus B}|=1$ as $E'\in\Omega(B)$. Hence
\begin{equation*}
\left|\mathcal{F}_{B}\right|=\left|\mathcal{F}_{B\oplus E}\right|=\frac{q^{n-t}-1}{q^{t}-1}.
\end{equation*}

Case\ 2$:$~Suppose $|\Gamma(B)|=0$. Then for any $E \in \left[M\atop 1\right]$, $B\oplus E \in \Omega$. Since for any $E \in \left[M\atop 1\right]=\Omega(B)$, there exists $F\in \mathcal{F}_{B}$ such that $E\in \left[F\cap M\atop 1\right]$. Then
\begin{equation}\label{clm}
\left[M\atop 1\right]=\bigcup\limits_{F\in \mathcal{F}_{B}}\left[F\cap M\atop 1\right].
\end{equation}
We claim that for any two distinct $F, F'\in \mathcal{F}_{B}$, $\left[F\cap M\atop 1\right]\bigcap \left[F'\cap M\atop 1\right]=\emptyset$. By Corollary~\ref{c1}, ${\dim}(F\cap F')\leq  t$. If ${\dim}(F\cap F')=t$, then ${\dim}(F\cap F'\cap M)=1$ as $B\leq F\cap F'$ and $B\cap M=\{\textbf{0}\}$. Thus $B\oplus(F\cap F'\cap M)\leq F\cap F'$, which is a contradiction to the premise. Then $F\cap F'=B$. Thus the claim holds by $(F\cap M)\cap (F'\cap M)=B\cap M=\{\textup{\textbf{0}}\}$. Hence,
\begin{equation*}
\left[M\atop 1\right]=\biguplus\limits_{F\in \mathcal{F}_{B}}\left[F\cap M\atop 1\right]
\end{equation*}
by~\eqref{clm} and the above claim. Then
\begin{equation*}
\left|\mathcal{F}_{B}\right|=\frac{\left[n-t+1\atop 1\right]}{\left[t+1\atop 1\right]}<\frac{q^{n-t}-1}{q^{t}-1}.
\end{equation*}
as ${\dim}(M)=n-t+1$ and ${\dim}(F\cap M)=t+1$ for any $F\in \mathcal{F}_{B}$.
\end{proof}

\begin{lem}\label{cc}
We have
\begin{equation}\label{eq33}
\left|\bigtriangleup(\Omega)\right|\geq q^{t-1}\left[n-1\atop t-1\right].
\end{equation}
Moreover, if the equality holds, then $N_{t}(F)=\left\{H\in\left[F\atop t\right]:E\leq H\right\}$ for any $F\in\mathcal{F}$ and some $E\in\left[F\atop 1\right]$.
\end{lem}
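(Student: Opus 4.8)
The plan is to bound $\left|\bigtriangleup(\Omega)\right|$ by a refined double-counting argument whose key input is the shadow theorem (Theorem~\ref{k+1}). First I would classify each $B\in\left[V\atop t-1\right]$ by the value of $|\Gamma(B)|$: by Lemma~\ref{l2} this lies in $\{0,1,\left[n-t+1\atop 1\right]\}$, and by~\eqref{eeq} we have $|\Omega(B)|=\left[n-t+1\atop 1\right]-|\Gamma(B)|$, so $B\in\bigtriangleup(\Omega)$ precisely when $|\Gamma(B)|\in\{0,1\}$. Writing $a,b,c$ for the numbers of $B$ with $|\Gamma(B)|$ equal to $0,1,\left[n-t+1\atop 1\right]$ respectively, so $\left|\bigtriangleup(\Omega)\right|=a+b$, I would record two exact incidence counts. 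Counting pairs $(B,A)$ with $B\leq A\in\Omega$ gives $a\left[n-t+1\atop 1\right]+b\,q\left[n-t\atop 1\right]=|\Omega|\left[t\atop 1\right]=q^{t}\left[n-1\atop t\right]\left[t\atop 1\right]$, and counting pairs $(B,A)$ with $B\leq A\in\Gamma$ gives $b+c\left[n-t+1\atop 1\right]=|\Gamma|\left[t\atop 1\right]=\left[n-1\atop t-1\right]\left[t\atop 1\right]$, both using Lemma~\ref{l1}(iv).

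A short manipulation of the first identity (using $q\left[n-t\atop 1\right]=\left[n-t+1\atop 1\right]-1$) yields $(a+b)\left[n-t+1\atop 1\right]=q^{t}\left[n-1\atop t\right]\left[t\atop 1\right]+b$. A routine Gaussian-coefficient computation then shows that the target bound $\left|\bigtriangleup(\Omega)\right|\geq q^{t-1}\left[n-1\atop t-1\right]$ is \emph{equivalent} to the single inequality $b\geq q^{t-1}\left[n-1\atop t-1\right]$; and via the second identity $b=\left[n-1\atop t-1\right]\left[t\atop 1\right]-c\left[n-t+1\atop 1\right]$ this is in turn equivalent to $c\leq\left[n-1\atop t-2\right]$. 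Thus everything reduces to bounding the number $c$ of $(t-1)$-subspaces all of whose $t$-dimensional overspaces lie in $\Gamma$.

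The main obstacle is proving $c\leq\left[n-1\atop t-2\right]$, since the crude incidence bound on $c$ overshoots by a factor $\frac{q^{t}-1}{q^{t-1}-1}$ and is useless. I would obtain the sharp bound by dualizing with a fixed nondegenerate bilinear form on $V$: the map $A\mapsto A^{\perp}$ is an inclusion-reversing bijection $\left[V\atop j\right]\to\left[V\atop n-j\right]$, under which ``every $t$-overspace of $B$ lies in $\Gamma$'' becomes ``every $(n-t)$-subspace of $B^{\perp}$ lies in $\Gamma^{\perp}$'', i.e. $B^{\perp}\in\chi(\Gamma^{\perp})$ where $\Gamma^{\perp}=\{A^{\perp}:A\in\Gamma\}\subseteq\left[V\atop n-t\right]$. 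Hence $c=\left|\chi(\Gamma^{\perp})\right|$. Since $|\Gamma^{\perp}|=|\Gamma|=\left[n-1\atop t-1\right]=\left[n-1\atop n-t\right]$, Theorem~\ref{k+1} applied with $m=n-1$ gives $c=\left|\chi(\Gamma^{\perp})\right|\leq\left[n-1\atop n-t+1\right]=\left[n-1\atop t-2\right]$, exactly the bound needed, which completes the inequality~\eqref{eq33}.

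Finally, for the equality case I would trace the chain backwards. Equality in~\eqref{eq33} forces $c=\left[n-1\atop t-2\right]$, i.e. equality in Theorem~\ref{k+1} for $\chi(\Gamma^{\perp})$, so $\Gamma^{\perp}=\left[M'\atop n-t\right]$ for some $M'\in\left[V\atop n-1\right]$. Dualizing back gives $\Gamma=\{A\in\left[V\atop t\right]:E\leq A\}$ with $E=(M')^{\perp}\in\left[V\atop 1\right]$. For every $F\in\mathcal{F}$ we have $N_{t}(F)=\left[F\atop t\right]\cap\Gamma$ and $|N_{t}(F)|=\left[2t-1\atop t-1\right]>0$ by Lemma~\ref{l1}(i); picking any $T\in N_{t}(F)$ yields $E\leq T\leq F$, so $E\leq F$, and therefore $N_{t}(F)=\{H\in\left[F\atop t\right]:E\leq H\}$, as claimed. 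I expect the only delicate point beyond the $c$-bound to be bookkeeping the equivalence ``equality $\iff c=\left[n-1\atop t-2\right]$'' cleanly, and noting that the resulting $E$ is in fact common to all $F\in\mathcal{F}$, which is slightly stronger than the stated conclusion.
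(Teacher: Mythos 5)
Your proof is correct, but it takes a genuinely different route from the paper's. The paper argues locally on each $F\in\mathcal{F}$: from $|N_{t}(F)|=\left[2t-1\atop t-1\right]$ and the intersecting property (Lemma~\ref{s}), the equality part of the Erd\H{o}s--Ko--Rado theorem (Theorem~\ref{EKR}) forces $N_{t}(F)$ to be a star or $\left[Y\atop t\right]$ with $Y\in\left[F\atop 2t-1\right]$; computing $|\bigtriangleup(M_{t}(F))|$ in each case gives $|\bigtriangleup(M_{t}(F))|\geq q^{t-1}\left[2t-1\atop t-1\right]$ with equality only for stars, and a double count of pairs $(B,F)$ with $B\in\bigtriangleup(M_{t}(F))$, controlled by Lemma~\ref{ff}'s bound $|\mathcal{F}_{B}|\leq\frac{q^{n-t}-1}{q^{t}-1}$, yields \eqref{eq33}. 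You instead work globally with $\Gamma$: the trichotomy of Lemma~\ref{l2} plus two exact incidence counts reduce \eqref{eq33} to the single bound $c\leq\left[n-1\atop t-2\right]$ on the number of $(t-1)$-spaces all of whose $t$-dimensional overspaces lie in $\Gamma$, which you obtain by applying Theorem~\ref{k+1} to the orthogonal complement $\Gamma^{\perp}\subseteq\left[V\atop n-t\right]$ (using $|\Gamma|=\left[n-1\atop t-1\right]=\left[n-1\atop n-t\right]$ from Lemma~\ref{l1}(iv)); the algebraic reductions, the identification $c=|\chi(\Gamma^{\perp})|$, and the equivalence of equality with $c=\left[n-1\atop t-2\right]$ all check out. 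Your approach bypasses Lemma~\ref{s}, Theorem~\ref{EKR} and Lemma~\ref{ff} entirely, at the price of introducing the duality map, which the paper never uses. Moreover, your equality analysis delivers more than the stated lemma: equality in Theorem~\ref{k+1} makes $\Gamma$ itself a star $\{A\in\left[V\atop t\right]:E\leq A\}$ with a single global centre $E$, whence $E\leq F$ for every $F\in\mathcal{F}$ follows at once because $N_{t}(F)=\left[F\atop t\right]\cap\Gamma$ is nonempty. This is strictly stronger than the per-$F$ conclusion the paper extracts here, and it would shortcut a substantial portion of the subsequent argument in Section~3.2, where the machinery of $\mathcal{F}(E)$, $\Omega_{t}(E)$ and Lemma~\ref{l900} exists precisely to prove that the centres $E$ coincide across all $F$.
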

\begin{proof}~
Since $|\mathcal{F}|=\frac{\left[n-1\atop t\right]}{\left[2t-1\atop t\right]}$, for any $F\in\mathcal{F}$, we have
\begin{equation*}
\left|M_{t}(F)\right|=\left|\left\{T\in \left[F\atop t\right]: |\mathcal{F}_{T}|=1\right\}\right|= q^{t}\left[2t-1\atop t\right],
\end{equation*}
\begin{equation*}
\left|N_{t}(F)\right|=\left|\left\{T\in\left[F\atop t\right]:|\mathcal{F}_{T}|>1\right\}\right|=\left[2t-1\atop t-1\right]
\end{equation*}
by Lemma~\ref{l1}(i). By Lemma~\ref{s}, $N_{t}(F)\subseteq \left[F\atop t\right]$ is an intersecting family. Then we have
\begin{equation*}
N_{t}(F)=\left\{H\in\left[F\atop t\right]:E\leq H\right\}
\end{equation*}
for some $E\in\left[F\atop 1\right]$ or
\begin{equation*}
N_{t}(F)=\left[Y\atop t\right]
\end{equation*}
for some $Y\in\left[F\atop 2t-1\right]$ by Theorem~\ref{EKR}. Then we calculate the size of $\bigtriangleup(M_{t}(F))$ for any $F\in\mathcal{F}$ in the following two cases.

Case\ 1$:$~Suppose $N_{t}(F)=\left\{H\in\left[F\atop t\right]:E\leq H\right\}$ for some $E\in\left[F\atop 1\right]$. Then
\begin{equation*}
M_{t}(F)=\left\{H\in\left[F\atop t\right]:E\nleq H\right\}
\end{equation*}
and
\begin{equation*}
\bigtriangleup(M_{t}(F))=\left\{G\in\left[F\atop t-1\right]:E\nleq G\right\}.
\end{equation*}
By Lemma~\ref{lemc21},
\begin{equation*}
\left|\bigtriangleup(M_{t}(F))\right|= q^{t-1}\left[2t-1\atop t-1\right].
\end{equation*}

Case\ 2$:$~Suppose $N_{t}(F)\!=\!\left[Y\atop t\right]$ for some $Y\!\in\!\left[F\atop 2t-1\right]$. Then there exists $E'\in \left[F\atop 1\right]$ such that $E'\oplus Y=F$. We claim that
\begin{equation}\label{ea}
M_{t}(F)=\!\left\{H\in\left[F\atop t\right]:H\nleq Y\right\}=\bigcup\limits_{T\in \left[Y\atop t\right]}\left(\left[T\oplus E'\atop t\right]\big\backslash\{T\}\right).
\end{equation}
For any $H\in M_{t}(F)$, since ${\dim}(H\cap Y)=t-1$, we have $H=\langle y_{1}, y_{2}, \ldots y_{t-1}, y_{t}+e\rangle$, where $y_{i}\in Y$, $e\in E'$, $y_{1}, y_{2}, \ldots y_{t-1}, y_{t}$ are linearly independent and $e\neq \textup{\textbf{0}}$. Then $H\in \bigcup\limits_{T\in \left[Y\atop t\right]}\left(\left[T+E'\atop t\right]\big\backslash\{T\}\right)$. It is obvious that $\bigcup\limits_{T\in \left[Y\atop t\right]}\left(\left[T\oplus E'\atop t\right]\big\backslash\{T\}\right)\subseteq M_{t}(F)$.

By~\eqref{ea}, for any $G\in \left[Y\atop t-1\right]$, we have
\begin{equation*}
G\oplus E'\in \bigcup\limits_{T\in \left[Y\atop t\right]}\left(\left[T\oplus E'\atop t\right]\big\backslash\{T\}\right)=M_{t}(F),
\end{equation*}
that is, $G\in \bigtriangleup(M_{t}(F))$. Then $\left[Y\atop t-1\right]\subseteq \bigtriangleup(M_{t}(F))$. Since $\left[F\atop t\right]\big\backslash \left[Y\atop t\right]\subseteq M_{t}(F)$, we also have $\left[F\atop t-1\right]\big\backslash \left[Y\atop t-1\right]\subseteq \bigtriangleup(M_{t}(F))$. Then
\begin{equation*}
\left|\bigtriangleup(M_{t}(F))\right|=\left|\left[F\atop t-1\right]\right|=\left[2t\atop t-1\right]> q^{t-1}\left[2t-1\atop t-1\right]
\end{equation*}
and hence
\begin{equation}\label{43}
\left|\bigtriangleup(M_{t}(F))\right|\geq q^{t-1}\left[2t-1\atop t-1\right].
\end{equation}
Note that the equality occurs only in Case\ 1.

For any $B\in \bigtriangleup(\Omega)$, we have $|\mathcal{F}_{B}|\leq \frac{q^{n-t}-1}{q^{t}-1}$ by Lemma~\ref{ff}. Hence, we calculate
\begin{equation*}
\left|\left\{(B, F):B\in \bigtriangleup(\Omega), B\leq F\in \mathcal{F}\right\}\right|
\end{equation*}
in two ways and get the following inequality
\begin{equation*}
\begin{array}{rl}
\frac{q^{n-t}-1}{q^{t}-1}\left|\bigtriangleup(\Omega)\right|\!\!\!&\geq|\mathcal{F}_{B}|\cdot\left|\bigtriangleup(\Omega)\right|=\left|\bigtriangleup(M_{t}(F))\right|\cdot \left|\mathcal{F}\right|\\[.3cm]
&\geq q^{t-1}\left[2t-1\atop t-1\right]\cdot \frac{\left[n-1\atop t\right]}{\left[2t-1\atop t\right]}\\[.3cm]
&=q^{t-1}\left[n-1\atop t\right]
\end{array}
\end{equation*}
by Lemma~\ref{l1}(iii) and~\eqref{43}. Then we have
\begin{equation*}
\left|\bigtriangleup(\Omega)\right|\geq q^{t-1}\left[n-1\atop t-1\right].
\end{equation*}
If the equality holds, then $$\left|\bigtriangleup(M_{t}(F))\right|=q^{t-1}\left[2t-1\atop t-1\right],\ |\mathcal{F}_{B}|=\frac{q^{n-t}-1}{q^{t}-1}$$ for any $F\in \mathcal{F}$ and $B\in \bigtriangleup(\Omega)$. Hence, we only have Case 1  for all $F\in \mathcal{F}$, completing the proof.
(Note that for any $B\in \bigtriangleup(\Omega)$, $|\mathcal{F}_{B}|=\frac{q^{n-t}-1}{q^{t}-1}$ in Case\ 1. The reason is as follows. For any fixed $B\in \bigtriangleup(\Omega)$, there exists $F\in \mathcal{F}$ such that $B\in \bigtriangleup(M_{t}(F))$ by Lemma~\ref{l1}(iii). Then $B\oplus E\in N_{t}(F)$, where $E$ is defined in Case\ 1. Let $M\in \left[V\atop n-t+1\right]$ such that $V=B\oplus M$. Since ${\rm \dim}((B\oplus E)\cap M)=1$, we have $$B\oplus(B\oplus E)\cap M=B\oplus E\in N_{t}(F),$$ that is, $(B\oplus E)\cap M\in\Gamma(B)$. Then $|\Gamma(B)|=1$ by Lemma \ref{l2} as $B\in \bigtriangleup(\Omega)$. Hence, $|\mathcal{F}_{B}|=\frac{q^{n-t}-1}{q^{t}-1}$ by Lemma~\ref{ff}.)
\end{proof}

\noindent\emph{{\textbf{Proof of Theorem~{\rm\ref{vsc}(i)} for equality.}}}
If $t=1$, by Proposition~\ref{p1}, the result holds. If $t\geq 2$, by Lemma~\ref{cc}, we infer
\begin{equation*}
\left|\left\{B\in \left[V\atop t-1\right]:|\Omega(B)|\geq 1\right\}\right|\geq q^{t-1}\left[n-1\atop t-1\right],
\end{equation*}
equivalently,
\begin{equation}\label{eq34}
x=:\left|\left\{B\in \left[V\atop t-1\right]:|\Gamma(B)|\leq 1\right\}\right|\geq q^{t-1}\left[n-1\atop t-1\right]
\end{equation}
by~\eqref{eeq} and Lemma~\ref{l2}. Using \eqref{eq34} and Lemma~\ref{l1}(iv), we deduce
\begin{equation*}
\begin{array}{rl}
\left[t\atop t-1\right]\left[n-1\atop t-1\right]
=\!\!&\left\{(B, T):B\leq T\in \Gamma, B\in \left[V\atop t-1\right]\right\}\\[.3cm]
=\!\!&\sum\limits_{B\in \left[V\atop t-1\right]}|\Gamma(B)|\\[.3cm]
\leq\!\!&x\cdot 1+\left(\left[n\atop t-1\right]-x\right)\cdot \left[n-t+1\atop 1\right]\\[.3cm]
=\!\!&\left[n\atop t-1\right]\cdot\left[n-t+1\atop 1\right]-\left(\left[n-t+1\atop 1\right]-1\right)x\\[.3cm]
\overset{\eqref{eq34}}{\leq}\!\!\!\!&\left[n-t+1\atop 1\right]\cdot \left(\left[n\atop t-1\right]-q^{t-1}\left[n-1\atop t-1\right]\right)+q^{t-1}\left[n-1\atop t-1\right]\\[.3cm]
=\!\!&\left[t\atop t-1\right]\left[n-1\atop t-1\right].
\end{array}
\end{equation*}
Thus we must have equalities in \eqref{eq34} and consequently in \eqref{eq33}, that is,
\begin{equation}\label{ee}
\left|\bigtriangleup(\Omega)\right|=q^{t-1}\left[n-1\atop t-1\right].
\end{equation}
 %And
%\begin{equation*}
%\left|\left\{B\in \left[V\atop t-1\right]:|\Gamma(B)|=1\right\}\right|= q^{t-1}\left[n-1\atop t-1\right].
%\end{equation*}
Hence, for any $F\in \mathcal{F}$,
\begin{equation*}
N_{t}(F)=\left\{H\in\left[F\atop t\right]:E\leq H\right\}
\end{equation*}
for some $E\in\left[F\atop 1\right]$ by Lemma~\ref{cc}.
%=\left\{B\oplus E:B\in \left[F\atop t-1\right]\right\}
Then by Lemma~\ref{ca1}(ii),
\begin{equation}\label{e203}
M_{t}(F)=\left\{H\in\left[F\atop t\right]:E\nleq H\right\}=\biguplus\limits_{A\in\left[G\atop t\right]}\left[A\oplus E\atop t,0\right]_{E},
\end{equation}
where $G\in \left[F\atop 2t-1\right]$ and $G\oplus E=F$. %Then for any $H\in M_{t}(F)$, there exists $A\in\left[G\atop t\right]$ such that $H\in \left[A\oplus E\atop t,0\right]_{E}$. By Remark~\ref{r100},
%=\bigcup\limits_{B\in M_{t}(F)}\left[B\oplus E\atop t,0\right]_{E}
%\begin{equation}\label{e203}
%\left[H\oplus E\atop t,0\right]_{E}=\left[A\oplus E\atop t,0\right]_{E}\subseteq M_{t}(F).
%\end{equation}

For $E\in \left[V\atop 1\right]$, define
\begin{equation*}
\mathcal{F}(E)=\left\{F\in \mathcal{F}:E\leq H \text{~for~any~} H\in N_{t}(F)\right\}.
\end{equation*}
Denote
\begin{equation*}
\mathcal{E}=\left\{E\in \left[V\atop 1\right]:\mathcal{F}(E)\neq \emptyset\right\}.
\end{equation*}
For $E\in \mathcal{E}$, denote
\begin{equation}\label{e100}
\Omega_{t}(E)=\biguplus\limits_{F\in \mathcal{F}(E)} M_{t}(F)=\bigcup\limits_{F\in \mathcal{F}(E)}\left\{H\in \left[F\atop t\right]:E\nleq H\right\},
\end{equation}
\begin{equation}\label{e1000}
\Gamma_{t}(E)=\bigcup\limits_{F\in \mathcal{F}(E)} N_{t}(F)=\bigcup\limits_{F\in \mathcal{F}(E)}\left\{H\in \left[F\atop t\right]:E\leq H\right\}.
\end{equation}
It is obvious that $\Omega=\biguplus\limits_{E\in \mathcal{E}}\Omega_{t}(E)$. Next, we will prove  $\bigtriangleup(\Omega)=\biguplus\limits_{E\in \mathcal{E}}\bigtriangleup(\Omega_{t}(E))$.

We claim that $\bigtriangleup(\Omega_{t}(E))\cap\bigtriangleup(\Omega_{t}(E'))=\emptyset$ for any $E, E'\in \Omega_{t}(E)$ with $E\neq E'$. Suppose $P\in \bigtriangleup(\Omega_{t}(E))\cap\bigtriangleup(\Omega_{t}(E'))$. Then $|\Omega(P)|\geq 1$ and $|\Gamma(P)|\leq1$ by~\eqref{eeq} and Lemma~\ref{l2}. By~\eqref{e100}, $E\nleq H$ for any $H\in \Omega_{t}(E)$, $E'\nleq H'$ for any $H'\in \Omega_{t}(E')$, so $E, E' \nleq P$. Then $P\oplus E \in \Gamma_{t}(E)\subseteq \Gamma$, $P\oplus E' \in \Gamma_{t}(E')\subseteq \Gamma$ by~\eqref{e1000}, that is, $E, E'\in \Gamma(P)$, which is a contradiction to $|\Gamma(P)|\leq 1$.

Suppose $|\Omega_{t}(E)|=q^{t}\left\lbrack m_{\!_E}\atop t\right\rbrack$ for $E\in \mathcal{E}$, where $m_{\!_E}\in \mathbb{R}$. Since for any $H\in \Omega_{t}(E)$, we have $H\in M_{t}(F)$ for some $F\in \mathcal{F}(E)$  by~\eqref{e100}. Thus $$\left[H\oplus E\atop t,0\right]_{E}=\left[A\oplus E\atop t,0\right]_{E}\subseteq M_{t}(F) \subseteq \Omega_{t}(E)$$ for some $A\in\left[G\atop t\right]$ by~\eqref{e203}, Remark~\ref{r100} and~\eqref{e100}, where $G\in \left[F\atop 2t-1\right]$ and $G\oplus E=F$. Then
\begin{equation}\label{eq50}
\left|\bigtriangleup(\Omega_{t}(E))\right|\geq q^{t-1}\left[m_{\!_E}\atop t-1\right]
\end{equation}
by Lemma~\ref{l900}.

By Lemma~\ref{l1}(iv), we have $q^{t}\left[n-1\atop t\right]=\left|\Omega\right|=\sum\limits_{E\in \mathcal{E}}\left|\Omega_{t}(E)\right|=q^{t}\sum\limits_{E\in \mathcal{E}}\left[m_{\!_E}\atop t\right]$. Then $m_{\!_E}\leq n-1$ for all $E\in \mathcal{E}$. Hence
%\begin{equation*}
%\begin{array}{rl}
\begin{align}
q^{t-1}\left[n-1\atop t-1\right]
\overset{\eqref{ee}}{=}\!\!&\left|\bigtriangleup(\Omega)\right|=\left|\biguplus\limits_{E\in \mathcal{E}}\bigtriangleup(\Omega_{t}(E))\right|\notag\\
=&\sum\limits_{E\in \mathcal{E}}\left|\bigtriangleup(\Omega_{t}(E))\right|\notag\\
\overset{\eqref{eq50}}{\geq}\!\!&\sum\limits_{E\in \mathcal{E}}q^{t-1}\left[m_{\!_E}\atop t-1\right]\label{a1}\\
=&q^{t-1}\sum\limits_{E\in \mathcal{E}}\frac{q^{t}-1}{q^{m_{\!_E}-t+1}-1}\left[m_{\!_E}\atop t\right]\notag\\
\geq&\frac{q^{t-1}(q^{t}-1)}{q^{n-t}-1}\sum\limits_{E\in \mathcal{E}}\left[m_{\!_E}\atop t\right]\label{a2}\\
=&\frac{q^{t-1}(q^{t}-1)}{q^{n-t}-1}\left[n-1\atop t\right]\notag\\
=&q^{t-1}\left[n-1\atop t-1\right]\notag.
\end{align}
%\end{array}
%\end{equation*}
This forces that the inequalities \eqref{a1} and \eqref{a2} become equalities. Furthermore, the equality in \eqref{a2} requires that $\mathcal{E}=\{E\}$ and $m_{\!_E}= n-1$. This gives that $\Omega=\Omega_{t}(E)$, $\left|\Omega\right|=\left|\Omega_{t}(E)\right|=q^{t}\left[n-1\atop t\right]$ and the equality holds in~\eqref{a1}. Then $$E\in\bigcap\limits_{F\in \mathcal{F}}N_{t}(F),$$ that is, $E\leq F$ for every $F\in \mathcal{F}$. For $W\in \left[V\atop n-1\right]$ with $E\oplus W=V$, set
\begin{equation*}
\mathcal{S}=\left\{F\cap W: F\in \mathcal{F}\right\}.
\end{equation*}
Then $|\mathcal{S}|=|\mathcal{F}|=\frac{\left[n-1\atop t\right]}{\left[2t-1\atop t\right]}$. For any two distinct $S, S' \in \mathcal{S}$, we have ${\dim}(S\cap S') \leq t-1$ by Corollary~\ref{c1}. Thus $\mathcal{S}$ is a $q$-Steiner system $S_{q}(t, 2t-1, n-1)$ on $W$.
\qed
\medskip

\noindent\emph{{\textbf{Proof of Theorem~{\rm\ref{vsc}}.}}}
Since the subsections 3.1 and 3.2 have treated part (i) of the theorem, we are done by Remark~\ref{rm}.
\qed

\section{ Concluding remarks }
In the present paper, we investigated cover-free families of finite vector spaces. To establish an upper bound of the cardinality of a cover-free family, we first simplified the problem to the case of a cover-free family of
subspaces with even dimension. Then we defined a family $\mathcal{A}(F, T)$ of $t$-dimensional subspaces with special properties and made use of a weight function to get a tight upper bound of the cover-free family $\mathcal{F}\subseteq \left[V\atop 2t\right]$. Finally, we succeeded in characterizing the structures of all maximum cover-free families, which are intimately connected to $q$-Steiner systems. Our main result is Theorem~\ref{vsc}, which is a generalization of Theorem~\ref{sc} from sets to vector spaces.

It is well-known that a $q$-Steiner system is a type of constant-dimension subspace code with the largest possible size. The present paper disclosed an intimate relationship between $q$-Steiner systems and maximum cover-free families of vector spaces. We think that it is worthwhile to explore possible applications of cover-free families of vector spaces in the field of subspace codes. On the other hand, since the knowledge of $q$-Steiner systems is very limited, the investigation on cover-free families of vector spaces with smaller cardinalities is also interesting.

%\section*{Acknowledgements}

%The authors are grateful for the careful reading of two anonymous referees, who not only spotted some minor errors in the original version, but also gave helpful suggestions to improve the manuscript.

\end{document}